\def\RR{\mathbb{R}}
\def\R{\mathbb{R}}
\def\argmin{\text{argmin}}
\def\R{\mathbb{R}}
\def\sym{\text{sym}}
\def\t{\text}
\def\arc{\text{arc}}
\def\trace{\text{Trace}}
\newtheorem{theorem}{Theorem}[section]
\newtheorem{proposition}[theorem]{Proposition}
\newtheorem{corollary}[theorem]{Corollary}
\newtheorem{remark}[theorem]{Remark}
\newtheorem{example}[theorem]{Example}
\begin{document}

\title[Differential Geometry  for Model Independent Analysis]{Differential Geometry  for Model Independent Analysis of Images and Other Non-Euclidean Data: Recent Developments}

\author{Rabi Bhattacharya and Lizhen Lin}

\dedicatory{ In celebration of Chuck's 70th Birthday}

\address{Department of Mathematics, The University of Arizona, Tucson, AZ}
\email{rabi@math.arizona.edu}

\address{Department of Applied and Computational Mathematics and Statistics, The University of Notre Dame, Notre Dame, IN}
\email{lizhen.lin@nd.edu}

\begin{abstract}

 This article provides an exposition of recent methodologies for nonparametric analysis of digital observations on images and other non-Euclidean objects. Fr\'echet means of distributions on metric spaces, such as manifolds and stratified spaces, have played an  important role in this endeavor. Apart from theoretical issues of uniqueness of the Fr\'echet  minimizer and the asymptotic distribution of the sample Fr\'echet  mean under uniqueness, applications to image analysis are highlighted.  In addition, nonparametric Bayes theory is brought to bear on the problems of density estimation and classification on manifolds.

\end{abstract}

\maketitle

\section{Introduction}

 Historically, directional statistics, that is, statistics on spheres, especially $S^2$, has been around for a long time, and there is a great deal of literature on it (See the books by \cite{watson83}, \cite{maridajpuu}, \cite{fisher87}). Much of that was inspired by a seminal paper by \cite{fisherra53} proving beyond any reasonable doubt that the earth's magnetic poles had shifted over geological times. Indeed, the two sets of data that he analyzed, one from the Quaternary period and the other from recent times (1947-48), showed an almost reversal of the directions of the magnetic poles. In addition to this first scientific demonstration of a phenomenon conjectured by some paleontologists, such studies of magnetic poles in fossilized remanent magnetism had an enormous impact on tectonics, essentially validating the theory of continental drift (\cite{Irvingbook}, \cite{fisher87}). There are other important applications of directional statistics, such as designing of windmills based on wind directions,  etc. Fisher's example is presented in Section \ref{sec-ex}, in comparison with the nonparametric method highlighted in this article.

The advancement of imaging technology and increase in computing prowess have opened up a whole new vista of applications. Medical imaging, for example, is now an essential component of medical practice. Not only have MRIs (magnetic resonance imaging) become routine for diagnosing a plethora of diseases, there are more advanced techniques such as the DTI (diffusion tensor imaging) which measures diffusion coefficients of water molecules in tiny voxels along nerve fibers in the cortex of the brain in order to understand or monitor diseases such as Parkinson's and Alzheimer's \citep{dtigood,dtikind,dtimor}. Beyond medicine, there are numerous applications to morphometrics \citep{bf},  graphics, robotics, and machine vision \citep{1333732,ma2005invitation,1524983}.

Images are geometric objects and their precise mathematical descriptions and identifications in different fields of applications are facilitated by the use of differential geometry.  \cite{kendall} and \cite{bf} were two pioneers in the geometric description and statistical analysis of images represented by landmarks on two or three dimensional objects. The spaces of such images, or shapes, are differential manifolds, or \emph{stratified spaces} obtained by gluing together manifolds of different dimensions.  In the following sections these spaces are described in detail. Much of the earlier statistical analysis on differential manifolds were parametric in nature, where a distribution $Q$ on a manifold $M$ is assumed to belong to a finite dimensional parametric family; that is, $Q$ is assumed to have a density (with respect a standard distribution, e.g., the volume measure on $M$) which is specified except for the value of a finite dimensional parameter $\theta$ lying in an open subset $\Theta$ of an Euclidean space.  The statistician'ÂÂs task is then to estimate the parameter (or test for its belonging to a particular subset of $\Theta$), using observed data. There are standard methodologies for estimation (say, the maximum likelihood estimator, MLE), or testing (such as the likelihood ratio test) that one may try to use. Of course, it still requires a great deal of effort to analytically compute these statistical indices and their (approximate) distributions on specific manifolds. A reasonably comprehensive account of these for the shape spaces of Kendall, or similar manifolds, may be found in \cite{dimk}.

The focus of the present article is a model independent, or nonparametric, methodology for inference on general manifolds. As a motivation consider the problem of discriminating between two distributions on an Euclidean space based on independent samples from them. In parametric inference one would use a density (with respect to a sigma-finite measure) which is specified except for a finite dimensional parameter as described above.  One may use one of a number of standard asymptotically efficient procedures to test if the two distributions have different parameter values (See, e.g., \cite{hotelling1931},\cite{goodall}). If the statistician is not confident about this parametric model, or any other, one popular method is to test for the differences between the means of the two distributions by using the two sample means.  When the sample sizes are reasonably large then the difference between the sample means is asymptotically normal with mean given by the difference between the population means. If the observations are from a normal distribution with the mean as the unknown parameter then this test is optimal in an appropriate sense (\cite{textbook}, pp 296-300, \cite{lehmann1959testing}, pp. 93,94).  But used in other parametric model the test is not, in general, optimal and may even be inconsistent; that is, there may be many pairs of distributions $Q_1\neq Q_2$ whose means are the same. However, when the components or coordinates of the distributions are such that the differences between $Q_1$ and $Q_2$ are reasonably expected to manifest in shifts of the mean vector, this widely used nonparametric test is quite effective, especially since  with large sample sizes the asymptotic distribution is normal. Turning now to distributions $Q$ on non-Euclidean metric spaces $S$, one has an analogue of the mean given by the minimizer, if unique, of the average (with respect to $Q$) of the squared distance from a point. This is the so called \emph{Fr\'echet mean} introduced by \cite{frech}, although physicists probably had used the notion earlier in specific physical contexts for the distribution $Q$ of  the mass of a body, calling it the center of mass.  Of course it is in general a non-trivial matter to find out broad conditions for the \emph{uniqueness of the Fr\'echet minimizer} and, in the case of uniqueness, to derive the \emph{(asymptotic) distribution of the sample Fr\'echet mean}. These allow one to obtain proper confidence regions for the Fr\'echet  mean of $Q$ and critical regions for tests for detecting differences in means of distributions on $M$ \citep{rabivic02, rabi03, rabi05}.  The theory of Fr\'echet means is presented in Section 2 (uniqueness and consistency), and in Section 4 (asymptotic distributions). The main results in Sections 2 and 4 are presented with complete proofs. Section 4 plays a central role for inference in the present context, and it contains some improvements of earlier results.

It has been shown in data examples that the nonparametric procedures based on Fr\'echet means often greatly outperform their parametric counterparts (See \cite{rabibook}).  Misspecification of the model is a serious issue with parametric inference, especially for distributions on rather complex non-Euclidean spaces.

In this article two types of images and their analysis are distinguished.  The greater emphasis is on landmarks based shapes introduced by \cite{kendall} and \cite{bf}.  This looks at a \emph{$k$-ad} or a set of $k$ properly chosen points, not all the same, on an $m$-dimensional image (usually $m=2$ or 3), $k>m$, such as an MRI scan of a section of the brain for purposes of diagnosing a disease, or a scan of some organ of a species for purposes of morphometrics. In order to properly compare images taken from different distances and angles using perhaps different machines, the \emph{shape of a $k$-ad} is defined modulo translation, scaling and rotation. The resulting shapes comprise \emph{Kendall's shape spaces.} In addition, one may consider \emph{affine shapes} which are invariant under all affine transformations appropriate in scene recognition; similarly, \emph{projective shapes} invariant under projective transformations are often used for robotic vision. The precise mathematical (geometric) descriptions of these kind of images are presented in Section 3. Sections 5 and 6 provide the asymptotic theory of tests and confidence regions on manifolds, based on the asymptotic distribution theory developed in Section 4. 

Section 8 considers briefly the second type of images, namely, the actual geometric shape of a compact two-dimensional surface or a three dimensional body. Here the shape space is infinite dimensional and may be viewed as a \emph{Hilbert manifold} \citep{Ellingson2013317}.  For purposes of diagnostics such as described above, this is probably not to be preferred in comparison with the finite dimensional landmarks based shapes considered by Kendall, because of the curse of dimensionality.  The Hilbert manifolds then are better suited for purposes of machine vision.  However, for that task a more effective methodology seems to be one which builds on the exciting inquiry of \cite{kac66}: Can one hear the shape of a drum? It turns out that for two-dimensional compact Riemannian manifolds such as compact surfaces, the spectrum of the Laplace Beltrami operator identifies the manifold in most cases, although there are exceptions. In three and higher dimensions, on the other hand, iso-spectral manifolds are not so rare \citep{Milnor01041964, Gordon1992, Zelditch2000}. Still, computer scientists and other researchers in machine vision have successfully implemented algorithms to identify two and three-dimensional images by the spectrum of their Laplaceans, sometimes augmented by their eigen-functions \citep{demmel09, Gotsman2003, Jain2007, shamir06, Reuter2009}. A mathematical breakthrough was achieved by \cite{Jones08}, who proved that indeed compact manifolds are determined by this augmentation.

 Section 7 is devoted to another very important statistical problem: \emph{nonparametric classification via density estimation, and nonparametric regression} on manifolds. In particular, we emphasize Ferguson's nonparametric Bayes theory of using \emph{Dirichlet process priors} for this endeavor \citep{ferg73, ferg74}.
 
 Sections \ref{sec-ex} provides a number of applications of the theory of Fr\'echet means, including Fisher's example mentioned above, but focusing on two-sample problems on landmarks based shape spaces such as those introduced by  Kendall  \citep{kendall,kendall89}.

The appendix, Section 10, provides a ready access to some notions in Riemannian geometry used in the text. 

\section{Existence of the Fr\'echet Mean on Non-Euclidean Spaces. }

Let $(S, \rho)$ be a metric space and $Q$ a probability measure on it.  The \emph{Fr\'echet function} of $Q$
 is defined as
 \begin{align}
 \label{eq-frefunc}
F(p) = \int \rho^2(p,q)Q(dq),   p \in S.
 \end{align}
If $F$ is finite at some $p$ then it is finite on $S$.  The set $C(Q)$ of minimizers of F is called the Fr\'echet  mean set. If the minimizer is unique, i.e., $C(Q)$ is a singleton, then it is called the \emph{Fr\'echet  mean} of $Q$, and one says that the \emph{Fr\'echet mean} of $Q$ exists. We will often use the topological condition
     \begin{align}
 \label{eq-compact}
 \text{\emph{ All closed bounded subsets of S are compact.    }}
 \end{align}
When $S$ is a Riemannian manifold and $\rho=\rho_g$ is the geodesic distance on it, then \eqref{eq-compact} is equivalent to the completeness of $S$, by the Hopf-Rinow theorem (\cite{docarmo}, pp. 146-149).

Let $X_1,\ldots,X_n $ be a random sample from $Q$, i.e., $X_j$ are i.i.d. with common distribution $Q$, defined on a probability space $(\Omega,\mathcal F, P)$. Denote by $F_n$ the Fr\'echet  function of the empirical $Q_n =(1/n) \sum_{1\leq j\leq n} \delta_{X_j}$, where $\delta_x$ is the point mass at $x.$ Also let $B^{\epsilon} =\{p\in S: \rho(p,B) < \epsilon\}$ for $B\subset S.$

\begin{theorem}[\citep{rabi03}]
\label{th-2.1}
  Assume \eqref{eq-compact} and that the Fr\'echet function $F$ of $Q$ is finite. Then (a) $C(Q)$ is nonempty and compact, and (b) for each $\epsilon>0$, there exists a random positive integer $N= N(\omega;\epsilon)$ and a $ P$-null set $\Gamma$ such that $\forall n\geq N(\omega;\epsilon)$,
                                   \begin{align}
                                   \label{eq-2.3}
                                   C(Q_n) \subset (C(Q))^{\epsilon} \;\text{for every}\; \omega\not\in \Gamma.
                                   \end{align}
(c) In particular, if the Fr\'echet mean of $Q$, say $\mu$, exists, then every measurable selection $\mu_n$ from $C(Q_n)$, converges almost surely to $\mu$. In this case $\mu_n$ is called the sample Fr\'echet mean.
\end{theorem}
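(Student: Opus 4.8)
The plan is to prove (a) first, then derive (b) as the main technical result, with (c) as an easy consequence. For part (a), I would first show that $F$ is continuous on $S$: using the triangle inequality $|\rho(p,q) - \rho(p',q)| \le \rho(p,p')$, one gets $|\rho^2(p,q) - \rho^2(p',q)| \le \rho(p,p')(\rho(p,q) + \rho(p',q))$, and integrating against $Q$ (together with finiteness of $F$, which gives $\int \rho(p,q)Q(dq) < \infty$) yields local Lipschitz-type control, hence continuity. Next, I would show the sublevel sets of $F$ are bounded: fix $p_0$ with $F(p_0) =: c < \infty$; for $p$ far from $p_0$, pick $R$ with $Q(\overline{B(p_0,R)}) \ge 3/4$, and for $q$ in that ball, $\rho(p,q) \ge \rho(p,p_0) - R$, so $F(p) \ge \tfrac34(\rho(p,p_0)-R)^2 \to \infty$. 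Thus $\{F \le c\}$ is closed (continuity) and bounded, hence compact by \eqref{eq-compact}, so $F$ attains its infimum on it; therefore $C(Q)$ is nonempty, and $C(Q) = \{F \le \inf F\}$ is a closed subset of a compact set, hence compact.

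For part (b) I would argue by contradiction using a uniform strong law of large numbers. Suppose \eqref{eq-2.3} fails for some $\epsilon > 0$ on a set of positive probability: then along a subsequence there are $\omega$ and points $p_n \in C(Q_n)$ with $\rho(p_n, C(Q)) \ge \epsilon$. The key is to control $F_n$ uniformly on a large compact set. First, by the SLLN, $F_n(p) \to F(p)$ a.s.\ for each fixed $p$, and in particular the minimum value $\min F_n = F_n(p_n) \le F_n(p_0) \to F(p_0) = c$ for any fixed $p_0 \in C(Q)$; combined with the coercivity estimate above (applied with $Q_n$ in place of $Q$, valid eventually since $Q_n(\overline{B(p_0,R)}) \to Q(\overline{B(p_0,R)}) \ge 3/4$ a.s.), this confines all minimizers $p_n$ to a fixed bounded set $\overline{B(p_0, R')}$, which is compact by \eqref{eq-compact}. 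The hard part is then to upgrade pointwise convergence $F_n \to F$ to uniform convergence on this compact set $K$: I would cover $K$ by finitely many small balls, use the equicontinuity furnished by the Lipschitz-type bound together with the a.s.\ convergence $\int \rho(p_0,q)Q_n(dq) \to \int \rho(p_0,q)Q(dq)$ to control the modulus of continuity of $F_n$ uniformly in $n$, and combine with pointwise convergence at the centers. Once $\sup_{p \in K}|F_n(p) - F(p)| \to 0$ a.s., a standard argument closes the loop: passing to a further subsequence $p_n \to p^* \in K$ with $\rho(p^*, C(Q)) \ge \epsilon$, we get $F(p^*) = \lim F_n(p_n) \le \lim F_n(q) = F(q)$ for any $q$, so $p^* \in C(Q)$, a contradiction.

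The main obstacle is precisely this uniform SLLN on a (possibly non-compact but locally compact) metric space: one must be careful that the compact set $K$ confining the minimizers can be chosen \emph{independently of $n$ and $\omega$} off a single null set, and that the equicontinuity estimate for the family $\{F_n\}$ holds simultaneously. This requires juggling countably many null sets (one for each rational radius, each center in a countable dense set, etc.) into a single $P$-null set $\Gamma$ — which is routine but must be done with care, and is where condition \eqref{eq-compact} is essential.

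Finally, part (c) is immediate: if $C(Q) = \{\mu\}$, then $(C(Q))^\epsilon = B(\mu,\epsilon)$, so \eqref{eq-2.3} says that for every $\epsilon > 0$ and a.e.\ $\omega$, eventually $\mu_n(\omega) \in B(\mu,\epsilon)$, which is exactly $\mu_n \to \mu$ almost surely; measurability of some selection $\mu_n$ from the closed-valued random set $C(Q_n)$ follows from a standard measurable selection theorem.
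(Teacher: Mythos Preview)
Your proposal is correct and follows essentially the same architecture as the paper's proof: establish coercivity of $F$ (and of $F_n$) to confine all minimizers to a fixed compact set, prove a uniform strong law $\sup_{p\in K}|F_n(p)-F(p)|\to 0$ a.s.\ on that compact via a finite cover plus a Lipschitz/equicontinuity bound, and then read off (b); part (c) is immediate in both.

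The one genuine difference is the coercivity tool. The paper integrates the triangle inequality and applies Cauchy--Schwarz to obtain the clean inequality
\[
\rho(p,q)\le F^{1/2}(p)+F^{1/2}(q),
\]
which in one stroke shows that any minimizing sequence for $F$ is bounded (part (a)) and, applied with $Q_n$ in place of $Q$, that $F_n^{1/2}(p)\ge \rho(p,p^*)-F_n^{1/2}(p^*)$, confining $C(Q_n)$ to a fixed compact set as soon as $F_n(p^*)$ is close to $F(p^*)$. Your ``mass in a ball'' argument ($Q(\overline{B(p_0,R)})\ge 3/4$, hence $F(p)\ge \tfrac34(\rho(p,p_0)-R)^2$) achieves the same conclusions but needs an extra SLLN step (for the indicator of the ball) and an $\epsilon$ of room in the $3/4$ threshold. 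Conversely, your treatment of the uniform convergence is arguably more careful than the paper's: when the paper reduces the non-compact case to the compact one ``with $S=M_1$'', the Lipschitz constant for $F_n$ on $M_1$ is not literally $2\,\mathrm{diam}(M_1)$ since the $X_j$ may lie outside $M_1$; one really needs the a.s.\ bound on $\tfrac1n\sum_j\rho(p_0,X_j)$ that you invoke. So the two arguments trade elegance at one step for care at another, but are otherwise the same proof.
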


\begin{proof}

First assume $S$ is compact.  Then (a) is obvious. To prove (b), it is enough to show that $\delta_n= \max\{\mid F_n(p) -F(p)\mid: p \in S\} \rightarrow 0 $ almost surely as $n\rightarrow \infty$. To see this let $\lambda= \min \{F(p): p \in S\}= F(q)$ $\forall q\in C(Q)$. If $(C(Q))^{\epsilon} = S$, then \eqref{eq-2.3} holds with $N=1$ (for every $\omega$). Assume $(C(Q))^{\epsilon}$ is not $S$, and write $M_1 = S\backslash(C(Q))^{\epsilon}$ . There exists $\theta(\epsilon) >0$, such that $\min\{F(p): p \in  M_1\} = \lambda +\theta(\epsilon).$ Also, there exists $\epsilon_1>0$, $\epsilon_1\leq \epsilon$, such that $F(p)\leq \lambda+\theta(\epsilon)/4$ $\forall$ $p\in \left(C(Q) \right)^{\epsilon_1}$. Since $\delta_n\rightarrow 0$ a.s., there exists $N=N(\omega)$ such that  such that $\forall n \geq N$, $F_n(p) < \lambda+ \theta(\epsilon)/3$  $\forall p \in (C(Q))^{\epsilon_1}$ and $F_n(p) > \lambda + \theta(\epsilon)/2$  $\forall p \in M_1$, so that $C(Q_n) \subset  (C(Q))^{\epsilon}$ , proving \eqref{eq-2.3}.  In order to show that $\delta_n\rightarrow 0$  a.s.  first note that, irrespective of $Q$, $|F(p) -  F(p')| \leq c\rho( p, p')$ where $c = 2\max\{\rho(q,q'): q,q'\in S\}$. Given any $\delta>0$, $|F(p) -  F(p')| < \delta/4 $ if $\rho(p,p') < \eta= \delta/4c.$ Let $q_1,\ldots,q_k$ be such that the balls $B(q_i:\eta)$ with radius $\eta$ and center $q_i$ cover $S$. Then $|F(p)  - F(q_i)| < \delta/4$ $\forall p\in  B(q_i:\eta)(i=1,\ldots,k).$ The same is true with $Q$ replaced by $Q_n$. By the strong law of large numbers (SLLN), there exists $N_1 = N_1(\omega; \delta)$ such that $|F_n(q_i) -F(q_i)| < \delta/2$ $\forall n \geq N_1$ ($i=1,\ldots k)$, outside a $P$-null set. It follows that, outside a $P$-null set,  $|F_n(p) -F(p|)|< |F_n(p) -F_n(q_i)| + |F_n(q_i)-F(q_i)| + |F(q_i) -F(p)| < \delta$ $\forall p\in B(q_i:\eta)$ $ (i=1,\ldots,k)$, provided $n \geq N_1.$

Consider now the non-compact case, but assuming \eqref{eq-compact}. Let $\lambda = \inf \{F(p): p\in S\}$. This infimum is attained in $S$. To see this, let $p_k$ ($k=1,2,\ldots$) be such that $F(p_k) \rightarrow \lambda$ as $k \rightarrow \infty$.  Since $\rho (p,q) \leq  \rho(p,x) + \rho(q,x)$ $\forall p,q,x$, one has
\begin{align}
\label{eq-2.4}
\rho(p,q) \leq  \int\rho (p,x)Q(dx)  + \int\rho(q,x)Q(dx) \leq  F^{1/2}(p) + F^{1/2}(q),  \forall p,q \in S.
\end{align}
 Letting $p= p_1$ and $q=p_k$, one obtains $\lim\sup_k  \rho(p_k,p_1) < \infty$. Hence the sequence $\{p_k\} $ is bounded, and its closure is compact,. Therefore, there exists $p^*$ such that $F(p^*) = \lambda$. Thus $C(Q)$ is nonempty and closed. If $q$ is any point in $C(Q)$ then taking $p=p^*$ and $q\in C(Q)$ in \eqref{eq-2.4}, one has $\rho(p^*,q) \leq 2\lambda^{1/2}$. That is $C(Q) \subset B(p^*,  \lambda^{1/2})$ . Thus part (a) is proved. To prove part (b), one has, using $Q_n$ for $Q$ and a fixed point $p^*$ for $q$ in $C(Q)$ in \eqref{eq-2.4}, the inequality  $F_n^{1/2}(p) \geq \rho(p,p^*)  -  F_n^{1/2}(p^*),$ $\forall p$.  Fix a $\delta >0$. Consider the compact set  $M_1 = \{q: \rho(q,p^*) \leq  2(\lambda+\delta)^{1/2} + \lambda^{1/2}\}$.  Then for $p \in S\backslash M_1$, one has $F_n(p) \geq [ 2(\lambda+\delta)^{1/2} + \lambda^{1/2}  - F_n^{1/2}(p^*)]^2 > \lambda + \delta$, $F_n(p^*) < \lambda+ \delta$ for all sufficiently large $n \geq N_1=N_1(\omega)$ except for $\omega$ lying in a $P$-null set, in view of the SLLN.  Hence $C(Q_n) \subset M_1$ for $n \geq N_1$.  Applying the result in the compact case (with $S= M_1$), one arrives at (b). Part (c) is an immediate consequence of part (b).

\end{proof}

For compact metric spaces $S$, part (c) of Theorem \ref{th-2.1} follows from \cite{ziezold}.

\begin{remark}
Theorem \ref{th-2.1} extends to more general Fr\'echet functions, including $F(p) = \int \rho^{\alpha}(p,q) Q(dq)$, $\alpha\geq 1$.
\end{remark}

\begin{remark}
 Relation \eqref{eq-2.3} does not imply that the sets $C(Q)$ and $C(Q_n) $ are asymptotically close in the Hausdorff distance. Indeed, in many examples $C(Q_n) $ may be a singleton, while $C(Q)$ is not. See, e.g., \cite{rabi03}, Remark 2.6, where it is shown that whatever be the absolutely continuous distribution $Q$ on $S^1$, $C(Q_n)$ is almost surely a singleton; in particular, this is the case when $Q$ is the uniform distribution for which $C(Q) = S^1$.  In view of this, and for asymptotic distribution theory considered later, it is important to find broad conditions on $Q$ for the \emph{existence of the Fr\'echet mean} (as the unique minimizer of the Fr\'echet function).
\end{remark}

Let $S=M$ be a \emph{differentiable manifold} of dimension $d$--a topological space which is  metrizable as a separable metric space such that (i) every $p \in  M$  has an open neighborhood up with a homeomorphism $\psi_p: U_p\rightarrow B_p$, where $B_p$ is an open subset of $\mathbb R^d$, and (ii) (compatibility condition)  if $U_p\cap U_q$ is nonempty, then the map $\psi_p\circ \psi_q^{-1}:  \psi_q(U_p\cap U_q)\rightarrow \psi_p(U_p\cap U_q)$ is a $C^{\infty}$  -a common example is the sphere $S^d = \{x\in \mathbb R^{d+1}: |x|=1\}$; one may take $p$ as the north pole (0,0,..,0,1) and $q$ as the south pole (0,0,\ldots ,0, -1),  $U_p= S^d\backslash \{q\}$ , $U_q= S^d\backslash\{p\},$ and $\psi_p$ and $\psi_q$ are the stereographic projections on $S^d\backslash\{q\}$ and $S^d\backslash\{p\}$ , respectively, onto $\R^d$.  Or, one may take 2$d$ open hemispheres $U_p$ of $S^d $ with poles whose coordinates are all zeros, except for +1 or - 1 at the $i$-th coordinate ($i=1,\ldots ,d$), each mapped diffeomorphically onto the open unit disc in $\mathbb R^d$. There are infinitely many distances  which metrize the topology of $M$. The two most common are (1) the Euclidean distance under an embedding, and (2) the geodesic distance when $M$ is endowed with a metric tensor.  For the first, recall that a smooth ($C^{\infty}$) map $J: M \rightarrow E^N$ is an embedding into an Euclidean space $E^N$, if (a) $J$ is one-to-one and $M\rightarrow J(M)$  is a homeomorphism with $J(M) $ given the relative topology of $E^N$, and (b) the differential $d_pJ $ on the tangent space $T_p(M)$ into the tangent space of $E^N$ at $J(p)$ is one-to-one.  The Euclidean distance on $J(M)$ (transferred to $M$ via $J^{-1}$) is called the \emph{extrinsic distance} $\rho_J$ on $M$. The embedding is said to be closed if $J(M)$ is closed. For $S^d$ one may, for example, take $J$ to be the inclusion map of $S^d$ into $\mathbb R^{d+1}$, and the extrinsic distance is the chord distance.

\begin{theorem}[\citep{rabi05} (Extrinsic Fr\'echet Mean on a Manifold)]
\label{th-2.2}
Let $M$ be a differentiable manifold and $Q$ a probability measure on it. If $J$ is a closed embedding of $M$ into an Euclidean space $E^N,$ and the Fr\'echet function of $Q$ is finite with respect to the induced Euclidean distance on $J(M)$), then the (extrinsic) Fr\'echet  mean exists as the unique minimizer of the Fr\'echet function if and only if there is a unique point $\mu_{J,E}$  in $J(M)$ closest to the Euclidean mean $m$ of the(push forward)  distribution $Q_J = Q\circ J^{-1}$  on $E^N$ , and then the extrinsic mean is $J^{-1} \mu_{J,E.}$
\end{theorem}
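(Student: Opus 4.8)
The plan is to reduce minimization of the Fr\'echet function on $M$ to a nearest-point problem in the ambient Euclidean space by means of the classical parallel-axis (bias--variance) identity. First I would rewrite the Fr\'echet function using $\rho_J(p,q)=|J(p)-J(q)|$ and the change of variables $y=J(q)$, pushing $Q$ forward to $Q_J=Q\circ J^{-1}$:
\begin{align*}
F(p)=\int_M |J(p)-J(q)|^2\,Q(dq)=\int_{E^N}|J(p)-y|^2\,Q_J(dy),\qquad p\in M.
\end{align*}
Since $F$ is assumed finite (hence, as noted before Theorem~\ref{th-2.1}, finite at every point of $M$), and $|y|\le |J(p)-y|+|J(p)|$, it follows that $\int_{E^N}|y|^2\,Q_J(dy)<\infty$, so the ordinary Euclidean mean $m=\int_{E^N} y\,Q_J(dy)$ is well defined and finite.

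Next I would expand $|J(p)-y|^2=|J(p)-m|^2+2\langle J(p)-m,\,m-y\rangle+|m-y|^2$ and integrate term by term. The cross term integrates to zero because $\int_{E^N}(m-y)\,Q_J(dy)=0$, so
\begin{align*}
F(p)=|J(p)-m|^2+V,\qquad V:=\int_{E^N}|y-m|^2\,Q_J(dy),
\end{align*}
with $V$ a finite constant independent of $p$. Hence $p$ minimizes $F$ over $M$ if and only if $J(p)$ is a point of $J(M)$ at minimal Euclidean distance from $m$; equivalently, $C(Q)=J^{-1}(N)$, where $N\subset J(M)$ denotes the set of points of $J(M)$ nearest $m$.

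It remains to settle existence and uniqueness. Because $J$ is a \emph{closed} embedding, $J(M)$ is a closed subset of $E^N$, so its intersection with any closed ball about $m$ is compact and the continuous map $x\mapsto|x-m|$ attains its infimum over $J(M)$; thus $N\neq\emptyset$ and $C(Q)\neq\emptyset$, in agreement with Theorem~\ref{th-2.1}(a) --- which in fact applies directly, since a closed embedding makes $(M,\rho_J)$ satisfy \eqref{eq-compact}. The identification $C(Q)=J^{-1}(N)$ now shows that $C(Q)$ is a singleton precisely when $N=\{\mu_{J,E}\}$ for a unique nearest point $\mu_{J,E}\in J(M)$, and since $J$ is a homeomorphism onto $J(M)$ the (extrinsic) Fr\'echet mean is then $J^{-1}\mu_{J,E}$.

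The step that carries all the content is the parallel-axis reduction in the second paragraph; once it is in place, the ``if and only if'' is immediate. The only other points needing care are minor: the measure-theoretic observation that a finite Fr\'echet function forces $\int_{E^N}|y|^2\,Q_J(dy)<\infty$ (so that $m$ exists), and the standard fact that a closed subset of $E^N$ contains a point nearest any given point. It is precisely this last fact --- and hence the hypothesis that the embedding be \emph{closed} --- that guarantees the existence of $\mu_{J,E}$, leaving the uniqueness of the nearest point as the sole condition governing existence of the Fr\'echet mean.
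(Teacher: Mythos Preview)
Your proof is correct and follows essentially the same approach as the paper: both reduce the problem via the parallel-axis (bias--variance) identity $\int|c-y|^2\,Q_J(dy)=|c-m|^2+\int|m-y|^2\,Q_J(dy)$ to conclude that minimizing $F$ over $M$ is equivalent to finding a nearest point of $J(M)$ to $m$. Your write-up is somewhat more detailed---you explicitly justify the existence of $m$ via finiteness of the second moment and invoke closedness of $J(M)$ to guarantee a nearest point exists---whereas the paper leaves these points implicit.
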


\begin{proof}   For a point $c\in J(M)$, writing $| y|^2 =\sum_{i=1}^N( y^{(i)})^2$  for the usual squared Euclidean norm on $E^N, $
\begin{align}
\int_{ J(M)}|c- y |^2 Q_J(dy) = \int_{E^N} |c -y|^2 Q_J(dy) =  \int_{E^N} | m-  y|^2  Q_J(dy) + |c - m |^2 .
\end{align}
This is minimized with respect to $c$, by setting $c$ to be the point in $J(M)$ closest to $m$ , if there is only one such  point, and the minimizer is not unique otherwise.
\end{proof}

\begin{example}[ Extrinsic Mean on the Sphere $S^d$]
Let  the inclusion map on $S^d$ into $\mathbb R^{d+1}$ be the embedding $J$. Then the mean $m$ of $Q_J$ on $\R^{d+1}$  lies inside the unit ball $B(0: 1)$ in $\R^{d+1}$ unless $Q$ is degenerate at a point $m\in S^d.$  If $Q$ is nondegenerate, the closest point to $m$ in $S^d$  is $m/|m|$ unless $m =0$ (i.e., $m$ lies at the center of the unit ball).  Thus (the image of ) the extrinsic mean is $\mu_{J,E} = m/|m|$. If $m=0$, then $C(Q)= S^d$.  If $Q$ is degenerate at $m$, then $m$ is the extrinsic mean. Taking $Q$ to be the empirical $Q_n$, the sample Fr\'echet mean is $\bar X/ |\bar X|,$ if $\bar X$ is not the origin in $\R^{d+1}$. If $\bar X  =0$, then $C(Q_n) = S^d.$
\end{example}
Theorem \ref{th-2.2} allows one in many important cases of interest in image analysis to find analytic characterizations for the existence of the extrinsic mean (i.e., as the unique minimizer of the Fr\'echet function) and computable formulas for its computation. This will be discussed in Section 3.

Unfortunately, on a Riemannian manifolds $M$ with metric tensor $g$ there is no good analog of Theorem \ref{th-2.2} for the \emph{intrinsic mean} of $Q$, -- the minimizer of the Fr\'echet function under the geodesic distance $\rho_g.$  The pioneering work by  \cite{karcher} followed by generalizations and strengthening, most notably, by \cite{kenws}, \cite{leh01} and \cite{Afsari2011} hold under support restrictions on $Q$, which are untenable for general statistical inference. The recent results of \cite{Afsari2011} are the sharpest among these, which we state below (for the Fr\'echet function \eqref{eq-frefunc}) without proof. For the terminology used in the statement we refer to the Appendix on Riemannian geometry. Recall that the support of a probability measure $Q$ on a metric space is the smallest closed set $D$ such that $Q(D)=1$.

\begin{theorem}[\citep{Afsari2011}  (Intrinsic Mean on a Riemannian Manifold)]
\label{th-2.3}  On a complete Riemannian manifold $(M,g)$, there exists an intrinsic Fre\'chet mean of $Q$, as the unique minimizer of the Fre\'chet  function \eqref{eq-frefunc} with the geodesic distance $\rho= \rho_g$, if the support of $Q$ is contained in a  geodesic ball of radius less than $r^* = (1/2)\min\{\text{inj} (M), \pi/\sqrt{\bar C}\}$. Here $inj(M)$ is the injectivity radius of $M$; and $\bar C$  is the supremum of sectional curvatures of $M$,  if positive, or zero otherwise.
\end{theorem}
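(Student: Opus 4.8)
The plan is to run the Karcher--Afsari program: existence follows from Theorem~\ref{th-2.1} together with a reduction to the support ball, and uniqueness from a quantitative geodesic-convexity estimate for the Fr\'echet function on that ball, driven by Jacobi-field comparison. Concretely, fix a closed geodesic ball $\bar B=\bar B(p,r)\supset\operatorname{supp}(Q)$ with $r<r^*$. Since $(M,g)$ is complete, condition~\eqref{eq-compact} holds, so Theorem~\ref{th-2.1}(a) already gives that $C(Q)$ is nonempty and compact; it remains to locate it. Because $r<r^*$ and $r^*$ is a lower bound for the convexity radius of $M$ at $p$, the ball $\bar B$ is strongly convex, and the nearest-point projection $\Pi_{\bar B}\colon M\to\bar B$ onto it does not increase the Fr\'echet function: $\Pi_{\bar B}$ fixes $\operatorname{supp}(Q)$, so $\rho(\Pi_{\bar B}(x),y)\le\rho(x,y)$ for $y\in\operatorname{supp}(Q)$, whence $F(\Pi_{\bar B}(x))\le F(x)$ with strict inequality when $x\notin\bar B$. (This reduction is carried out in \cite{Afsari2011}; alternatively a first-variation argument using $\operatorname{grad}F(x)=-2\int\exp_x^{-1}(y)\,Q(dy)$ gives the same conclusion.) Hence $C(Q)\subset\bar B$, and it suffices to analyze $F$ on the compact convex set $\bar B$.

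Next I would prove that $F$ is strictly convex along minimizing geodesics inside $\bar B$. For $z\in\bar B$ and $y\in\operatorname{supp}(Q)$ one has $\rho(z,y)\le\operatorname{diam}\bar B\le 2r<2r^*=\min\{\operatorname{inj}(M),\pi/\sqrt{\bar C}\}$; in particular $\rho(z,y)<\operatorname{inj}(M)$, so $\rho_y^2:=\rho^2(\cdot,y)$ is smooth near $z$, and $\rho(z,y)<\pi/\sqrt{\bar C}$, so Rauch/Jacobi-field comparison (using that sectional curvatures are $\le\bar C$) is available and yields
\begin{equation}
\operatorname{Hess}\bigl(\tfrac12\rho_y^2\bigr)(v,v)\ \ge\ \langle v,u_y\rangle^2+d\,\operatorname{ct}_{\bar C}(d)\,\bigl(|v|^2-\langle v,u_y\rangle^2\bigr),\qquad d=\rho(z,y),
\end{equation}
where $u_y$ is the unit tangent at $z$ of the geodesic toward $y$ and $\operatorname{ct}_{\bar C}(d)=\sqrt{\bar C}\cot(\sqrt{\bar C}d)$ (read as $1/d$ when $\bar C=0$). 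Integrating in $y$ against $Q$, and using the first-order condition $\int\exp_z^{-1}(y)\,Q(dy)=0$ at any critical point $z$ of $F$, I would deduce that $\operatorname{Hess}F(z)$ is positive definite; a strictly convex $F$ on the convex set $\bar B$ has at most one minimizer, so together with the reduction above this gives existence and uniqueness of the intrinsic Fr\'echet mean.

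The main obstacle is precisely the positivity of $\operatorname{Hess}F$ in the positive-curvature case. When $\bar C\le 0$ the displayed bound already gives $\operatorname{Hess}(\tfrac12\rho_y^2)\ge g$ wherever $\rho_y^2$ is smooth, so $F$ is $2$-strongly convex and nothing more is needed. But for $\bar C>0$ the factor $d\,\operatorname{ct}_{\bar C}(d)$ turns \emph{negative} once $d>\pi/(2\sqrt{\bar C})$, and $\operatorname{diam}\bar B$ may exceed $\pi/(2\sqrt{\bar C})$ --- on $S^2$, for instance, $r^*=\pi/2$ and $\rho_y^2$ is genuinely non-convex on the far part of $\bar B$ --- so the individual summands $\operatorname{Hess}(\tfrac12\rho_y^2)$ need not be positive semidefinite and a term-by-term argument collapses. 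One must instead show that too little $Q$-mass can sit far from a critical point $z$ for the \emph{averaged} Hessian to fail positivity, using the energy bound $F(z)\le F(p)\le r^2$ and the Karcher equation at $z$ together with sharp comparison inequalities. This is the technical heart of \cite{Afsari2011} (refining \cite{karcher}, \cite{kenws}, \cite{leh01}), and it is exactly here that the constant $r^*=\tfrac12\min\{\operatorname{inj}(M),\pi/\sqrt{\bar C}\}$ is used to its limit: the factor $\tfrac12$ is what keeps $\operatorname{diam}\bar B<\min\{\operatorname{inj}(M),\pi/\sqrt{\bar C}\}$, so that $\rho_y^2$ stays smooth on $\bar B$ and $d\,\operatorname{ct}_{\bar C}(d)$ stays bounded below, which is what allows the integrated estimate to close. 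The remaining ingredients --- strong convexity of small balls, the projection step, and passing from strict convexity to uniqueness --- are routine.
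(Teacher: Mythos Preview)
The paper does not prove this theorem: it is stated explicitly ``without proof'' and attributed to \cite{Afsari2011} (with antecedents \cite{karcher}, \cite{kenws}, \cite{leh01}). There is therefore no in-paper argument to compare your proposal against.

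That said, your sketch is a faithful outline of the Afsari program itself: reduce to the support ball via convexity/projection, then establish strict geodesic convexity of $F$ there using the Rauch/Jacobi comparison bound on $\operatorname{Hess}(\tfrac12\rho_y^2)$. You also correctly identify the genuine difficulty --- that for $\bar C>0$ the individual Hessians $\operatorname{Hess}(\tfrac12\rho_y^2)$ can be indefinite on $\bar B$ once $\rho(z,y)>\pi/(2\sqrt{\bar C})$, so one cannot argue term by term and must instead control the \emph{integrated} Hessian using the Karcher first-order condition together with the energy bound $F(z)\le r^2$. This is indeed the technical core of \cite{Afsari2011}, and your proposal is honest in flagging it as the step that is asserted rather than carried out. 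If you were to turn this into a self-contained proof you would need to supply that averaging estimate; as a reading of the cited literature, your summary is accurate.
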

\begin{remark}  If the Riemannian manifold $M$ is complete, simply connected and has non-positive curvature and the Fr\'echet function of $Q$ is finite, then the intrinsic mean of $Q$ exists (as the unique minimizer of $F$). An important generalization of this is to the so called \emph{metric spaces of non-positive curvature}, or the NPC spaces, which include many interesting metric spaces which are not manifolds. Such spaces were introduced by \cite{alex57} and further developed by \cite{ras68} and \cite{gromov1981structures}. See \cite{sturm2003probability} for a fine exposition.
\end{remark}

\begin{example}
\label{ex-2.2}
 Let $M = S^2.$ Then it has constant sectional curvature 1, and its injectivity radius is $\pi$. Thus if $Q$ has support contained in an open hemisphere, then the Fr\'echet mean of $Q$ under the geodesic distance exists. To see that one cannot relax this support condition in general, consider the uniform distribution on the equator. Then the minimum expected squared distance is attained at both the North and South poles (say, (0,0,1), and (0,0, -1)), so that $C(Q)$ has two points.
\end{example}

\begin{remark}  For purposes of statistical inference the support condition in Theorem \ref{th-2.3} is  restrictive, but as Example \ref{ex-2.2} shows one cannot dispense with the support condition without some further conditions on the nature of $Q$. In statistical practice a reasonable assumption is that the distribution is absolutely continuous.  In $S^1$ under the assumption that $Q$ has a continuous density (with respect to the arc length measure on intervals, i.e., the Lebesgue measure on [0, 2$\pi$) ) a necessary and sufficient condition, which applies broadly, was obtained in \cite{rabi07} and may be found  in \cite{rabibook}, pp. 31-33, 73-75.
\end{remark}

\section{Geometry of  Kendall's Shape Spaces. }




\subsection{Kendall's Similarity Shape Space $\Sigma_m^k$}

The similarity shape of a $k$-ad $x =(x_1,\cdots,x_k)$  in $\RR^m$, not all points the same, is its orbit under the group generated by translations, scaling and rotations. Writing  $\bar{x} =  (x_1+\cdots+x_k)/k$, $<\bar{x}> = (\bar{x},\cdots, \bar{x})$, the effect of translation is removed by looking at $(x_1- \bar{x},\cdots,x_k-\bar{x}) = x - <\bar{x}>$, which lies in the $mk - m$ dimensional hyperplane $L$ of $\RR^{mk}$ made up of $m\times k$ matrices with the $m$ row sums all equal to zero. To get rid of scale, one looks at $u= (x- <\bar{x}>)/ |x -<\bar{x}>|$, where $|. |$ is the usual norm in $\RR^{mk}$. This translated and scaled $k$-ad is called the \emph{preshape} of the $k$-ad.  It lies on the unit sphere in $L$, and is isomorphic to $S^{m(k-1) -1}$.  An alternative representation of the preshape, which we use, is obtained as $p= xH/|xH|$, where H is the $k\times(k-1)$ \emph{Helmert matrix} comprising $k-1$ column vectors forming an orthonormal basis of $1^{\perp}$, namely, the subspace of $\RR^k$ orthogonal to $(1,\cdots,1)'$.  A standard H has the $j$-th column given by $(a(j),\cdots,a(j), -ja(j), 0,\cdots, 0)'$, where the first $j$ elements equal $a(j) = [j(j+1)]^{-1/2}$ $(j=1,\cdots,k-1)$. Then $p$ is an $m\times(k-1)$ matrix of norm one. The \emph{shape} $\sigma(x) = \sigma(p)$ of $x$ is then identified with the orbit of $p$ under all rotations:
\begin{align}
\label{eq-6.1}
&\sigma(x)=\sigma(p)=\{Ap : A \in SO(m)\},
\end{align}
where $SO(m) = \{\text{A}: AA' =I_m, det(A) =1\} $  is called the \emph{special orthogonal group} acting on $\RR^m$.  The set of all shapes $\sigma(p)$ is Kendall's \emph{similarity shape} space $\Sigma_m^k.$

If $m=2$, $k>2$, the action of $SO(2)$ on the preshape sphere $S^{2k-3}$ is \emph{free}, i.e., no $A\in SO(2)$ other than the identity has a fixed point and each orbit of a point in $S^{2k-3}$ has an orbit of dimension one, namely the dimension of $SO(2)$. Since each $A\in SO(2)$ is an isometry of $S^{2k-3}$ endowed with the geodesic distance, it follows that $\Sigma_2^k=S^{2k-3}/SO(2)$ is a Riemannian manifold. For $m>2$, $k>m$, however, the action of $SO(m)$ on $S^{m(k-1)-1}$ is not free. For example, for $m=3$, each collinear $k$-ad in $S^{3(k-1)-1}$ is invariant under all rotations in $\mathbb R^3$ around the line of the $k$-ad. $\Sigma_3^k$ is then a disjoint union of two Riemannian manifolds, not complete, one comprising of the orbits of collinear $k$-ads under rotation by elements of $SO(2)$ other than those that keep it fixed (except for the identity). The other comprises of orbits under $SO(3)$ of all non-collinear $k$-ads in $S^{3(k-1)-1}$. $\Sigma_3^k$ is then a stratified space with two strata. More generally, $\Sigma_m^k$, $m>2$ ($k>m$), is a \emph{stratified space} with $m-1$ strata. See \cite{kBCL}, Chapter 6, for a complete description of the intrinsic geometry of $\Sigma_m^k$.  Also see \cite{Huck} for intrinsic analysis of more general stratified spaces of the form $M=N/\mathcal G$, where $N$ is a Riemannian manifold and $\mathcal G$ is a Lie group of isometries acting on $N$.

3.1(a).\textbf{Intrinsic geometry of $\Sigma_2^k$.}
For the case $m=2$, it is convenient to regard a $k$-ad $x =((x_1,y_1),\cdots,(x_k,y_k ))$ as a $k$-tuple $z= (z_1,\cdots,z_k )$ of numbers $z_1=x_1+iy_1,\cdots,z_k =x_k+iy_k$ in the complex plane $\mathbb{C}$, and let $p= (z- <\bar{z}>)/ |z -<\bar{z}>|$ . Then the shape of $p$, or $z$, is identified with the orbit
\begin{equation}
\label{eq-6.2}  \sigma(z) = \sigma(p) = \{ e^{i\theta}p: \theta\in (-\pi, \pi] \}.
\end{equation}
One may equivalently, consider the shape as the orbit $\{\lambda(z- <\bar{z}>) : \lambda\in\mathbb{C}\}$. That is, after Helmertization, the shape of $x$, or $z$, is identified with a complex line passing through the origin in $ \mathbb{C}^{ k-1}$. The shape space is then identified with the \emph{complex projective space}  $\mathbb{C}P^{k-2}$, of (real) dimension $2k-4$. We will, however, use the representation $\Sigma_2^k=\mathbb CS^{k-1}/G$,  where $G=\{ e^{i\theta}: \theta\in (-\pi, \pi] \}$ is a 1-dimensional compact group ($G \simeq S^1 )$ of isometries of the preshape sphere $\mathbb{C}S^{k-1} =\{p= (p_1,\cdots,p_{k-1})\in \mathbb{C}^{k-1}: |p|=1\}$, which is isomorphic to  $S^{2k-3}$.  Recall that the metric tensor on  $S^{2k-3} \simeq \mathbb{C}S^{k-1}$ is that inherited from the inclusion map into $\RR^{2(k-1)} = \{(x_1,y_1, x_2,y_2,  \cdots,x_{k-1},y_{k-1}): (x_j,y_j) \in \RR^2\; \forall j\}\simeq \mathbb{C}^{k-1} =\{(z_1, z_2,\cdots, z_{k-1}): z_j=x_j+iy_j \in \mathbb{C}\; \forall j \}$.  That is, the inner product at the tangent space $T_p\mathbb CS^k$ is $\langle \tilde v,\tilde w\rangle = Re(vw^*)$, when $\tilde v$,$\tilde w$ are expressed as complex $1\times(k-1)$ matrices (row vectors) in  $\mathbb{C} S^{k-1}$, satisfying $Re(p\tilde v^*)=0=Re(p\tilde w^*)$. The projection map is then $\pi: p\rightarrow \sigma(p)$.  The \emph{vertical subspace} $V_p$ is obtained by differentiating the curve $\theta\rightarrow e^{i\theta}p$, say at $\theta=0$, yielding $ip$. That is, $V_p = \{ cip:  c \in\RR\}$. Thus the \emph{horizontal subspace} is $H_p = \{\tilde{v}: Re(p\tilde{v}^*) =0, Re((ip)\tilde{v}^*) =0\} = \{\tilde{v}: p\tilde{v}^*=0\}$. The \emph{geodesics} $\gamma(t; \sigma(p),v)$ for $v = (d_p\pi)\tilde{v}$ (for $\tilde{v}$ in $H_p$), and the exponential map $Exp_{\sigma(p)}$ on $\Sigma_2^k$  are specified by this isometry between $T_{\sigma(p)}( \Sigma_2^k  )$ and $H_p$ for all shapes $\sigma(p)$ (See the Appendix, Section \ref{sec-app}). Thus, identifying vectors $v$ in $H_p$ with vectors $v$ in $T_{\sigma(p)}( \Sigma_2^k  )$, one obtains
\begin{align}
\label{eq-6.3}
&T_{\sigma(p)}( \Sigma_2^k  ) =\{ v= (d_p\pi)\tilde{v}: \forall v\;\text{ such that}\; p\tilde{v}^* =0\}\\ \nonumber
&Exp_{\sigma(p)} 0 =  \sigma(p), \;  Exp_{\sigma(p)} v = \sigma(\cos(|\tilde{v}|)p + \sin(|\tilde{v}|)\tilde{v}/|\tilde{v}| )\; ( v\neq 0,\; p\tilde{v}^* =0);\\ \nonumber
 &\gamma(t; \sigma(p),v) = \sigma ( (\cos t|\tilde{v}|)p + (\sin t|\tilde{v}|)\tilde{v}/|\tilde{v}| ), \;(t \in\RR, p\tilde{v}^* =0),\;v\neq 0.
 \end{align}
Denoting by $\rho_{gs}$ and $\rho_g$ the geodesic distances on $\mathbb{C} S^{k-1}$ and $\Sigma_2^k$, respectively, and recalling that (See  Example 10.1 and  \cite{kBCL}, p.114)  $\rho_{gs}(p,q) = \arccos(Re pq^*)$, one has
\begin{align}
\label{eq-6.4}
\rho_g (\sigma(p),\sigma(q)) &= \inf\{ \rho_{gs} (p,q): p \in O_u,  q \in O_w \}\\ \nonumber
& = \inf\{ \arccos (Re e^{i\theta}pq^*):\theta\in [0,2\pi)\}\\ \nonumber
& = \arccos(|pq^*|)  \in[0,\pi/2].
 \end{align}
It follows that the geodesics are periodic with period $\pi$, and the cut locus of $\sigma(p)$ is $\{ \sigma(q): \text{all}$ $q$  $\text{ such that} \arccos(|pq^*|) = \pi/2\}$, and that the injectivity radius of $\Sigma_2^k$ is $\pi/2$.  The inverse exponential map is given by $Exp_{\sigma(p)}^{-1}(\sigma(q)) = v$, where $ v= (d_p\pi)\tilde{v} \; (\tilde{v} \in H_p)$, and $\tilde{v}$ satisfies (Use \eqref{eq-50} with the representation of $S^{2k-3}$ as $\mathbb{C} S^{k-1}$)
\begin{align}
\label{eq-6.5}
 \tilde{v}& = Exp_p^{-1}(qe^{i\theta})\\ \nonumber
  &= [\arccos(Re(pq^*e^{-i\theta})](1-[Re(pq^*e^{-i\theta})]^2)^{-1/2}{ \left(qe^{i\theta} -( pq^*e^{-i\theta})p\right)},
  \end{align}
where $\theta$ is so chosen as to minimize  $\rho_{gs}(p, qe^{i\theta})= \arccos(Re(pq^*e^{-i\theta}))$. That is,
$(pq^*e^{-i\theta}) =|pq^*|$,   or $e^{i\theta}=pq^*/ |pq^*|$ ( for $pq^*\neq0$, i.e., for $\sigma(q)$ not in $C(\sigma(p))$.
 Hence, writing $\rho=  \rho_g(\sigma(p),\sigma(q))$, $\rho\neq0$, one has
\begin{align}
\label{eq-6.6}
 \tilde{v}&= [\arccos (|pq^*|)](1-|pq^*|^2)^{-1/2}\{ (pq^*/ |pq^*|) q -|pq^*|p\} \\ \nonumber                                                                                   
   &= [\rho/\sin \rho]  \{ qe^{i\theta} -(\cos \rho)p\}\;  \quad  \quad  \quad  \quad  (e^{i\theta}=pq^*/\cos \rho ).
\end{align}
This horizontal vector $\tilde{v}$ ($\in H_p$) represents $Exp_{\sigma(p)}^{-1}(\sigma(q)) = v$.

The sectional curvature of $\Sigma_2^k$  at a section generated by two orthonormal vector fields $\tilde{W}_1$ and $\tilde{W}_2$  is $1 + 3\cos^2\phi$ where $\cos\phi = \langle U_1, iU_2 \rangle$, $U_1$ and $U_2$ being the horizontal lifts of $\tilde{W}_1$ and $\tilde{W}_2$ (See \cite{docarmo}).

3.1(b).\textbf{ Extrinsic geometry of  $\Sigma_2^k$  induced by an equivariant embedding.} As mentioned in Section 2,  no broad sufficient condition is known for the  existence  of the intrinsic mean (i.e., of the uniqueness of the minimize of the corresponding Fr\'echet function). The extrinsic mean, on the other hand, is unique for most $Q$, and is generally computable analytically.  However, for an extrinsic analysis to be very effective one should choose a good embedding which retains as many geometrical features of the shape manifold as possible.  Let $\Gamma$ be a Lie group acting on a differentiable manifold $M$, and denote by $GL(N,\RR$) the \emph{general linear group }of nonsingular transformations on a Euclidean space $E^N$ of dimension $N$ onto itself. An embedding $J$ on $M$ into $E^N$ is said to be \emph{$\Gamma$-equivariant}  if there exists a group homomorphism $\Phi: \gamma\rightarrow \phi_{\gamma}$ of $\Gamma$ into $GL(N, \RR)$ such that  $J(\gamma p)) =  \phi_\gamma(Jp)$ $\forall p \in M$, $\gamma\in\Gamma$. Often, when there is a natural Riemannian structure on $M$, $\Gamma$ is a group of isometries of $M$.   Consider the so-called \emph{Veronese-Whitney embedding} $J$ of $\Sigma_2^k$  into the (real) vector space $S(k-1, \mathbb{C})$  of all $(k-1)\times(k-1)$ Hermitian matrices  $B=B^*$, defined by
\begin{equation}
\label{eq-6.9}  J\sigma(p) = p^*p \;  \;\quad \quad \quad \quad [\sigma(p) = \{e^{i\theta}p, \theta\in[0,2\pi),  p\in \mathbb{C} S^{k-1}\}  ].
\end{equation}
The Euclidean inner product on $S(k-1, \mathbb{C})$ , considered as a real vector space, is given by $\langle B,C\rangle = Re(\text{Trace} (BC^*))$.  Let $SU(k-1)$ denote the \emph{special unitary group} of all $(k-1)\times(k-1) $ unitary matrices $A$ (i.e., $A^*A = I$, $det (A) =1$) acting on $S(k-1, \mathbb{C}$)  by  $B\rightarrow  A^*BA$. Then the embedding \eqref{eq-6.9} is $\Gamma$- equivariant, with  $\Gamma = \{ \gamma_A: A \in SU(k-1)\}$ and the group action on $\Sigma_2^k$  given by:  $\gamma_A \sigma(p)= \sigma(pA)$. For  $J\sigma(pA)= A^*p^*pA =             \phi(\gamma_A ) (J\sigma(p))$, say, where the group homomorphism on $\Gamma$  onto $SU(k-1)$ is given by $\gamma_A \rightarrow \phi(\gamma_A ): \phi(\gamma_A )B= A^*BA.$ Note that $SU(k-1)$ is a group of isometries of $S(k-1, \mathbb{C})$. In the notation defining equavariance, one lets $S(k-1, \mathbb C)=E^N$ ($N=(k-1)^2$), $SU(k-1)$ is a subgroup of $GL(N, \mathbb R)$. 


To compute the extrinsic mean of $Q$ on $\Sigma_2^k$, let $Q_J=Q\circ J^{-1}$ be the probability induced on $S(k-1, \mathbb C)$ by the map $J$ in \eqref{eq-6.9}, and let $\mu_J$ denote its Euclidean mean. By Theorem \ref{th-2.1}, the (image of the) extrinsic mean of $Q$ is given by the orthogonal projection $P$ on $J(\Sigma_2^k)$.

\begin{proposition}\citep{rabi03}
\label{prop-3.1}
The image under $J$ of the extrinsic mean of $Q$ comprises all elements of the form $w^*w$ where $w^*$ is a normalized (column) eigenvector with the largest eigenvalues 
 of $\mu_J$. In particular, the extrinsic mean of $Q$ exists if and only if the largest eigenvalue of $\mu_J$ is simple.
\end{proposition}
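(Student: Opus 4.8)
The plan is to reduce the minimization over $J(\Sigma_2^k)$ to a Rayleigh quotient maximization and then invoke Theorem \ref{th-2.2}. First I would record the structural facts that make that theorem applicable. Writing a generic element of $J(\Sigma_2^k)$ as $p^*p$ for a unit row vector $p\in\mathbb C^{k-1}$, the image $J(\Sigma_2^k)$ is exactly the set of rank-one orthogonal projections in $S(k-1,\mathbb C)$; it is compact, being the continuous image of the compact preshape sphere $\mathbb C S^{k-1}$, hence closed, so $J$ is a closed embedding. Moreover $\mu_J$, being an average of the positive semidefinite, trace-one matrices $p^*p$, is itself Hermitian, positive semidefinite and of trace one. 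Thus by Theorem \ref{th-2.2} it suffices to describe the set of points of $J(\Sigma_2^k)$ nearest to $\mu_J$ in the inner product $\langle B,C\rangle = Re(\text{Trace}(BC^*))$, and to determine when that set is a singleton.

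Next I would expand the squared distance. For a unit vector $w$, using $ww^*=1$ one computes $\|w^*w\|^2 = \text{Trace}(w^*ww^*w)=1$, a constant in $w$, and $\langle w^*w,\mu_J\rangle = Re(\text{Trace}(w^*w\mu_J)) = w\mu_J w^*$, the last step because $\mu_J$ is Hermitian so the scalar $w\mu_J w^*$ is real. Hence
\[
\|w^*w-\mu_J\|^2 = 1 + \|\mu_J\|^2 - 2\, w\mu_J w^*,
\]
so minimizing the left-hand side over unit $w$ is equivalent to maximizing the Rayleigh quotient $w\mu_J w^*$. By the Courant–Fischer (Rayleigh–Ritz) characterization of eigenvalues of a Hermitian matrix, this maximum equals the largest eigenvalue $\lambda_1$ of $\mu_J$ and is attained precisely on the unit vectors lying in the $\lambda_1$-eigenspace $E_{\lambda_1}$. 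Therefore the image under $J$ of the Fréchet mean set $C(Q)$ is $\{w^*w : w\in E_{\lambda_1},\ |w|=1\}$, which is the description claimed.

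Finally I would settle the uniqueness dichotomy. The key elementary observation is that two unit vectors $w,u$ satisfy $w^*w = u^*u$ if and only if they span the same complex line, i.e. $u=e^{i\theta}w$; equivalently, the map $w\mapsto w^*w$ is constant on the unit vectors of $E_{\lambda_1}$ precisely when $\dim_{\mathbb C}E_{\lambda_1}=1$, that is, when $\lambda_1$ is simple. If $\lambda_1$ is simple, Theorem \ref{th-2.2} then gives that the extrinsic mean exists, with image the single projection $w^*w$. If $\lambda_1$ has multiplicity at least two, choosing orthonormal $w,u\in E_{\lambda_1}$ and letting $\alpha w+\beta u$ range over $|\alpha|^2+|\beta|^2=1$ produces a continuum of distinct nearest points, so by Theorem \ref{th-2.2} the extrinsic mean fails to exist. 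I do not expect a serious obstacle anywhere; the only point that must be handled with care is exactly this step converting ``$\lambda_1$ simple'' into ``unique minimizer'' (and its converse), which rests on the phase-rigidity of rank-one projections. A minor bookkeeping point is to note that the phase ambiguity in the preshape representative $p$ of a shape $\sigma(p)$ is harmless, since $J\sigma(p)=p^*p$ is manifestly phase-invariant.
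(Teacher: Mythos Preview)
Your proof is correct and follows essentially the same route as the paper: both expand $\|w^*w-\mu_J\|^2$ to a constant minus $2\,w\mu_J w^*$ and then maximize this Rayleigh quotient, the paper by explicit diagonalization $T\mu_J T^*=D$ and you by invoking Courant--Fischer directly. Your write-up is slightly more complete in that you spell out the closedness of $J(\Sigma_2^k)$ needed for Theorem~\ref{th-2.2} and argue the uniqueness dichotomy via phase-rigidity of rank-one projections, whereas the paper leaves that last step implicit.
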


\begin{proof}
Let $T$ be a $(k-1)\times (k-1)$ unitary matrix such that $T\mu_J T^*=D\equiv diag (\lambda_1,\lambda_2,\ldots, \lambda_{k-1})$ where $\lambda_1\leq \lambda_2\leq \cdots\leq \lambda_{k-1}$ are the ordered eigenvalues of $\mu_J$. Then the columns of $T^*$ form a complete orthonormal set of eigenvectors of $\mu_J$. By relabelling the landmarks, if necessary, we may assume that the $i$th column of $T^*$ is an eigenvector with eigenvalue $\lambda_i$. Write $\|A\|^2=\text{Trace}(AA^*)$ as the square of the Euclidean norm of $A$. Then for elements $w^*w$ of $J(\Sigma_2^k)=\{w^*w, w\in \mathbb CS^{k-1} \}$, denoting $v=wT^*$, one has 
\begin{align*}
\|w^*w-\mu_J\|^2&=\|Tw^*wT^*-T\mu_JT^*\|=\|v^*v-D\|^2\\
&=\sum_{i,j}|\bar v_iv_j-\lambda_i\delta_{ij}|^2=1+\sum_i\lambda_i^2-2\sum_i\lambda_i|v_i|^2
\end{align*}
which is minimized over $J(\Sigma_2^k)$ by taking $v_{k-1}=1$ and $v_i=0$ $\forall i <k-1$, i.e., by taking $w^*=T^*v^*$ be any normalized eigenvector of $\mu_J$ with the largest eigenvalue. 
\end{proof}

  A \emph{size-and-shape similarity shape } $s\sigma(z)$ is defined for Helmertized $k$-ads $z=(z_1,\cdots,z_{k-1})$ as its orbit under $SO(m)$. An equivariant embedding for it is $s\sigma(z)\rightarrow z^*z/|z|$, on the \emph{size-and-shape-similarity shape} space $S\Sigma_2^k$  into $S(k-1, \mathbb{C}).$

\subsection{Reflection Similarity Shape Space  $R\Sigma_m^k$, $m>2$, $k>m$.}

 For $m>2$,  let $\tilde{N}S^{m(k-1)-1}$ be the subset of the centered preshape sphere $S^{m(k-1)-1}$ whose points $p$ span $\RR^m$, i.e., which, as $m\times k$ matrices, are of full rank. We define the \emph{reflection similarity shape} of the k-ad as
 \begin{equation}\label{eq-6.10}
 r\sigma(p) = \{ Ap:  A \in O(m)\} \;  (p \in\tilde{N}S^{m(k-1)-1} ), 
\end{equation}
where $O(m)$ is the set of all $m\times m$ \emph{orthogonal matrices } $A: AA' = I_m$, $det (A) = \pm1$.  The set $\{r\sigma(p):  p \in\tilde{N}S^{m(k-1)-1} \}$ is the \emph{reflection similarity shape space} $  R\Sigma_m^k  = \tilde{N}S^{m(k-1)-1} /O(m)$.  Since $\tilde{N}S^{m(k-1)-1}$ is an open subset of the sphere  $S^{m(k-1)-1}$, it is a Riemannian manifold. Also $O(m)$ is a compact Lie group of isometries  acting on $S^{m(k-1)-1}$. Hence there is a unique Riemannian structure on $R\Sigma_m^k$  such that the projection map $p\rightarrow r\sigma(p)$ is a Riemannian submersion.

  We next consider a useful embedding of $R\Sigma_m^k$ into the vector space $S(k,\RR$) of all $ k\times k$ real symmetric matrices (See \cite{vic05}, \cite{Bandulasiri2009}, \cite{dryen2}, and \cite{abs2}). Define
\begin{equation}\label{eq-6.12}  J(r\sigma(p)) = p'p \;    (p \in \tilde{N}S^{m(k-1)-1}),
\end{equation}
with $p$ an $m\times(k-1)$ matrix with norm one. Note that the right side is a function of $r\sigma(p)$.  Here the elements $p$ of the preshape sphere are Helmertized.  To see that this is an embedding, we first show that $J$ is one- to-one on $R\Sigma_m^k$ into $S(k-1,\RR)$.  For this note that if $J(r\sigma(p))$ and  $J(r\sigma(q))$ are the same, then the Euclidean distance matrices $((|p_i-p_j|))_{1\leq i\leq j\leq k-1}$ and $((|q_i-q_j|))_{1\leq i\leq j\leq k-1}$ are equal.  Since $p$ and $q$ are centered, by geometry this implies that $q_i = Ap_i (i=1,\cdots,k-1)$ for some $A\in O(m)$, i.e., $r\sigma(p) =r\sigma(q)$.  We omit the proof that the differential $d_pJ $ is also one-to-one. It follows that the embedding is equivariant with respect to a group action isomorphic to $O(k-1)$.

\begin{proposition}[\citep{abs2}]
 \label{rc1}
(a) The projection of $\tilde\mu$ into $J(R\Sigma^k_m)$ is given by
\begin{equation}\label{r26}
P_{J(R\Sigma^k_m)}(\tilde\mu) = \{ A \colon A = \sum_{j=1}^m (\lambda_j - \bar \lambda + \frac{1}{m}) U_j {U_j}' \}
\end{equation}
where $\lambda_1 \ge \ldots \ge \lambda_k$ are the ordered eigenvalues of $\tilde\mu$, $U_1, \ldots, U_k$ are corresponding orthonormal  (column) eigenvectors and $\bar \lambda = \frac{\sum_{j=1}^m \lambda_j}{m}$.
(b) The projection set is a singleton and $Q$ has a unique extrinsic mean $\mu_E$ iff $\lambda_m > \lambda_{m+1}$. Then $\mu_E = \sigma(F)$ where $F = (F_1, \ldots, F_m)'$, $F_j = \sqrt{\lambda_j - \bar\lambda + \frac{1}{m}}U_j$.
\end{proposition}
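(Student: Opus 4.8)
The plan is to realize the projection of $\tilde\mu$ as the minimizer of $\|A-\tilde\mu\|^2$ over (the closure of) $J(R\Sigma^k_m)$, to diagonalize everything by orthogonal invariance, and then to solve a small convex program, using nothing about $\tilde\mu$ beyond $\tilde\mu\succeq 0$ and $\trace\,\tilde\mu=1$. First I would pin down the feasible set: each $p\in\tilde NS^{m(k-1)-1}$ is an $m\times(k-1)$ matrix of unit Frobenius norm and full rank $m$, so $J(r\sigma(p))=p'p$ runs exactly over the symmetric positive semidefinite $(k-1)\times(k-1)$ matrices of rank $m$ and trace $1$ (given such an $A=\sum_{j\le m}s_jv_jv_j'$, $s_j>0$, $\sum_j s_j=1$, one takes $p$ with rows $\sqrt{s_j}\,v_j'$). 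Its closure is the compact set $\mathcal A_m=\{A\in S(k-1,\R):A\succeq 0,\ \mathrm{rank}\,A\le m,\ \trace\,A=1\}$, over which $\|A-\tilde\mu\|^2$ attains a minimum; I will locate the minimizer(s) on $\mathcal A_m$ and afterward check when they actually lie in $J(R\Sigma^k_m)$. Also $\tilde\mu=\int p'p\,Q(dp)$ inherits $\tilde\mu\succeq 0$ and $\trace\,\tilde\mu=1$.

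Next, since $\|A-\tilde\mu\|^2$ and $\mathcal A_m$ are invariant under $A\mapsto OAO'$, $O\in O(k-1)$, and $\tilde\mu\mapsto O\tilde\mu O'$ is such a conjugation, I may take $\tilde\mu=\mathrm{diag}(\lambda_1,\dots)$ with $\lambda_1\ge\lambda_2\ge\cdots\ge 0$, the conjugating columns being the eigenvectors $U_i$. Writing $\|A-\tilde\mu\|^2=\|A\|^2+\|\tilde\mu\|^2-2\,\trace(A\tilde\mu)$, I would invoke von Neumann's trace inequality: for $A\in\mathcal A_m$ with eigenvalues $s_1\ge\cdots\ge s_m\ge 0$ (the rest zero), $\sum_i s_i=1$, one has $\trace(A\tilde\mu)\le\sum_{i=1}^m s_i\lambda_i$, with equality exactly when $A=\sum_{i=1}^m s_iW_iW_i'$ for some orthonormal $W_1,\dots,W_m$ with $\tilde\mu W_i=\lambda_iW_i$; when $\lambda_m>\lambda_{m+1}$ this determines each $W_iW_i'$, and hence $A$ once the spectrum is fixed, which is the origin of the uniqueness dichotomy in (b). Thus the minimum over $\mathcal A_m$ reduces to choosing the spectrum $(s_i)$.

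It then remains to minimize, over $\{s_i\ge 0,\ \sum_{i=1}^m s_i=1\}$, the strictly convex quadratic $\sum_{i=1}^m s_i^2-2\sum_{i=1}^m s_i\lambda_i$. Its Lagrange stationary point on that affine slice is $s_i=\lambda_i-\bar\lambda+\tfrac1m$ with $\bar\lambda=\tfrac1m\sum_{i=1}^m\lambda_i$, and the non-negativity constraints turn out to be inactive because $m\,s_m=1-\sum_{i<m}(\lambda_i-\lambda_m)\ge 1-\sum_{i<m}\lambda_i\ge 1-\sum_i\lambda_i=0$ (here $\lambda_i\ge 0$ and $\trace\,\tilde\mu=1$). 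So the minimizer on $\mathcal A_m$ is $A^*=\sum_{j=1}^m(\lambda_j-\bar\lambda+\tfrac1m)U_jU_j'$, and letting the $U_j$ range over all admissible eigenvector choices gives exactly the set in (a).

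For (b) I would analyze the remaining eigenvector ambiguity. A tie among $\lambda_1,\dots,\lambda_m$ (or among $\lambda_{m+1},\dots$) is harmless: the $s_i$ are equal on a tied block, so $\sum s_iU_iU_i'$ is a scalar multiple of the basis-free spectral projector. The only genuine ambiguity is $\lambda_m=\lambda_{m+1}$: from $m\,s_m=1-\sum_{i<m}(\lambda_i-\lambda_m)$, $s_m=0$ forces $\lambda_m\le 0$, so if $\lambda_m>\lambda_{m+1}\ge 0$ then $s_m>0$, every coefficient in $A^*$ is positive, $A^*$ has rank exactly $m$, hence $A^*\in J(R\Sigma^k_m)$ and $A^*=J(\mu_E)$; whereas if $\lambda_m=\lambda_{m+1}$ then either $s_m>0$ and replacing $U_m$ by another unit vector of the $(\lambda_m)$-eigenspace changes $A^*$, or $s_m=0$ and $A^*$ has rank $<m$ so is not in $J(R\Sigma^k_m)$ — in either case $Q$ has no unique extrinsic mean, giving the stated iff. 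Finally, with $F=(F_1,\dots,F_m)'$, $F_j=\sqrt{\lambda_j-\bar\lambda+\tfrac1m}\,U_j$, one checks $F'F=A^*$ and $\|F\|^2=1$, so $\mu_E=r\sigma(F)$. The hard part is the trace-inequality step — that $\trace(A\tilde\mu)$ over $A\in\mathcal A_m$ is maximized precisely when $A$ shares an ordered eigenbasis with $\tilde\mu$ — together with the bookkeeping of repeated eigenvalues; the rest is a routine constrained quadratic minimization whose only subtlety, the non-negativity check, is exactly where $\trace\,\tilde\mu=1$ enters.
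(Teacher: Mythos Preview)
The paper does not actually supply a proof of this proposition; immediately after the statement it only says ``For a detailed proof see \cite{abs2}, or \cite{rabibook}, pp.~114, 115.'' So there is no in-paper argument to compare against. Your proof is correct and is the natural one: it is the rank-$m$ analogue of the short argument the paper \emph{does} give for Proposition~\ref{prop-3.1} (the planar case $\Sigma_2^k$), where one diagonalizes $\mu_J$ and expands $\|w^*w-\mu_J\|^2$ directly. In the rank-$1$ case the spectral optimization is trivial (put all the mass on the top eigenvector); in your rank-$m$ setting this step splits into (i) the von~Neumann trace inequality to align eigenframes, and (ii) the small quadratic program on the simplex to pick the spectrum. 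Both are handled cleanly, including the point that $\trace\,\tilde\mu=1$ and $\tilde\mu\succeq 0$ are exactly what make the Lagrange solution feasible ($s_m\ge 0$), and your case analysis for the uniqueness dichotomy at $\lambda_m=\lambda_{m+1}$ (either $s_m>0$ and the eigenvector ambiguity moves $A^*$, or $s_m=0$ and the minimizer drops out of $J(R\Sigma^k_m)$) is complete. One cosmetic remark: the proposition as stated in the paper writes $\lambda_1\ge\cdots\ge\lambda_k$, but since $\tilde\mu\in S(k-1,\RR)$ there are only $k-1$ eigenvalues; your write-up silently uses the correct count, which is fine.
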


For a detailed proof see \cite{abs2}, or \cite{rabibook}, pp. 114, 115.

        For $m>2$, a \emph{size-and-reflection shape } $sr\sigma(z)$ of a Helmertized $k$-ad $z$ in $\RR^m $ of full rank $m$ is given by its orbit under the group $O(m)$.  The space of all such shapes is the size-and-reflection shape space $SR\Sigma_m^k$ .  An $O(k-1)$-equivariant embedding of  $SR\Sigma_m^k$  into $S(k-1,\RR)$ is :  $J(sr\sigma(z)) = z'z/|z|$.

\subsection{  Affine Shape Space $A\Sigma_m^k$}

  Let $k> m+1$. Consider the set of all $k$-ads in $\RR^m$, with full rank $m$ as $m\times k$ matrices. The affine shape of a $k$-ad $x $ may be identified with its orbit under all affine transformations:
\begin{equation}\label{eq-6.13}
\sigma(x) = \{ Ax +c:  A\in GL(m,\RR), c\; \text{an} \; m\times k \;\text{matrix}
\}.
\end{equation}
If the $k$-ad is centered as $u= x - <\bar{x}>$, then the affine shape of $x$, or of $u$, is given by
\begin{equation}
\label{eq-6.14}
 \sigma(x) = \sigma(u) = \{Au: A\in GL(m,\RR)\},\;   (u\;\text{centered}\; k\text{-ad} \;\text{ of rank } \;m).
\end{equation}
The space of all such affine shapes is the \emph{affine shape space}  $A\Sigma_m^k$ .  Note that two Helmertized $k$-ads $u$ and $v$ (as $m\times(k-1)$ matrices of full rank) have the same shape if and only if the rows of $u$ and $v$ span the same m-dimensional subspace of $\RR^{k-1}$ . Hence  we can identify $A\Sigma_m^k$  with the Grasmannian $G_m(k-1)$, namely, the set of all
$m$-dimensional subspaces of $\RR^{k-1}$ \citep{sparr}.  For the Grassmann manifold, refer to \cite{boothby}, pp. 63, 168, 362, 363.  For extrinsic analysis on $A\Sigma_m^k\simeq G_m(k-1)$, consider the embedding of $A\Sigma_m^k$ into $S(k-1,\RR)$ given by
\begin{equation}\label{eq-6.15}
J(\sigma(u)) = FF',
\end{equation}
where $F =(f_1\cdots f_m)$  is a $(k-1)\times m$ matrix and $\{f_1,\cdots,f_m\}$ is an orthonormal basis of the $m$-dimensional subspace $L$, say, of $\RR^{k-1}$ spanned by the rows of $u$.  Note that the $(k-1)\times(k-1) $matrix $FF'$ is idempotent and is the matrix of orthogonal projection of $ \RR^{k-1}$ onto $L$. It is independent of the orthonormal basis chosen. The embedding is $O(k-1)$-equivariant under the group action $\sigma(u)\rightarrow \sigma(uO)$  $(O\in O(k-1))$ on $A\Sigma_m^k$, with $O(k-1)$ acting on $S(k,\RR)$ by $A\rightarrow OAO'$.

\begin{proposition}\citep{suga}\label{paf1}
The projection of $\tilde\mu$ into $J(A\Sigma^k_m)$ is given by
\begin{equation} \label{af2.1}
P(\tilde\mu) = \left\{ \sum_{j=1}^m U_j U_j' \right\}
\end{equation}
where $U = (U_1, \ldots, U_k) \in SO(k)$ is such that $\tilde\mu = U \Lambda U'$, $\Lambda = \mathrm{Diag}(\lambda_1,\ldots, \lambda_k)$,
$\lambda_1 \ge \ldots \ge \lambda_k = 0$.
The extrinsic mean $\mu_E$ exists if and only if $\lambda_m>\lambda_{m+1}$, and then $\mu_E = \sigma(F')$ where $F = ( U_1, \ldots, U_m )$.
\end{proposition}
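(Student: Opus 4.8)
The plan is to deduce Proposition \ref{paf1} from Theorem \ref{th-2.2}. First I would verify the hypotheses of that theorem. The image $J(A\Sigma_m^k)$ under the map \eqref{eq-6.15} is exactly the set of rank-$m$ orthogonal projection matrices, $\{P\in S(k-1,\R):P=P'=P^2,\ \trace P=m\}$: a Helmertized centered $k$-ad of rank $m$ is an arbitrary $m\times(k-1)$ matrix of rank $m$, so its row span ranges over all $m$-dimensional subspaces of $\R^{k-1}$, and $J$ records the orthogonal projection onto that span. These are closed conditions, and $\|P\|^2=\trace(P^2)=\trace P=m$, so $J(A\Sigma_m^k)$ is closed and bounded, hence $J$ is a closed embedding; boundedness of the image also makes the Fr\'echet function of $Q_J=Q\circ J^{-1}$ automatically finite. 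By Theorem \ref{th-2.2} it then suffices to find the point(s) of $J(A\Sigma_m^k)$ closest, in the Euclidean norm $\|B\|^2=\langle B,B\rangle=\trace(BB')$ on $S(k-1,\R)$, to the Euclidean mean $\tilde\mu$ of $Q_J$.

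The key move is that $\|P\|^2=m$ is constant on $J(A\Sigma_m^k)$, so
\[
\|P-\tilde\mu\|^2=m-2\langle P,\tilde\mu\rangle+\|\tilde\mu\|^2,
\]
and minimizing the left side over rank-$m$ projections is the same as maximizing $\langle P,\tilde\mu\rangle=\trace(P\tilde\mu)$. Diagonalize $\tilde\mu=U\Lambda U'$ with $U\in SO(k-1)$, $\Lambda=\mathrm{Diag}(\lambda_1,\dots,\lambda_{k-1})$, $\lambda_1\ge\cdots\ge\lambda_{k-1}$, and put $G=U'F$ where $F$ is $(k-1)\times m$ with $F'F=I_m$ and $P=FF'$; then $G'G=I_m$ and $\trace(P\tilde\mu)=\trace(GG'\Lambda)=\sum_i\lambda_i\,(GG')_{ii}$. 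Since $GG'$ is again a rank-$m$ orthogonal projection, the numbers $t_i=(GG')_{ii}$ lie in $[0,1]$ and sum to $m$, so $\sum_i\lambda_i t_i\le\lambda_1+\cdots+\lambda_m$, the maximum of a linear functional over the polytope $\{t:0\le t_i\le1,\ \sum_i t_i=m\}$, attained at $t=(1,\dots,1,0,\dots,0)$, i.e.\ by $F$ with columns $U_1,\dots,U_m$ and $P=\sum_{j=1}^m U_jU_j'$. This yields the form \eqref{af2.1}, and the affine shape carried by this $P$ is $\sigma(F')$ since the rows of $F'=(U_1,\dots,U_m)'$ span exactly $\mathrm{span}\{U_1,\dots,U_m\}$. (In the centered $\R^k$ formulation used in the statement, $\tilde\mu$ annihilates $\mathbf 1=(1,\dots,1)'$, so the least eigenvalue $\lambda_k=0$; when $\lambda_m>\lambda_{m+1}$ one has $\lambda_m>0$, so $U_1,\dots,U_m$ are eigenvectors for positive eigenvalues, hence orthogonal to $\mathbf 1$, and $\sum_{j\le m}U_jU_j'$ indeed lies in $J(A\Sigma_m^k)$.)

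For part (b) I would read off both directions from this picture. If $\lambda_m>\lambda_{m+1}$, then $t=(1,\dots,1,0,\dots,0)$ is the \emph{unique} maximizer over the polytope: if $t_i<1$ for some $i\le m$, then $t_j>0$ for some $j>m$, and transferring mass from $j$ to $i$ strictly increases the objective because $\lambda_i-\lambda_j\ge\lambda_m-\lambda_{m+1}>0$, contradicting optimality. Moreover $(GG')_{ii}=1$ for $i\le m$ together with $0\preceq I-GG'$ forces rows and columns $1,\dots,m$ of $I-GG'$ to vanish, and $(GG')_{ii}=0$ for $i>m$ together with $GG'\succeq0$ forces rows and columns $m+1,\dots,k-1$ of $GG'$ to vanish; hence $GG'=\mathrm{Diag}(1,\dots,1,0,\dots,0)$ and $P=\sum_{j\le m}U_jU_j'$ is the unique minimizer, so $\mu_E$ exists and equals $\sigma(F')$. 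Conversely, if $\lambda_m=\lambda_{m+1}$, letting two distinct orthonormal vectors in the $\lambda_m$-eigenspace play the roles of $U_m$ and $U_{m+1}$ produces a second minimizer $\sum_{j\le m-1}U_jU_j'+U_{m+1}U_{m+1}'$, so $\mu_E$ does not exist; this also identifies the projection set in part (a) as precisely $\{\sum_{j=1}^m U_jU_j'\}$ over all orthogonal $U$ diagonalizing $\tilde\mu$, a singleton exactly when $\lambda_m>\lambda_{m+1}$. The one step needing care is this reduction — observing that optimality of $P$ depends only on the diagonal of $GG'$ and using positive semidefiniteness to pin $GG'$ down — which is the Ky Fan extremal characterization of partial sums of eigenvalues in disguise; everything else is bookkeeping.
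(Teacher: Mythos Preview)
Your argument is correct. The paper does not give its own proof here but defers to \cite{rabibook}, pp.~140--141; the approach there is the same trace-maximization/Ky Fan argument you carry out: reduce $\|P-\tilde\mu\|^2$ to maximizing $\trace(P\tilde\mu)$ using $\|P\|^2=m$, diagonalize $\tilde\mu$, and bound $\sum_i\lambda_i(GG')_{ii}$ by $\lambda_1+\cdots+\lambda_m$ via the constraints $0\le(GG')_{ii}\le1$, $\sum_i(GG')_{ii}=m$. Your handling of uniqueness---pinning down $GG'$ from its diagonal using positive semidefiniteness of $GG'$ and $I-GG'$---is clean, and your parenthetical reconciling the $(k-1)$-dimensional Helmertized embedding with the $k$-dimensional centered formulation in the statement (where $\tilde\mu\mathbf 1=0$ forces $\lambda_k=0$ and keeps $U_1,\dots,U_m\perp\mathbf 1$) correctly addresses a notational wrinkle in the proposition as stated.
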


For a proof see \cite{rabibook}, pp. 140, 141.

\subsection{ Projective Shape Space $P\Sigma_m^k$}

First, recall that the real projective space $\RR P^m$ is the space of all lines through the origin in $\RR^{m+1}$.  Its elements are  $[p]= \{\lambda p: \lambda \in \RR\backslash \{0\} \}$ for all $p \in  \RR^{m+1}\backslash \{ 0_{R^{m+1}}\}$. It is also conveniently represented as the quotient $S^m /G$ where $G$ is the two-point group $\{e, -e\}$, e being the identity map and $-ep = -p$ ($p \in S^m)$. That is, a line through $p$  is identified with $\{p/|p|, - p/|p|\}$ ( $p \in  \RR^{m+1}\backslash \{ 0_{R^{m+1}}\})$.  As a consequence, there is a unique Riemannian metric tensor on $\RR P^m  = S^m/G$ such that $p\rightarrow \{p,-p\}$ is a Riemannian submersion, with  $\langle u,v\rangle _{\RR P^m}  = u'v$ for all vectors $u$, $v$ in $T_{[p]}\RR P^m$.  The geodesic distance is given by $\rho_g([p],[q]) =  \arccos (|p'q|)\in [0,\pi/2]$, and the cut locus of $[p]$ is $Cut([p]) = \{[q]: \cos (|p'q|) =\pi/2\}$, so that the injectivity radius of $\RR P^m$ is $\pi/2$. Its sectional curvature is constant $+1$ (as it is of $S^m$).  The exponential map of $T_{[p]}\RR P^m$ (and its inverse on $\RR P^m \backslash (Cut([p]) )$ can be easily expressed in terms of those for the sphere $S^m$.  We will use [ ] for both representations.

The so-called \emph{Veronese-Whitney embedding} of $\RR P^m$ into $S(m+1, \RR)$ is  given by
\begin{equation}\label{eq-6.16}
 J([p]) = pp^t,  \;(p = (p_1,\cdots,p_{m+1})' \in S^m).
 \end{equation}
It is clearly $O(m+1)$-equivariant, with the group action on $\RR P^m$ as : $A[p] =[Ap]$ $(A \in O(m+1))$.

Turning to landmarks based projective shapes, assume $k> m+2$.  \emph{A frame of $\RR P^m$}  is a set of $ m+2$ ordered points $([p_1],\cdots,[p_{m+2}])$ such that every subset of $m+1$ of these points spans  $\RR P^m$  , i.e., every subset of $m+1$ points of $\{p_1,\cdots,p_{m+2}\}$ spans $\RR^{m+1}$. The \emph{standard frame} of  $\RR P^m$  is $([e_1], [e_2],\cdots,[e_{m+1}], [e_1+e_2+\cdots+e_{m+1}])$, where $e_i$  $(\in \RR^{m+1})$  has 1 in the ith position and zeros elsewhere.  A $k$-ad $y= (y_1,\cdots,y_k) = ([p_1],\cdots,[p_k])  \in( \RR P^m)^k$  is in \emph{general position} if there exist $i_1<i_2<\cdots<i_{m+2}$ such that  $(y_{i_1},\cdots,y_{i_{m+2}})$ is a  frame of  $\RR P^m$.   A \emph{projective transformation} $\alpha$ on  $\RR P^m$  is defined by
\begin{equation}\label{eq-6.17}
\alpha[p] = [Ap], \;  (p \in \RR^{m+1}\backslash\{0\})
\end{equation}
where $A \in GL(m+1,\RR)$.  The usual operation of matrix multiplication on $GL(m+1,\RR)$ then leads to a corresponding group of projective transformations on  $\RR P^m$.  This is the \emph{projective group} $PGL(m)$. Note that, for a given $A$ in $GL(m+1,\RR$),  $cA$ determines the same element of $PGL(m$) for all $c\neq 0$.  The \emph{projective shape} of a $k$-ad $y = (y_1,\cdots,y_k) = ([p_1],\cdots,[p_k])  \in (  \RR P^m )^k$  in general position is its orbit under $PGL(m)$:
\begin{align}
\label{eq-6.18}
 \sigma(y)& =\left\{ {\alpha y \equiv (\alpha[p_1], \cdots,\alpha[p_k]):
\alpha \in PGL(m)}\right\}, \; \\ \nonumber
(y &= ([p_1],\cdots,[p_k]\;\text{ in general position}).
\end{align}

The \emph{projective shape space} $PG\Sigma_m^k$ is the set of all projective shapes of $k$-ads in general position. Following \cite{mardiavic} and \cite{Patrangenaru2010}, we will consider a particular dense open subset of $PG\Sigma_m^k$. Fix a set of $m+2$ indices $I=\{i_j: j=1,\cdots,m+2\}$, $1\leq i_1<i_2<\cdots<i_{m+2}\leq k$. Define  $PG_I\Sigma_m^k$ as the set of shapes $\sigma(y)$  in $PG\Sigma_m^k$ , $y= (y_1,\cdots,y_k) = ([p_1],\cdots,[p_k]),$ such that every subset  of $m+1$ points of  $\{[p_{i_j}], j=1,\cdots,m+2\}$ spans $\RR P^m$.

The shape space $PG_I\Sigma_m^k$  (with $I=\{1,2,\cdots,m+2\}$) may be identified with $( \RR P^m)^{k -m-2}$ (See \cite{mardiavic}).

\section{ Asymptotic Distribution Theory for Fr\'echet Means.}
  
  This section is devoted to the asymptotic distribution theory of sample Fr\'echet means, which lies at the heart of statistical inference based on Fr\'echet means.  We first present a result which is broadly applicable to distributions on manifolds as well as more general locally Euclidean spaces such as \emph{stratified spaces}.  The basic idea behind it is rather simple. Suppose a probability $Q$ on a metric space $(S,\rho)$ has a Fr\'echet mean $\mu$.  Assume also the sample Fr\'echet mean $\mu_n$ converges to it (a.s. or in probability), which is true in particular under the topological assumption \eqref{eq-compact}. If, in local coordinates, $\mu$ and $\mu_n$ are expressed as $\nu$ and $\nu_n$ in an open subset of $\R^s$ for some $s$, then the Fr\'echet function $F_n$ of $Q_n$, expressed in local coordinates as $\tilde F_n$, say, satisfies a first order condition: $\text{grad}\; \tilde F_n(\nu_n) =0$. A taylor expansion of the left side around $\nu$, one expresses $\nu_n - \nu$ approximately as  $-\Delta^{-1}(\nu)\text{grad} \tilde F_n(\nu)$, where $\Delta$ is the Hessian of $\tilde F$  at $\nu$. Since grad $\tilde F_n(\nu)$ is the average of $n$  $s$-dimensional i.i.d. random vectors, the classical CLT is applied to show that $\sqrt{n}[\nu_n-\nu]$ is asymptotically normal. Here is the precise statement. For a detailed proof see \cite{linclt}, Theorem 3.3.  A slightly weaker version appears in \cite{clt13}.

Let $(S, \rho)$ be a metric space and $Q$  a probability measure on its Borel $\sigma$-field.  As before, define the \emph{Fr\'echet function} of $Q$ as
\begin{equation}
\label{eq-frechet}
	F(p) = \int \rho^2(p,q) Q(dq) \;  (p \in S).
\end{equation}
Assume that $F$ is finite on $S$ and has a unique minimizer $\mu = \argmin_p F(p)$. Then $\mu$ is called the \emph{Fr\'echet mean} of $Q$ (with respect to the distance $\rho$).  Under broad conditions, the Fr\'echet  sample mean $\mu_n$ of the empirical distribution $Q_n= \dfrac{1}{n}\sum_{j=1}^n \delta_{Y_j}$ based on independent $S$-valued random variables $Y_j$ ($j=1,\ldots, n$) with common distribution $Q$ is a consistent estimator of $\mu$. That is, $\mu_n\rightarrow \mu$ almost surely, as $n\rightarrow \infty$.  Here $\mu_n$ may be taken to be any measurable selection from the (random) set of minimizers of the Fr\'echet  function of $Q_n$, namely, $F_n(p) = \dfrac{1}{n}\sum_{j=1}^n  \rho^2(p,Y_j)$ (See Theorem \ref{th-2.1}).


The following assumptions are used in the proof of Theorem \ref{th-clt}.

\begin{itemize}
\item [(A1)] The Fr\'echet mean $\mu$ of $Q$ is unique.

\item [(A2)] $\mu\in G$,  where $G$ is a measurable subset of $S$, and there is a homeomorphism $\phi : G\rightarrow U$, where $U$ is  an open subset of $\mathbb R^s$ for some $s\geq 1$ and $G$  is given its relative topology on $S$.  The function 
\begin{equation}
\label{eq-asa2}
x\mapsto h(x;q) : = \rho^2(\phi^{-1}(x),q)\;
\end{equation}
is twice continuously differentiable on $U$, for every $q$  outside a $Q$-null set.
\item[(A3)]  $P(\mu_n\in G)\rightarrow 1$ as $n\rightarrow \infty$.

\item[(A4)]  Let $D_rh(x;q) = \partial h(x;q)/\partial x_r, r=1,\ldots,s$. Then
\begin{equation}
	E | D_rh (\phi(\mu);Y_1)|^2<\infty, \; E |D_{r,r^\prime} h(\phi(\mu);Y_1)|<\infty \;\text{for}\; r,r^\prime =1,\ldots, s.
\end{equation}
\item [(A5)]  Let $u_{r,r^\prime}(\epsilon;q)= \sup\{| D_{r,r^\prime} h(\theta;q) - D_{r,r^\prime} h(\phi (\mu);q)| : |\theta -\phi(\mu)|<\epsilon\}$.  Then
\begin{equation}
	E |u_{r,r^\prime}(\epsilon;Y_1)|\rightarrow 0 \;\text{as}\; \epsilon\rightarrow 0\;\text{for all}\; 1\leq r, r^\prime\leq s.
\end{equation}
\item [(A6)]  The matrix   $\Lambda= [E D_{r,r^\prime} h(\phi(\mu);Y_1)]_{r,r^\prime=1,\ldots,s }$ is nonsingular.
\end{itemize}

\begin{remark}
\label{rem-2.1}
Observe that $Eh(x, Y_1)=F(\phi^{-1}(x))=ED_rh(x,Y_1)=D_rF(\phi^{-1}(x))$, $1\leq r\leq s$, $x\in U$. Also, $ED_rh(\phi (\mu), Y_1)=D_rF(\phi^{-1}(x))\mid_{x=\phi(\mu)}=0$, $1\leq r\leq s$, since $F(\phi^{-1}(x))$ attains a minimum at $x=\phi(\mu)$.
\end{remark}

\begin{theorem}[\citep{linclt}]
\label{th-clt}
Under assumptions (A1)-(A6) ,f
\begin{equation}
\label{eq-maineq}
	n^{1/2}[ \phi(\mu_n)-\phi(\mu)] \xrightarrow{\mathcal{L}}  N(0, \Lambda^{-1}C \Lambda^{-1}), \;\text{as}\;  n\rightarrow \infty,
\end{equation}
where  $C $ is the covariance matrix of $\{D_rh(\phi(\mu);Y_1), r=1,\ldots,s\}$.
\end{theorem}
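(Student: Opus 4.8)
The plan is to carry out the M-estimation argument sketched just before the statement, making each step rigorous. Throughout write $\nu=\phi(\mu)$, $\nu_n=\phi(\mu_n)$, $h(x;q)=\rho^2(\phi^{-1}(x),q)$ as in (A2), so that the Fr\'echet function of $Q_n$ read in the chart is $\tilde F_n(x)=\frac1n\sum_{j=1}^n h(x;Y_j)$, with population version $\tilde F(x)=Eh(x;Y_1)=F(\phi^{-1}(x))$. By Theorem \ref{th-2.1} we already have $\mu_n\to\mu$ a.s., and by (A3) the event $E_n:=\{\mu_n\in G\}$ has $P(E_n)\to1$. On $E_n$, since $\mu_n$ minimizes $F_n$ over all of $S\supseteq G$, the point $\nu_n$ is a global minimizer of $\tilde F_n$ over the open set $U\subseteq\R^s$; and since each $h(\cdot;Y_j)$ is $C^2$ on $U$ for a.e.\ sample (by (A2) and the fact that the $Y_j$ are i.i.d.\ $\sim Q$), $\tilde F_n$ is $C^2$ on $U$. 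Hence on $E_n$ the first-order condition holds: $D_r\tilde F_n(\nu_n)=\frac1n\sum_{j=1}^n D_rh(\nu_n;Y_j)=0$ for $r=1,\dots,s$.

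Next I would Taylor-expand $D_r\tilde F_n$ about $\nu$ using the integral form of the remainder. Setting $A_n^{(r,r')}:=\int_0^1 \frac1n\sum_{j=1}^n D_{r,r'}h\bigl(\nu+t(\nu_n-\nu);Y_j\bigr)\,dt$, the identity $D_r\tilde F_n(\nu_n)=0$ becomes, on $E_n$,
\[
0=\frac1n\sum_{j=1}^n D_rh(\nu;Y_j)+\sum_{r'=1}^s A_n^{(r,r')}(\nu_n-\nu)_{r'},\qquad r=1,\dots,s,
\]
hence $\sqrt n\,(\nu_n-\nu)=-A_n^{-1}\,n^{-1/2}\sum_{j=1}^n g_j$ whenever $A_n$ is invertible, where $g_j:=\bigl(D_1h(\nu;Y_j),\dots,D_sh(\nu;Y_j)\bigr)'$. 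By Remark \ref{rem-2.1}, $Eg_1=0$ since $x\mapsto F(\phi^{-1}(x))$ has a minimum at $\nu$; by (A4) the $g_j$ are i.i.d.\ with finite covariance $C$, so the classical multivariate CLT gives $n^{-1/2}\sum_{j=1}^n g_j\xrightarrow{\mathcal L}N(0,C)$.

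The crux is to show $A_n\to\Lambda$ in probability. I would write
\begin{align*}
A_n^{(r,r')}-\Lambda_{r,r'}&=\Bigl(\frac1n\sum_{j=1}^n D_{r,r'}h(\nu;Y_j)-\Lambda_{r,r'}\Bigr)\\
&\quad+\int_0^1\frac1n\sum_{j=1}^n\bigl[D_{r,r'}h(\nu+t(\nu_n-\nu);Y_j)-D_{r,r'}h(\nu;Y_j)\bigr]\,dt.
\end{align*}
The first bracket tends to $0$ a.s.\ by the SLLN, using $E|D_{r,r'}h(\nu;Y_1)|<\infty$ from (A4). For the second term, fix $\epsilon>0$: on $\{|\nu_n-\nu|<\epsilon\}$ its absolute value is at most $\frac1n\sum_{j=1}^n u_{r,r'}(\epsilon;Y_j)$ (because $|t(\nu_n-\nu)|\le|\nu_n-\nu|<\epsilon$ for $t\in[0,1]$), which converges a.s.\ to $Eu_{r,r'}(\epsilon;Y_1)$ by the SLLN (finite for small $\epsilon$ by (A5); measurability of $u_{r,r'}(\epsilon;\cdot)$ is routine). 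Since $\mu_n\to\mu$ a.s.\ forces $P(|\nu_n-\nu|<\epsilon)\to1$, letting $\epsilon\downarrow0$ and invoking (A5) gives $A_n\to\Lambda$ in probability. Because $\Lambda$ is nonsingular by (A6), $A_n$ is invertible with probability tending to $1$ and $A_n^{-1}\to\Lambda^{-1}$ in probability; Slutsky's theorem applied to $\sqrt n\,(\nu_n-\nu)=-A_n^{-1}\,n^{-1/2}\sum_j g_j$ then yields convergence in law to $-\Lambda^{-1}N(0,C)=N(0,\Lambda^{-1}C\Lambda^{-1})$, using symmetry of $\Lambda^{-1}$. This is \eqref{eq-maineq}.

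I expect the genuine obstacle to be precisely the convergence $A_n\to\Lambda$: the Hessian is being averaged at the random, sample-dependent argument $\nu+t(\nu_n-\nu)$, and one must upgrade the pointwise SLLN at the fixed point $\nu$ to uniform control over a shrinking neighborhood. Assumptions (A3) and (A5) are exactly what make this possible --- (A3) keeps $\nu_n$ inside the chart so the first-order condition is even available, while the integrated modulus-of-continuity bound (A5) dominates the increment of the Hessian uniformly. The remaining points (measurability of the selection $\mu_n$ and of $u_{r,r'}$, and the a.s.\ $C^2$ regularity of $\tilde F_n$) are handled just as in Theorem \ref{th-2.1}.
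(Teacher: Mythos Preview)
Your proposal is correct and follows essentially the same M-estimation argument as the paper's proof: first-order condition at $\nu_n$, Taylor expansion of the gradient about $\nu$, convergence of the random Hessian to $\Lambda$ via the modulus-of-continuity bound (A5) combined with $\nu_n\to\nu$, and then Slutsky plus the classical CLT. The only cosmetic difference is that you use the integral form of the remainder, $A_n^{(r,r')}=\int_0^1\frac1n\sum_j D_{r,r'}h(\nu+t(\nu_n-\nu);Y_j)\,dt$, whereas the paper uses the componentwise mean-value form with intermediate points $\theta_{n,r,r'}$ on the segment $[\nu,\nu_n]$; both are bounded by the same quantity $\frac1n\sum_j u_{r,r'}(\epsilon;Y_j)$ on $\{|\nu_n-\nu|<\epsilon\}$, so the analysis is identical.
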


\begin{proof}
 The function $x\rightarrow F_n(\phi^{-1}x) = \dfrac{1}{n}\sum_{j=1}^n h(x,Y_j)$ on $U$ attains a minimum at  $\phi(\mu_n) \in U$ for all sufficiently large $n$ (almost surely).  For all such $n$ one therefore has the first order condition
\begin{equation}
\label{eq-firstorder}
\nabla \; F_n(\phi^{-1} \nu_n) = \dfrac{1}{n} \sum_{j=1}^n \nabla \; h(\nu_n,Y_j)  =0,
\end{equation}
where $\nu= \phi( \mu)$, $\nu_n= \phi( \mu_n)$ (column vectors in $U$). Here $\nabla$ is the gradient $(D_1,\ldots, D_r).$  A Taylor expansion yields
\begin{equation}
\label{eq-07}
	0 =  \dfrac{1}{n}\sum_{j=1}^n \nabla \;h(\nu_n,Y_j)  = \dfrac{1}{n}\sum_{j=1}^n \nabla \;h(\nu,Y_j)  +  \Lambda_n (\nu_n-\nu)
\end{equation}
where $\Lambda_n$  is the $s\times s$ matrix given by
\begin{equation}
\label{eq-08}
	\Lambda_n = \dfrac{1}{n}\sum_{j=1}^n [D_{r,r^\prime}h(\theta_{n,r,r^\prime},Y_j)]_{r,r^\prime=1,\ldots,s},
\end{equation}
and $\theta_{n, r, r^\prime}$ lies on the line segment joining $\nu_n$ and $\nu$.  We will show that
\begin{equation}
\label{eq-09}
	\Lambda_n \rightarrow \Lambda \;\text{in probability},\; \text{ as } n\rightarrow \infty.
\end{equation}
Fix  $r,r^\prime\in \{1,\ldots,s\}$.  For $\delta>0$, write $E u_{r,r^\prime} (\delta,Y_1) = \gamma(\delta)$. There exists $ n=n(\delta)$ such that  $ P(|\nu_n-\nu|>\delta)<\delta$ for $n>n(\delta).$  Now
\begin{align}
	E\big| [ \dfrac{1}{n}\sum_{j=1}^n D_{r,r^\prime}h(\nu_n,Y_j)- \dfrac{1}{n}\sum_{j=1}^n D_{r,r^\prime}h(\nu,Y_j)]\cdot 1_{[|\nu_n-\nu|\leq \delta]}\big|&\nonumber\leq E  \dfrac{1}{n}\sum_{j=1}^n u_{r,r^\prime} (\delta,Y_j)\\ \nonumber
 &= E u_{r,r^\prime} (\delta,Y_1) = \gamma(\delta) \rightarrow 0
\end{align}
 as  $\delta\rightarrow 0$.
Hence, by Chebyshev's inequality for first moments, for $n> n(\delta) $ one has for every $\epsilon>0$,
\begin{equation}
\label{eq-11}
	P(\big |  \dfrac{1}{n}\sum_{j=1}^n D_{r,r^\prime}h(\nu_n,Y_j)- \dfrac{1}{n}\sum_{j=1}^n D_{r,r^\prime}h(\nu,Y_j)\big|> \epsilon) \leq  \delta + \gamma(\delta)/\epsilon \rightarrow 0\;\text{ as}\;  \delta\rightarrow 0.
\end{equation}
This shows that
\begin{equation}
\label{eq-12}
	\big[ \dfrac{1}{n}\sum_{j=1}^n D_{r,r^\prime} h(\nu_n,Y_j)-  \dfrac{1}{n}\sum_{j=1}^n D_{r,r^\prime}h(\nu,Y_j)\big] \rightarrow 0;\text{ in probability as}\; n\rightarrow \infty.
\end{equation}
Next, by the strong law of large numbers,
\begin{equation}
\label{eq-13}
	 \dfrac{1}{n}\sum_{j=1}^n  D_{r,r^\prime}h(\nu,Y_j) \rightarrow  ED_{r,r^\prime}h(\nu,Y_1)\;\text{ almost surely,}\;\text{ as}\; n\rightarrow \infty.
\end{equation}
Since \eqref{eq-11} -- \eqref{eq-13} hold for all $r$,$r^\prime$, \eqref{eq-09} follows.  The set of symmetric $s\times s$ positive definite matrices is open in the set of all $s\times s$ symmetric matrices, so that \eqref{eq-09} implies that $\Lambda_n$ is nonsingular with probability going to 1 and  $\Lambda_n^{-1}\rightarrow \Lambda^{-1}$ in probability, as $n\rightarrow \infty$.  Note that $E\nabla h(\nu, Y_1)=0$ (see Remark \ref{rem-2.1}). Therefore, using (A4),  by the classical CLT and Slutsky's  Lemma, \eqref{eq-07} leads to
\begin{equation}
\label{eq-14}	\sqrt{n}(\nu_n-\nu) = \Lambda_n^{-1}[-(1/\sqrt{n})  \dfrac{1}{n}\sum_{j=1}^n \nabla\; h(\nu,Y_j)]  \xrightarrow{\mathcal{L}} N(0, \Lambda^{-1}C\Lambda^{-1}),
\end{equation}
as $ n\rightarrow \infty$.
\end{proof}

  For the case of the \emph{extrinsic mean}, let $M$ be a $d$-dimensional differentiable manifold, and $J: M\rightarrow  E^N$ an embedding of $M$ into an $N$-dimensional Euclidean space. Assume that $J(M)$ is closed in $E^N$, which is always the case, in particular,  if $M$ is compact. The extrinsic distance $\rho_{E,J}$ on $M $ is defined as $\rho_{E,J}(p,q) = |J(p)-J(q)|$ for $p,q \in M$, where  $|\cdot|$ denotes the Euclidean norm of $E^N$. The image $\mu$ in $J(M)$ of the \emph{extrinsic mean} $\mu_{E,J }$ is then given by $\mu= P(m)$, where $m$ is the usual mean of $ Q\circ J^{-1}$ thought of as a probability on the Euclidean space $E^N$, and $P$ is the orthogonal projection defined on an $N$-dimensional neighborhood $V$ of $m$ into $J(M)$ minimizing the Euclidean distance between $p\in V$ and $J(M)$.  If the projection $P$ is unique on $V $ then the projection $\mu_n= P(m_n)$ of the Euclidean mean $m_n = \sum_{j=1}^n J(Y_j)/n$ on $J(M)$ is, with probability tending to one as $n\rightarrow \infty$, unique and lies in an open neighborhood $G$ of $\mu =P(m)$ in $J(M)$.  Theorem \ref{th-clt} immediately implies the following result of \cite{rabi03} (Also see \cite{rabibook}, Proposition 4.3).
  Assume that $P$ is uniquely defined in a neighborhood of the $N$-dimensional Euclidean mean $m$ of $Q\circ J^{-1}$.  Let $\phi$ be a diffeomorphism on a neighborhood $G$  of $\mu= P(m)$ in $J(M)$ onto an open set $ U$ in $\mathbb R^d$. Then, using the notation of \eqref{eq-maineq},
  \begin{equation*}
  \sqrt{n} \left[\phi(\mu_n)-\phi(\mu) \right]= \sqrt{n}\left[\phi(P(m_n))- \phi( P(m))\right] \xrightarrow{\mathcal{L}} N(0, \Lambda^{-1}C\Lambda^{-1}), \;\text{as}\; n\rightarrow \infty.
  \end{equation*}
  


One may, in particular, choose $(U,\phi)$ to be a coordinate neighborhood of $\mu=P(m)$ in $J(M)$. In \cite{rabi03}, however, $\phi$ is chosen to be the linear orthogonal projection on $G$ into the tangent space $T_{\mu} J(M)$.

For a more computable expression  of the limit, let $X_j$, $1\leq j\leq n$, be i.i.d, $M$-valued observations with common distribution $Q$, and $Y_j=J(X_j)$, $1\leq j\leq n$. In a neighborhood $V$ of $m$, the differential $d_yP$ maps $T_jE^N\approx E^N$. One expresses $d_{P(m)}e_j=\sum_{i=1}^db_{ij}F_i$, 
\begin{align*}
d_m(\bar Y-m)&=\sum_{j=1}^N\sum_{i=1}^d b_{ji}(\bar Y-m)(j)F_i\\
&=\sum_{i=1}^d\left(\sum_{j=1}^N b_{ji}(\bar Y-m)^{(j)}  \right)F_i\;\;\quad \quad (F_i=F_i(P(m))).
\end{align*}
Thus one arrives at the following result.

\begin{proposition}
\label{prop-4.3}
Assume the projection $P$ is uniquely defined and is continuously differentiable in a neighborhood $V$ of $m=EY_j$, and $E|Y_j|^2<\infty$. Then 
\begin{align}
\label{eq-exclt}
\sqrt{n} d_mP(\bar Y-m)\xrightarrow{d} N(0,\Sigma)
\end{align}
where $\Sigma=B'CB$ with $b=((b_{ji}))$ and $C$ is the $N\times N$ covariance matrix of $Y_j$. 
\end{proposition}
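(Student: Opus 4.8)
The plan is to recognize that, since $P$ is $C^{1}$ near $m$, the differential $d_{m}P$ is a fixed deterministic linear map, so that $d_{m}P(\bar Y-m)$ is literally an average of i.i.d.\ mean-zero random vectors; the assertion is then an immediate consequence of the classical multivariate central limit theorem together with the stability of Gaussian vectors under linear maps. No serious obstacle arises here; the only points requiring care are the well-definedness and interpretation of $d_{m}P$ — in particular that its range is the $d$-dimensional tangent space $T_{P(m)}J(M)$, so the limiting law is $d$-dimensional and generically degenerate as a distribution on $E^{N}$ — and these have essentially been settled in the discussion preceding the statement (existence of $\mu_{n}=P(m_{n})$ in $G$ with probability tending to $1$).

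First I would make $d_{m}P$ explicit. Because $P$ maps the neighborhood $V$ of $m$ into the $d$-dimensional submanifold $J(M)\subset E^{N}$ and $P(m)\in J(M)$, the differential $d_{m}P\colon T_{m}E^{N}\cong E^{N}\to T_{P(m)}J(M)$ is a well-defined bounded linear operator whose range is the $d$-dimensional subspace $T_{P(m)}J(M)$. Writing $d_{m}P\,e_{j}=\sum_{i=1}^{d}b_{ji}F_{i}$ in the basis $F_{1},\dots,F_{d}$ of $T_{P(m)}J(M)$ (the $e_{j}$ being the standard basis of $E^{N}$), the operator $d_{m}P$ is represented by the $d\times N$ matrix $b'$, i.e.\ the coordinate vector of $d_{m}P\,v$ relative to $\{F_{i}\}$ equals $b'v$ for every $v\in E^{N}$, with $b=((b_{ji}))$ as in the statement.

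Next I would invoke the CLT. The random vectors $Y_{j}-m=J(X_{j})-m$ are i.i.d., have mean $0$ since $m=EY_{j}$, and satisfy $E|Y_{j}-m|^{2}<\infty$ by the hypothesis $E|Y_{j}|^{2}<\infty$; hence $C=E\big[(Y_{1}-m)(Y_{1}-m)'\big]$ is a finite $N\times N$ covariance matrix and, by the multivariate central limit theorem, $\sqrt{n}\,(\bar Y-m)\xrightarrow{d}N(0,C)$. By linearity $\sqrt{n}\,d_{m}P(\bar Y-m)=b'\big(\sqrt{n}(\bar Y-m)\big)$, and since $v\mapsto b'v$ is continuous, the continuous mapping theorem gives $\sqrt{n}\,d_{m}P(\bar Y-m)\xrightarrow{d}N\big(0,\,b'Cb\big)$, which is \eqref{eq-exclt} with $\Sigma=B'CB$, $B=b$. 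Equivalently, one may bypass the $N$-dimensional CLT and apply the CLT directly to the i.i.d.\ $d$-vectors $d_{m}P(Y_{j}-m)$, whose common covariance is $b'Cb$.

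Finally, it is worth recording how this fits Theorem~\ref{th-clt}: if $\phi$ is a chart of $J(M)$ around $\mu=P(m)$, a first-order Taylor expansion of $\phi\circ P$ at $m$ gives $\sqrt{n}\big[\phi(\mu_{n})-\phi(\mu)\big]=\sqrt{n}\big[\phi(P(m_{n}))-\phi(P(m))\big]=d_{P(m)}\phi\big(\sqrt{n}\,d_{m}P(m_{n}-m)\big)+R_{n}$, where $m_{n}=\bar Y$ and $R_{n}\to 0$ in probability because $m_{n}\to m$ almost surely and $P,\phi$ are $C^{1}$; thus \eqref{eq-exclt} simply records the same limiting law intrinsically on $T_{P(m)}J(M)$ rather than in a fixed chart.
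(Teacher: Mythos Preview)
Your proposal is correct and follows essentially the same approach as the paper: the paper's argument, contained in the discussion immediately preceding the proposition, consists precisely of expressing $d_mP(\bar Y-m)$ in coordinates relative to the basis $\{F_i\}$ of $T_{P(m)}J(M)$ via the matrix $b=((b_{ji}))$, after which the result is an immediate consequence of the classical multivariate CLT applied to $\sqrt{n}(\bar Y-m)$. Your write-up is somewhat more explicit about invoking the continuous mapping theorem and about the $d$-dimensional nature of the limit, but the substance is identical.
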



\begin{corollary}[CLT for Intrinsic Means-I]
 \label{coro-1}  Let $(M,g)$ be a $d$-dimensional complete Riemannian manifold with metric tensor $g$ and geodesic distance $\rho_g$. Suppose  $Q$ is a probability measure on $M $ with intrinsic mean $\mu_I$, and that $Q$ assigns zero mass to a neighborhood, however small, of the \emph{cut locus} of $\mu_I$.  Let $\phi= Exp \mu_I^{-1}$ be the inverse exponential, or $\log$-, function at $\mu_I$ defined on a neighborhood $G$ of $\mu = \mu_I$ onto its image $U$ in the tangent space $T_{\mu_I} (M)$. Assume that the assumptions (A4)-(A6) hold. Then, with $s=d$, the CLT  \eqref{eq-maineq} holds for the intrinsic sample mean $\mu_n = \mu_{n,I}$, say.
\end{corollary}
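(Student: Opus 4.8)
The plan is to derive Corollary \ref{coro-1} as a direct application of Theorem \ref{th-clt} with $S=M$, $\rho=\rho_g$, $s=d$, $G$ the neighborhood of $\mu_I$ on which $\phi=\Exp_{\mu_I}^{-1}$ is a diffeomorphism onto the open set $U\subset T_{\mu_I}(M)\cong\R^d$. Thus only assumptions (A1)--(A3) need to be verified, since (A4)--(A6) are hypotheses of the corollary. Assumption (A1), uniqueness of the Fr\'echet minimizer, is built into the statement that $\mu_I$ is the intrinsic mean (and this also forces $F$ in \eqref{eq-frefunc} to be finite). For (A3): completeness of $(M,g)$ makes \eqref{eq-compact} hold by Hopf--Rinow, so Theorem \ref{th-2.1}(c) gives $\mu_{n,I}\to\mu_I$ almost surely, hence $P(\mu_{n,I}\in G)\to 1$.

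The real content is (A2): one must show that $x\mapsto h(x;q)=\rho_g^2(\Exp_{\mu_I}(x),q)$ is twice continuously differentiable on a single neighborhood $U$ of $0$ for $Q$-almost every $q$. Here I would invoke the standard fact that on a complete Riemannian manifold $(p,q)\mapsto\rho_g^2(p,q)$ is $C^\infty$ on the open set $\mathcal O=\{(p,q): q\notin\mathrm{Cut}(p)\}$, which equals $\{(p,q): p\notin\mathrm{Cut}(q)\}$ by symmetry of the cut-locus relation. Since $Q$ gives zero mass to some neighborhood $W$ of $\mathrm{Cut}(\mu_I)$, for $Q$-a.e. $q$ we have $q\notin W$, hence $q\notin\mathrm{Cut}(\mu_I)$, i.e. $\mu_I\notin\mathrm{Cut}(q)$, so $(\mu_I,q)\in\mathcal O$. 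What remains is to make this neighborhood of $\mu_I$ uniform in $q$: using that $\mathcal O$ is open (equivalently, the cut-pair set is closed), together with a tube-lemma argument applied to the closed set $M\setminus W$ --- truncated, when $M$ is noncompact, to a closed geodesic ball carrying all but $Q$-negligible mass, the tails being controlled by finiteness of $F$ --- one obtains a neighborhood $G\ni\mu_I$ with $G\times(M\setminus W)\subset\mathcal O$. Setting $U=\phi(G)$ establishes (A2), and $U$ may be shrunk freely afterward, which is harmless for (A5).

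With (A1)--(A6) in force, Theorem \ref{th-clt} yields $n^{1/2}[\phi(\mu_{n,I})-\phi(\mu_I)]\xrightarrow{\mathcal L}N(0,\Lambda^{-1}C\Lambda^{-1})$, which is exactly \eqref{eq-maineq} with $s=d$. If a more explicit description is wanted, one can note that in the normal coordinates $\phi=\Exp_{\mu_I}^{-1}$ the first derivatives $D_rh(0;q)$ are proportional to the coordinates of $\Exp_{\mu_I}^{-1}(q)$, so $C$ is a constant multiple of the covariance matrix of the $\log$-mapped data $\Exp_{\mu_I}^{-1}(Y_1)$, while $\Lambda=E[\mathrm{Hess}\,h(0;Y_1)]$ is the averaged Hessian of the squared-distance function, reducing to $2I_d$ in the flat case and carrying the curvature corrections otherwise.

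I expect the main obstacle to be precisely the uniformity in (A2): promoting "for $Q$-a.e. $q$, $h(\cdot;q)$ is $C^2$ near $\mu_I$'' --- where the neighborhood a priori depends on $q$ --- to "there is a fixed neighborhood that works for $Q$-a.e. $q$.'' This is where the hypothesis that $Q$ annihilates a whole neighborhood of the cut locus (not merely the cut locus itself) is indispensable, and where, for noncompact $M$, an extra truncation to a compact set is needed to apply the tube lemma. Everything else is routine: (A1) and (A4)--(A6) are assumed, (A3) is consistency from Theorem \ref{th-2.1}, and the conclusion follows at once from Theorem \ref{th-clt}.
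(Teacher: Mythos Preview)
Your approach is exactly what the paper intends: the corollary is stated without proof, as an immediate specialization of Theorem~\ref{th-clt} once (A1)--(A3) are verified, and you have correctly identified (A2) as the only substantive step, with (A1) built into the definition of $\mu_I$ and (A3) coming from Theorem~\ref{th-2.1} via Hopf--Rinow. Your tube-lemma argument for (A2) is the right idea and is precisely the mechanism the paper later invokes (without justification) in the proof of Proposition~\ref{prop-4.4}, where it writes ``there exist $r>0$ and $\epsilon>0$ \ldots\ such that $(\mathrm{Cut}(B_r))^\epsilon\subset W$.''

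One caution on the noncompact case: your truncation does not close the gap as written. Restricting to a closed ball that carries all but \emph{small} $Q$-mass does not produce a $Q$-\emph{null} exceptional set, which is what (A2) demands; and taking an exhausting sequence of such balls would force the neighborhood $G$ to shrink, possibly to $\{\mu_I\}$. The tube lemma applied to $\{\mu_I\}\times(M\setminus W)$ inside the open set $\{(p,q):q\notin\mathrm{Cut}(p)\}$ really does require compactness of $M\setminus W$. Continuity of the cut-time function on the unit tangent bundle handles the ``bounded'' cut points of nearby $p$ (they must accumulate on $\mathrm{Cut}(\mu_I)\subset W$), but does not rule out cut points of $p$ escaping to infinity along directions where $\tau(\mu_I,\cdot)=\infty$. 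In the paper's applications $M$ is compact (spheres, projective and shape spaces), so the issue does not arise; for general complete $M$ you should either add compactness as a standing hypothesis or assume directly that $\mathrm{Cut}(B(\mu_I,r))\subset W$ for some $r>0$, which is how the paper itself proceeds in Proposition~\ref{prop-4.4}.
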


\begin{remark} In addition to providing a CLT for manifolds (of dimension $d$), Theorem \ref{th-clt} applies to many stratified spaces which are manifolds of different dimensions s glued together. See \cite{linclt} for a simple derivation of a CLT for the so called Open Book model, originally due to \cite{openbook}. Another stratified space to which Theorem \ref{th-clt} applies is $\Sigma_m^k$, $m>2$, $k>m$, described in Section 3 (see Remark \ref{rem-4.17}).
\end{remark}

\begin{remark}
  For manifolds of dimension d (i.e., $s =d$),  Theorem \ref{th-clt} and Corollary \ref{coro-1} improve upon Theorem 2.3 and 5.3 in \cite{rabibook} (and  earlier results in \cite{rabi05}).
  \end{remark}

We now turn to the derivation of the asymptotic distribution of sample intrinsic Fr\'echet means on Riemannian manifolds which does not require the support restriction of Corollary \ref{coro-1}.

For the case of the circle $S^1$, necessary and sufficient conditions for the existence of the intrinsic mean was established in \cite{rabi07}, under the assumption of a continuous density with respect to the uniform distribution.  The result also along with a central limit theorem for the sample intrinsic mean in \cite{rabibook}, pp. 31-34, 72-75. A proof of the CLT was also obtained independently in \cite{mickilliam}.  Some additional results, especially for distributions with discontinuous density may be found in the recent article \cite{Hotz2015}.

\begin{proposition} 
\label{prop-4.4}Suppose a complete orientable $d$-dimensional Riemannian manifold $(M,g)$ has the property that the image $D \subset T_pM$ of $M\backslash\text{Cut}(p)$ under the map $\log_p = Exp_p^{-1}$ is the same for all $p\in M$ , and the push forward of the volume measure on $D$ under the $\log_p$ map is also  the same for all $p$.  Assume that the intrinsic mean $\mu_i$ of a probability $Q$ on $M$ exists, and that $Q$ is absolutely continuous in a neighborhood $W$ of $\text{Cut}(\mu_I)$ with a density $f$ on $W$ which is twice continuously differentiable.  Assume also that the first and second derivatives of $p\rightarrow f(Exp_pv)$, in local coordinates,  are bounded for $p$ in a neighborhood of $\mu_I$ by functions $f_i(v$),  $f_{ij}(v)$ such that, for a sufficiently small $\epsilon>0$,
 \begin{align}
  \int_{\{R-\epsilon<|v|<R\}}|v|^2f_i(v)m(dv)  < \infty ,  \int_{\{R-\epsilon<|v|<R\}} |v|^2f_{ij}(v)m(dv)  < \infty,\;\; (i,j=1,\ldots,d),                    
   \end{align}
where $m(dv)$ is the push forward on $T_pM$ of the volume measure by the map $\log_{\mu_I}$.  Then there exists a neighborhood of $\mu_I$ in which the Fr\'echet function \eqref{eq-frefunc} with  $\rho=\rho_g$, is twice continuously differentiable.
\end{proposition}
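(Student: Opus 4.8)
The plan is to split the Fr\'echet function into a piece supported near $\text{Cut}(\mu_I)$, where $q\mapsto\rho_g^2(p,q)$ is singular, and a piece supported away from it, and then to remove the singularity of the first piece by the exponential change of variables, at the price of the integrability conditions assumed on $f$. Fix a smooth $\chi\colon M\to[0,1]$ with $\chi\equiv1$ on a neighborhood $W_1$ of $\text{Cut}(\mu_I)$ and $\mathrm{supp}\,\chi\subset W$, and write $F=\widehat F_1+\widehat F_2$, where $\widehat F_1(p)=\int_M\rho_g^2(p,q)\bigl(1-\chi(q)\bigr)Q(dq)$ and $\widehat F_2(p)=\int_M\rho_g^2(p,q)\chi(q)Q(dq)$; it then suffices to show each summand is $C^2$ on a common neighborhood $V$ of $\mu_I$, shrinking $V$, $W_1$ and $\mathrm{supp}\,\chi$ toward $\mu_I$ and $\text{Cut}(\mu_I)$ as needed. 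Throughout I identify the tangent spaces near $\mu_I$ by a fixed smooth orthonormal frame and write $D=\{v:|v|<R\}$, the common domain of the $\log_p$ maps.

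For $\widehat F_1$: since the cut-locus distance function on the unit tangent bundle is continuous, $\text{Cut}(p)\subset W_1$ for all $p$ in a sufficiently small $V$; hence for such $p$ and any $q$ with $\chi(q)<1$ one has $q\notin\text{Cut}(p)$, so $(p,q)\mapsto\rho_g^2(p,q)$ is jointly smooth and the integrand of $\widehat F_1$ is $C^2$ in $p$ for every $q$. To differentiate twice under the integral sign I would dominate using $\rho_g^2(p,q)\le2\rho_g^2(\mu_I,p)+2\rho_g^2(\mu_I,q)$, $|\nabla_p\rho_g^2(p,q)|\le2\rho_g(p,q)$, and the bound $|\mathrm{Hess}_p\,\rho_g^2(\cdot,q)|\le C(1+\rho_g(p,q))$ obtained from Hessian comparison, valid uniformly for $p\in\overline V$, $q\in\mathrm{supp}(1-\chi)$ because the closed cut-pair set $\{(p,q):q\in\text{Cut}(p)\}$ is disjoint from the set $\overline V\times\mathrm{supp}(1-\chi)$ (compact when $M$ is compact, as in the applications), so that $\rho_g(p,q)$ stays bounded away from the cut distance. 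Each dominating function is $Q$-integrable since $F(\mu_I)<\infty$ yields finite first and second $\rho_g$-moments of $Q$ about $\mu_I$; the standard differentiation-under-the-integral theorem together with dominated convergence then gives $\widehat F_1\in C^2(V)$.

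For $\widehat F_2$: extend $g_0:=\chi f$ by $0$ outside $W$ to a function in $C^2(M)$. Since $\mathrm{supp}\,\chi\subset W$, $Q|_W=f\,\mathrm{vol}$, and $\text{Cut}(p)$ is $\mathrm{vol}$-null, the change of variables $q=\text{Exp}_p v$ (a diffeomorphism of $D$ onto $M\setminus\text{Cut}(p)$), together with $\rho_g^2(p,\text{Exp}_p v)=|v|^2$ on $D$ and the hypothesis $(\log_p)_*\mathrm{vol}=m$ for every $p$, gives the fixed-domain representation
\begin{equation*}
\widehat F_2(p)=\int_D|v|^2\,g_0(\text{Exp}_p v)\,m(dv).
\end{equation*}
The crucial point is that, even though $\rho_g^2(\cdot,q)$ fails to be differentiable across $\text{Cut}(p)$, the map $p\mapsto\text{Exp}_p v$ is smooth for every fixed $v$, including $v$ near $\partial D$, and the integration domain $D$ no longer depends on $p$. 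Now $D_r[g_0(\text{Exp}_p v)]$ and $D_{rr'}[g_0(\text{Exp}_p v)]$ vanish unless $\text{Exp}_p v\in\mathrm{supp}\,\chi$, which for $p\in V$ and the chosen $\chi$ forces $|v|>R-\epsilon$ (with $\epsilon$ as in the hypotheses, shrunk if necessary); on that annulus, expanding by the product and chain rules and using $|D_r[f(\text{Exp}_p v)]|\le f_r(v)$ and $|D_{rr'}[f(\text{Exp}_p v)]|\le f_{rr'}(v)$ from the hypotheses, together with the boundedness of $\chi$, its derivatives, and $f$ on the relevant compact set, one obtains
\begin{equation*}
|v|^2\,\bigl|D_r[g_0(\text{Exp}_p v)]\bigr|\le C\,|v|^2\bigl(1+f_r(v)\bigr)\,1_{\{R-\epsilon<|v|<R\}},
\end{equation*}
and the analogue with $f_r+f_{r'}+f_{rr'}$ for the second derivatives. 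By the displayed integral conditions in the hypotheses (and $m(D)=\mathrm{vol}(M)<\infty$) these bounds are $m$-integrable, so differentiating twice under the integral sign, with dominated convergence for continuity, yields $\widehat F_2\in C^2(V)$. Hence $F=\widehat F_1+\widehat F_2\in C^2(V)$.

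The step I expect to be the main obstacle is the uniformity needed in the $\widehat F_1$ estimate: one must show that, as $p$ varies over a neighborhood of $\mu_I$, the Hessian of $\rho_g^2(\cdot,q)$ does not blow up for $q$ outside $W_1$ — equivalently, that such $q$ stay uniformly away from $\text{Cut}(p)$ — which rests on closedness of the cut-pair set plus a compactness argument (automatic when $M$ is compact, as in the shape spaces considered here, and trivial for manifolds with empty cut locus). The companion subtlety, in the $\widehat F_2$ step, is the matching of the fixed neighborhood $W_1$ of $\text{Cut}(\mu_I)$ in $M$ with the annulus $\{R-\epsilon<|v|<R\}$ near $\partial D$ inside every $T_pM$ simultaneously; this is exactly where one uses the assumption that $D$ and $m=(\log_p)_*\mathrm{vol}$ are independent of $p$.
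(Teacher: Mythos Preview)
Your argument is correct and follows essentially the same route as the paper: split $F$ into a piece supported away from $\text{Cut}(\mu_I)$ and a piece supported near it, then remove the cut-locus singularity in the second piece by the change of variables $q=\text{Exp}_p v$, which turns it into $\int_{\{R-\epsilon<|v|<R\}}|v|^2 f(\text{Exp}_p v)\,m(dv)$ on a $p$-independent domain, where the integrability hypotheses on $f_i,f_{ij}$ justify differentiation under the integral. The paper's own proof is only a sketch and uses a hard cutoff at $\{|\log_p q|=R-\epsilon\}$; your use of a smooth $\chi(q)$ is a genuine improvement, since it makes the decomposition $p$-independent from the outset and thereby avoids having to differentiate a domain that moves with $p$---a point the paper glosses over with ``clearly twice continuously differentiable.'' Your identification of the main obstacle (uniform Hessian control for $\widehat F_1$ via closedness of the cut-pair set) is also a level of care the paper does not supply.
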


\begin{proof}
First note that there exist $r>0$ and $\epsilon>0$, both sufficiently small and a geodesic ball $B_r$ with center $\mu_I$ and radius $r>0$ such that ($\text{Cut} (B_r))^{\epsilon} \subset W$, where $\text{Cut} (B_r) = \cup \{\text{Cut}(p): p\in B_r\}$ and $A^\epsilon$  is the $\epsilon$-neighborhood of a set $A \subset M.$ For $p \in B_r$,
\begin{align} 
\label{eq-int}
F(p) = \int_{\{q: |\log_pq| <R-\epsilon\}} \rho_g^2(p, q)Q(dq) +   \int_{\{q: R-\epsilon<|\log_pq| <R\}} \rho_g^2(p, q)Q(dq)                                                     
   \end{align}
The first integral in \eqref{eq-int} is clearly twice continuously differentiable. The second integral may be expressed as
\begin{align}
\int_{\{R-\epsilon < |v| <R\}} |v|^2f(Exp_pv)m(dv),
\end{align}
where $R = \rho_g(p, Cut(p))$.

\end{proof}

\begin{remark}
  We conjecture that the conclusion of Proposition \ref{prop-4.4} holds much more generally and, in particular, for all compact orientable Riemannian manifolds, if $Q$ has a twice continuously differentiable density. 
  \end{remark}

The following result is an immediate consequence of Proposition \ref{prop-4.4}.


\begin{corollary}
  \label{coro-4.5}
    Let $M=S^d$ with the usual Riemannian metric tensor, and $Q$ a probability measure on it. 
 (a) Then the Fr\'echet function is twice continuously differentiable if $Q$ has a twice continuously differentiable density. (b) If $Q $ is absolutely continuous in a neighborhood of the cut locus $Cut(p)$ of a point $p$,  with a twice continuously differentiable density there, then the Fr\'echet function is twice continuously differentiable in a neighborhood of  $p$.
 \end{corollary}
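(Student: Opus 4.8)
The plan is to verify that $S^d$ satisfies the hypotheses of Proposition \ref{prop-4.4}, so that both parts follow immediately. First I would recall the geometry of the sphere: for any $p\in S^d$ the cut locus $\text{Cut}(p)$ is the single antipodal point $-p$, the injectivity radius is $\pi$, and the image $D\subset T_pM$ of $M\backslash\text{Cut}(p)$ under $\log_p$ is the open ball of radius $\pi$ centered at the origin; in particular $R=\rho_g(p,\text{Cut}(p))=\pi$ for every $p$, and $D$ is literally the same ball for all $p$ once we fix an orthonormal identification $T_pM\cong\R^d$. Moreover, because the isometry group $O(d+1)$ acts transitively on $S^d$ and sends $\log_p$-normal coordinates at $p$ to $\log_q$-normal coordinates at $q$, the push-forward of the volume measure on $D$ under $\log_p$ is also independent of $p$; explicitly it is $m(dv)=\bigl(\tfrac{\sin|v|}{|v|}\bigr)^{d-1}\,dv$ on the ball $\{|v|<\pi\}$. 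So the two structural assumptions of Proposition \ref{prop-4.4} hold on $S^d$.

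Next I would address the integrability conditions on the derivative bounds $f_i(v)$, $f_{ij}(v)$. Here I would use the fact that a twice continuously differentiable density $f$ on the compact manifold $S^d$ has $f$, together with all its first and second derivatives in local coordinates, bounded by a constant $K$ uniformly; the map $(p,v)\mapsto f(\Exp_pv)$ and its first two derivatives in $p$ are then bounded by $K$ (times geometric factors controlled by curvature, again uniformly because $S^d$ is homogeneous) on the compact set $\{p\in B_r\}\times\{R-\epsilon\le|v|\le R\}$. Hence we may take $f_i(v),f_{ij}(v)$ to be constants on the shell $\{R-\epsilon<|v|<R\}=\{\pi-\epsilon<|v|<\pi\}$, and since $|v|^2\le\pi^2$ there and the shell has finite $m$-measure, the integrals in the displayed hypothesis of Proposition \ref{prop-4.4} are trivially finite. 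This gives part (a): the Fr\'echet function is twice continuously differentiable in a neighborhood of $\mu_I$; and by a covering argument (every point of $S^d$ has such a neighborhood, using that under the global hypothesis the derivative bounds hold near every point, not just near $\mu_I$) it is twice continuously differentiable everywhere. For part (b) the argument is the same, but one only needs $f$ twice continuously differentiable on a neighborhood $W$ of $\text{Cut}(p)=\{-p\}$: choose $r,\epsilon$ small enough that $(\text{Cut}(B_r))^\epsilon\subset W$, which is possible since $\text{Cut}(B_r)$ is a small ball around $-p$, and run the same split of $F$ as in \eqref{eq-int}.

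The only point requiring care — and the step I expect to be the main (minor) obstacle — is confirming that near the cut locus the representation of the second integral as $\int_{\{R-\epsilon<|v|<R\}}|v|^2 f(\Exp_pv)\,m(dv)$ is valid \emph{with $R=\pi$ fixed and $m$ independent of $p$} as $p$ ranges over $B_r$, so that differentiating under the integral sign in $p$ is legitimate; this is exactly where the homogeneity of $S^d$ does the work, since it makes both the domain of integration and the reference measure $p$-independent, reducing the $p$-dependence to the smooth integrand $f(\Exp_pv)$, whose first two $p$-derivatives are dominated by an $m$-integrable function on the shell by the boundedness just discussed. Once this is in place, dominated convergence and the standard differentiation-under-the-integral theorem give the $C^2$ conclusion, and the corollary follows.
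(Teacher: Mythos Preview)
Your proposal is correct and follows the paper's own approach: the paper states only that the corollary ``is an immediate consequence of Proposition \ref{prop-4.4},'' and you have supplied precisely the verification of the hypotheses of that proposition on $S^d$ (homogeneity giving a $p$-independent domain $D=B(0;\pi)$ and push-forward measure $m$, and compactness giving the trivial integrability of the derivative bounds). Your additional remarks---that the argument of Proposition \ref{prop-4.4} applies at any point $p$ rather than only at $\mu_I$, and the covering argument for part (a)---are exactly what is needed to match the slightly more general formulation of the corollary, and the care you take with differentiation under the integral sign is appropriate though not spelled out in the paper.
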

 
For the statement of the next result we continue to use the notation of Corollary \ref{coro-1}.

 \begin{theorem}[\citep{linclt}]
\label{th-2.4}
Suppose that the intrinsic mean  $\mu$ of $Q$ exists, and that $Q$ is absolutely continuous in a neighborhood $W$ of the cut locus of $\mu$ with a continuous density with respect to the volume measure. Assume also that (i) $Q(Cut(B(\mu;\epsilon))) = O(\epsilon^{d-c})$, $\epsilon\rightarrow 0$,  for some $c$, $0\leq c< d$, (ii) on some neighborhood $V$ of $\nu = \phi(\mu) =0$ the function $\theta\rightarrow F\left(\phi^{-1}(\theta)\right)$  is twice continuously differentiable with a nonsingular Hessian $\Lambda(\theta)$, and (iii) (A4) holds with $\phi(\mu)$ replaced by $\theta $, $\forall$$\theta\in V$. Then, if $d>c+2$, one has the CLT \eqref{eq-maineq}  for the sample intrinsic mean $\mu_n$.
\end{theorem}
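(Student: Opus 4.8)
The plan is to re-run the proof of Theorem~\ref{th-clt} with the chart $\phi = \log_\mu = \Exp_\mu^{-1}$ (so $s = d$ and $\nu := \phi(\mu) = 0$), inserting a truncation around $\text{Cut}(\mu)$ since the present hypotheses do not supply (A2) or (A5). Several ingredients are free: (A1) is the assumed uniqueness of $\mu$, (A6) is (ii), (A4) is (iii); and since $\text{Cut}(\mu)$ has zero volume while $Q$ is absolutely continuous near it, $Q(\text{Cut}(\mu)) = 0$, so completeness of $M$ and finiteness of $F$ give $\mu_n \to \mu$ a.s.\ by Theorem~\ref{th-2.1}, i.e.\ (A3). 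A crude rate $\rho_g(\mu_n,\mu) = O_P(n^{-1/4})$ is also free, by the usual $M$-estimation argument: (ii) and Remark~\ref{rem-2.1} give $F(p) - F(\mu) \ge c_0\,\rho_g(p,\mu)^2$ near $\mu$, the family $\{\rho_g^2(p,\cdot)\}$ is bounded and uniformly Lipschitz hence $Q$-Donsker so $\sup_p|F_n(p) - F(p)| = O_P(n^{-1/2})$, and $F_n(\mu_n) \le F_n(\mu)$ then forces $c_0\,\rho_g(\mu_n,\mu)^2 = O_P(n^{-1/2})$. The genuine obstruction is that $h(x;q) = \rho_g^2(\phi^{-1}x, q)$ fails to be $C^2$ near $x = 0$ when $q \in \text{Cut}(\mu)$, and that as $q \to \text{Cut}(\mu)$ the Hessian of $\rho_g^2(\cdot,q)$ --- and, worse, its modulus of continuity --- blow up like $\rho_g(q,\text{Cut}(\mu))^{-1}$ and $\rho_g(q,\text{Cut}(\mu))^{-2}$ respectively; hypothesis (i) exists to bound the $Q$-mass in these strata.

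Because $d > c + 2$, i.e.\ $\tfrac{1}{2(d-c)} < \tfrac14$, I may fix $\delta_n \downarrow 0$ with $n^{-1/4} = o(\delta_n)$ and $n^{1/2}\delta_n^{d-c} \to 0$. Setting $W_n = \{q : \rho_g(q,\text{Cut}(\mu)) \le \delta_n\}$, I split $F_n = F_n^{(1)} + F_n^{(2)}$ and $F = F^{(1)} + F^{(2)}$ according to whether the data point lies outside or inside $W_n$. Hypothesis (i) --- via the fact that $\text{Cut}(B(\mu;\epsilon))$ is comparable to an $O(\epsilon)$-thickening of $\text{Cut}(\mu)$, so that $Q(\{q : \rho_g(q,\text{Cut}(p)) \le t\}) = O(t^{d-c})$ uniformly over base points $p$ near $\mu$ --- then yields: $F_n^{(2)}$ is Lipschitz with constant $o_P(n^{-1/2})$; $F^{(2)}$ and its first two derivatives at $\mu$ have norm $O(\delta_n^{d-c})$; and on a ball $B(\mu;c_1\delta_n)$ every summand of $F_n^{(1)}$ is $C^2$, with $\text{Hess}\,F_n^{(1)} \xrightarrow{P} \Lambda := \Lambda(0)$ and with the (A4)- and (A5)-type bounds for $h^{(1)}$ valid \emph{uniformly in $n$}. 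This last point is where $d - c > 2$ is indispensable: it is exactly the condition under which the singular integral $\int_{\delta_n}^{R} t^{d-c-3}\,dt$, governing the modulus of the Hessian, stays bounded as $\delta_n \to 0$.

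The reasoning behind Theorem~\ref{th-clt} then applies on $B(\mu;c_1\delta_n)$, which contains $\mu_n$ with probability tending to one by the preliminary rate. There $\mu_n$ minimizes $F_n^{(1)} + F_n^{(2)}$ with $F_n^{(1)}$ of class $C^2$, so $|\nabla F_n^{(1)}(\nu_n)| \le \text{Lip}(F_n^{(2)}) = o_P(n^{-1/2})$, while Taylor-expanding $\nabla F_n^{(1)}$ about $0$ gives
\[
  \nabla F_n^{(1)}(\nu_n) \;=\; \tfrac{1}{n}\sum_j \nabla h^{(1)}(0;Y_j) + \Lambda_n \nu_n + o_P(|\nu_n|), \qquad \Lambda_n \xrightarrow{P} \Lambda .
\]
Here $\tfrac1n\sum_j \nabla h^{(1)}(0;Y_j)$ differs from $\tfrac1n\sum_j \nabla h(0;Y_j) = -\tfrac2n\sum_j \log_\mu Y_j$ by a quantity that is $o_P(n^{-1/2})$ --- its mean is $\nabla F^{(2)}(0) = O(\delta_n^{d-c})$, so $n^{1/2}$ times it has vanishing mean, and its per-observation variance is $O(Q(W_n)) \to 0$ --- while $E\,\nabla h(0;Y_1) = \nabla F(0) = 0$ by Remark~\ref{rem-2.1} and $E|\nabla h(0;Y_1)|^2 < \infty$ by (iii); so the classical CLT gives $n^{1/2}\tfrac1n\sum_j \nabla h^{(1)}(0;Y_j) \xrightarrow{\mathcal L} N(0,C)$ with $C = \mathrm{Cov}(\nabla h(0;Y_1))$. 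Combining, multiplying through by $n^{1/2}$, and using $n^{1/2}\delta_n^{d-c} \to 0$, nonsingularity of $\Lambda$, Slutsky's lemma, and the routine deduction $n^{1/2}|\nu_n| = O_P(1)$, one obtains $n^{1/2}[\phi(\mu_n) - \phi(\mu)] \xrightarrow{\mathcal L} N(0, \Lambda^{-1}C\Lambda^{-1})$, which is \eqref{eq-maineq}.

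I expect the real work to lie in the truncation estimates --- extracting from (i) and the continuous-density assumption the bounds on the Hessian and the modulus of the Hessian of the truncated Fr\'echet function near $\text{Cut}(\mu)$ \emph{uniformly as $\delta_n$ shrinks}. This is precisely where $d > c+2$ is used and cannot be relaxed within this scheme: it is what makes $\int_{\delta_n} t^{d-c-3}\,dt$ bounded, equivalently what makes the window $n^{-1/4} \ll \delta_n \ll n^{-1/(2(d-c))}$ for the truncation level nonempty (lower endpoint from the preliminary rate, upper endpoint from killing the cut-locus bias after $n^{1/2}$-scaling). A smaller but real point still to settle is the exact first-order relation at $\mu_n$: an interior minimizer of $F_n$ cannot lie on the multiple-minimizing-geodesic part of any $\text{Cut}(Y_j)$, since there $\rho_g^2(\cdot,Y_j)$ has a strictly concave corner that the remaining smooth summands of $F_n$ cannot offset; hence $F_n$ is in fact differentiable at $\mu_n$, with $\nabla F_n(\nu_n) = -\tfrac2n\sum_j \log_{\mu_n}Y_j = 0$ almost surely, which legitimizes the expansion.
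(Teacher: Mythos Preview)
Your route is workable in principle but genuinely different from the paper's, and more elaborate than needed. The paper does \emph{not} establish a preliminary rate for $\mu_n$ and does \emph{not} split $F_n$ into good and bad pieces. Instead it argues that with probability tending to one, \emph{none} of the observations $Y_1,\dots,Y_n$ fall in $Cut(\phi^{-1}(B_n))$, where $B_n$ is the ball of radius $n^{-1/2}\log n$ in $T_\mu M$: the union bound gives $P\bigl(\exists\,j:\ Y_j\in Cut(\phi^{-1}(B_n))\bigr)=O\bigl(n\cdot(n^{-1/2}\log n)^{d-c}\bigr)=o(1)$ precisely when $d>c+2$. On that event the full empirical gradient $Z_n(\theta)=n^{-1}\sum_j\nabla h(\theta,Y_j)$ is smooth on $B_n$, and a Brouwer fixed-point argument for $\theta\mapsto -\Lambda_n(\theta)^{-1}Z_n(0)$ produces a critical point $\nu_n\in\bar B_n$; local strict convexity of $F$ then identifies $\nu_n$ with the sample intrinsic mean, and the CLT follows exactly as in Theorem~\ref{th-clt}. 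So the condition $d>c+2$ enters as $n\cdot n^{-(d-c)/2}\to 0$, not through your truncation window $n^{-1/4}\ll\delta_n\ll n^{-1/(2(d-c))}$.

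What your approach buys is robustness: if one could only show that the \emph{expected} number of bad observations is small rather than that there are none, splitting would still go through. What it costs is the extra machinery---the Donsker-class argument for the $O_P(n^{-1/4})$ rate (which, incidentally, needs $M$ compact or a moment bound you have not stated), the reinterpretation of hypothesis~(i) as controlling a $\delta$-thickening of $Cut(\mu)$ rather than $Cut(B(\mu;\delta))$ (these are related but not identical sets, and the equivalence you invoke needs an argument), and the uniform-in-$n$ Hessian modulus bounds that you yourself flag as ``the real work''. The paper sidesteps all of this: once no observation is near any relevant cut locus, the situation reduces verbatim to Theorem~\ref{th-clt}, and the Brouwer step replaces your preliminary rate entirely.
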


\begin{proof}
One may take the neighborhood $V $ of $\nu=0$ sufficiently small such that $Cut(\phi^{-1}(V))\subset W$. Then $Z_n(\theta):= n^{-1}\sum_{1\leq j\leq n}\text{grad}\;h(\theta,Y_j)$ is well defined for $Y_j\not\in Cut(\phi^{-1}\theta)$, $j=1,\ldots, n$, that is,   with  probability one,  provided $\theta\in V$, since  $Q(Cut(\phi^{-1}\theta))=0$. By the classical CLT, $Z_n(0):= n^{-1}\sum_{1\leq j\leq n}\text{grad}\;h(0,Y_j) $ is of the order $O_p(n^{-1/2}).$  Let $B_n$  be the ball in $T_{\mu}M$ with center $\nu= \phi(\mu)=0$ and radius $n^{-1/2}\log n$. By hypothesis, the probability that  $Y_j\in Cut(\phi^{-1}(B_n))$ is $O((n^{-1/2}\log n)^{d-c} )$.  For $\phi^{-1}(B_n)$ is the geodesic ball $B(\mu; n^{-1/2}\log n)$, hence the probability that the set $ \{Y_j:j=1,\ldots, n\}$  intersects $Cut(\phi^{-1}(B_n))$  is $O(n(n^{-1/2}\log n)^{d-c} ) = o(1)$ if $d>c+2$. Therefore, with probability converging to 1, one may use a Taylor expansion of $Z_n(\theta)$ in $B_n$, 
 \begin{align}
Z_n(\theta) = Z_n(\nu) + \Lambda_n(\theta)(\theta-\nu),\;\; (\theta\in B_n), \; (\nu=0),     
\end{align}                                                                       
where $\Lambda_n(\theta)$ is the $d\times d$ matrix whose $(r,r') $ element is  $n^{-1}\sum_{1\leq j\leq n}D_{r,r'}h(\theta(n;r,r', Y_j), Y_j)$  with $\theta(n;r,r',Y_j)$ lying on the line segment joining $\theta$ and $\nu=0$. 
By hypothesis (ii), with probability converging to one as $n\rightarrow \infty$, $\Lambda_n(\theta)$  is nonsingular for all large $n$ ($\theta\in B_n$) since its difference (in norm) from the Hessian $\Lambda(\theta)$  goes to zero as $n\rightarrow \infty$, by the strong law of large numbers. Also, with probability going to 1, the function $\theta\rightarrow H_n(\theta) =0 -\Lambda_n(\theta)^{-1}Z_n(\nu)$ maps  $\bar{B}_n$   into itself, where  $\bar{B}_n$  is the closure of $B_n$. For this argument recall that $Z_n(0) = O_p(n^{-1/2})$ by the classical CLT. By the \emph{Brouwer fixed point theorem} \citep{milnor1965topology}, $H_n(\theta) $ has a fixed point. Let $\nu_n$ denote a measurable selection from the set of fixed points in  $\bar{B}_n$. It follows that, with probability going to 1, $\nu_n$ converges to $\nu$ and satisfies the first order equation \eqref{eq-firstorder},   and $\nu_n$ is the sample intrinsic mean, since the Fr\'echet function is strictly convex in a neighborhood of $\nu$. The CLT now follows as in the last sentence and relation \eqref{eq-14} of the proof of Theorem  \ref{th-clt}.  

\end{proof}

%

\begin{corollary} 
\label{coro-2.5}
Suppose $Q$ on $M= S^d$ ($d>2$) has an intrinsic mean $\mu$ and is absolutely continuous on a neighborhood $W$ of $Cut(\mu)$ with a continuous density on $W$.  Then the CLT for the sample intrinsic mean holds.
\end{corollary}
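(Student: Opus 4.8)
The plan is to verify that Corollary \ref{coro-2.5} is an instance of Theorem \ref{th-2.4} with $M=S^d$. First I would fix the chart: take $\phi=\log_\mu=\Exp_\mu^{-1}$ on a geodesic ball $G=B(\mu;r)$ with $r<\pi$ (the injectivity radius of $S^d$), so $\nu=\phi(\mu)=0$; since $S^d$ is compact, Theorem \ref{th-2.1}(c) gives $\mu_n\to\mu$ almost surely, hence $\mu_n\in G$ eventually, which supplies the consistency underlying Theorem \ref{th-2.4}. The crucial feature of the sphere is that $\mathrm{Cut}(p)=\{-p\}$, a single point, so $\mathrm{Cut}(B(\mu;\epsilon))=B(-\mu;\epsilon)$. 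For small $\epsilon$ this ball lies in $W$, where $f$ is continuous and hence bounded on a slightly smaller closed ball, so $Q(\mathrm{Cut}(B(\mu;\epsilon)))\le(\sup f)\,\mathrm{vol}(B(-\mu;\epsilon))=O(\epsilon^{d})$. Thus hypothesis (i) of Theorem \ref{th-2.4} holds with $c=0$, and the requirement $d>c+2$ reduces to $d>2$, exactly the standing assumption.

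Next I would check (iii), i.e.\ (A4) at each $\theta$ with $p_\theta:=\phi^{-1}(\theta)$ near $\mu$, where $h(\theta;q)=\rho_g^2(p_\theta,q)$. The first derivatives are controlled at once because $\|\nabla_p\rho_g^2(p,q)\|=2\rho_g(p,q)\le2\pi$ on $S^d$, so $E|D_rh(\theta;Y_1)|^2<\infty$. For the second derivatives, $\rho_g^2(\cdot,q)$ is smooth with bounded $p$-derivatives whenever $q$ stays away from $\mathrm{Cut}(p)=\{-p\}$; near $q=-p$ I would write $\rho_g(p,q)=\pi-\rho_g(-p,q)$ and use that $\rho_g(-p,q)^2$ is a smooth function of $p$ vanishing to exactly second order at $p=-q$ to obtain $\|\nabla_p^2\rho_g^2(p,q)\|=O(1/\rho_g(-p,q))$. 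Since $-p_\theta\in W$ for $\theta$ small, $E|D_{r,r'}h(\theta;Y_1)|$ is then bounded by a constant plus $(\sup f)\int_{B(-p_\theta;\epsilon_1)}\rho_g(-p_\theta,q)^{-1}\,\mathrm{vol}(dq)<\infty$, the last integral being finite because $\int_0^{\epsilon_1}\rho^{d-2}\,d\rho<\infty$ for $d>1$. This establishes (iii).

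The substantive step is the regularity in (ii): $F\circ\phi^{-1}$ must be twice continuously differentiable near $0$, and here one cannot simply invoke Corollary \ref{coro-4.5}(b), which demands a $C^2$ density. Instead I would fix a small $\epsilon_0$ and split $F(p)=\int_{S^d\setminus B(-\mu;\epsilon_0)}\rho_g^2(p,q)\,Q(dq)+\int_{B(-\mu;\epsilon_0)}\rho_g^2(p,q)\,Q(dq)$. For $p$ near $\mu$ the first integrand is smooth in $p$ with $p$-derivatives bounded uniformly in $q$ (as $q$ is bounded away from $\{-p\}$), so that piece is $C^\infty$. On $B(-\mu;\epsilon_0)$ I would set $\beta(p,q)=\rho_g(-p,q)$ and expand $\rho_g^2(p,q)=(\pi-\beta)^2=\pi^2-2\pi\beta+\beta^2$: the $\pi^2$-term gives the constant $\pi^2Q(B(-\mu;\epsilon_0))$; the $\beta^2$-term gives $\int_{B(-\mu;\epsilon_0)}\beta(p,q)^2f(q)\,\mathrm{vol}(dq)$, which is $C^\infty$ in $p$ near $\mu$ because $\beta(p,q)^2$ is a squared geodesic distance between points off each other's cut loci (hence smooth in $p$ with bounded derivatives over the compact domain) and $f$ is bounded; and the cross-term is $-2\pi\,\mathcal I(p)$ with $\mathcal I(p)=\int_{B(-\mu;\epsilon_0)}\beta(p,q)f(q)\,\mathrm{vol}(dq)$. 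Passing to geodesic normal coordinates $q=\Exp_{-\mu}w$ at $-\mu$, with $-p=\Exp_{-\mu}a$ and $a$ depending smoothly on $p$ and near $0$, gives $\mathcal I(p)=\int\sqrt{G(a,w)}\,\Phi(w)\,dw$, where $G(a,w)=\rho_g(-p,q)^2$ is smooth, nonnegative, comparable to $|w-a|^2$, and vanishes to exactly second order on $\{w=a\}$, while $\Phi(w)=f(\Exp_{-\mu}w)\sqrt{\det g(w)}$ is continuous with compact support. Since $\partial_aG=O(|w-a|)$ and $\partial_a^2G=O(1)$, one has $\partial_a\sqrt G=\frac{\partial_aG}{2\sqrt G}=O(1)$ (so one differentiation under the integral is harmless and $\mathcal I\in C^1$) and $\partial_a^2\sqrt G=\frac{\partial_a^2G}{2\sqrt G}-\frac{(\partial_aG)^2}{4G^{3/2}}=O(1/|w-a|)$, which is locally integrable precisely when $d>1$. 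To legitimize the second differentiation I would bound the second difference quotient (via the mean value theorem) by $\mathrm{dist}(w,[a,a+te_j])^{-1}$ and observe that $\int_{|w-a|\le2|t|}\mathrm{dist}(w,[a,a+te_j])^{-1}\,dw=O(|t|^{d-1})\to0$ exactly when $d>2$ (after rescaling, $\int_{|w'|\le2}\mathrm{dist}(w',[0,e_j])^{-1}\,dw'<\infty$ iff $d>2$), while on $\{|w-a|>2|t|\}$ the difference quotients are dominated by $2|w-a|^{-1}(\sup|\Phi|)\,\mathbf{1}_{\mathrm{supp}\,\Phi}$; a split dominated-convergence argument then yields $\mathcal I\in C^2$, with continuity of the second derivative obtained by the same estimate. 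Hence $F\circ\phi^{-1}\in C^2$ near $0$, and (assuming, as in Theorem \ref{th-2.4}(ii), that its Hessian at $0$ is nonsingular) Theorem \ref{th-2.4} delivers the CLT \eqref{eq-maineq} for $\mu_n$.

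The hard part is this last regularity step: producing a twice continuously differentiable Fr\'echet function near $\mu$ from only a \emph{continuous} density near the cut locus, i.e.\ justifying the second differentiation under the integral sign through the singular kernel $O(1/|w-a|)$. It is precisely there that the dimension hypothesis $d>2$ is indispensable, and it is reassuring that this analytic threshold coincides with the combinatorial requirement $d>c+2$ of Theorem \ref{th-2.4} with $c=0$.
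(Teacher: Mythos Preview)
Your overall route---verifying hypotheses (i), (ii), (iii) of Theorem~\ref{th-2.4} on $S^d$---is exactly what the paper intends: the corollary is stated there without proof, as an immediate consequence of Theorem~\ref{th-2.4}. Your check of (i) via $\mathrm{Cut}(B(\mu;\epsilon))=B(-\mu;\epsilon)$ and the bounded density, yielding $c=0$ and hence the constraint $d>2$, is precisely the intended computation, and your treatment of (iii) is straightforward and correct.

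Where you genuinely go beyond the paper is in (ii). The paper's own regularity result for $F$, Corollary~\ref{coro-4.5}, assumes a \emph{twice continuously differentiable} density near the cut locus, whereas Corollary~\ref{coro-2.5} posits only a \emph{continuous} one; the paper never bridges this gap. Your decomposition $\rho_g^2(p,q)=\pi^2-2\pi\beta+\beta^2$ with $\beta=\rho_g(-p,q)$, isolating the singular contribution in $\mathcal I(p)=\int\beta(p,q)f(q)\,\mathrm{vol}(dq)$ and then controlling the second differentiation under the integral by a split dominated-convergence argument with the $O(1/|w-a|)$ kernel, is a correct and self-contained way to supply the missing step. It is also pleasing that your analytic threshold $d>2$ (needed so that $\int_{|w'|\le 2}\mathrm{dist}(w',[0,e_j])^{-1}\,dw'<\infty$) coincides with the one forced by (i).

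One point you leave open that the paper does address: the nonsingularity of the Hessian $\Lambda$ at $\nu=0$. You flag it as an assumption; the paper asserts it in Remark~\ref{rem-4.4} (and more concretely via Theorem~\ref{th-6.1} and the remark following it, using the explicit formula for $\Lambda$ on a space of constant curvature $C=1$). If you want a complete argument, you could invoke that formula rather than leave it hypothetical.
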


\begin{remark}
 \label{rem-4.4}
 It follows from the proof of Theorem 2.2, specialized to $S^d$ , that the Hessian $\Lambda$ is positive definite.
 \end{remark}

 \begin{remark}

Although it is curious that the proof of Theorem \ref{th-2.4}  does not hold for $d=2$, the authors expect that a proof of Corollary \ref{coro-2.5}  for the case $d=2$ may be given using polar coordinates.  For the moment,  the CLT for $S^2$  is derived only under the support  restriction of Corollary \ref{coro-1}. 
\end{remark}

\begin{remark}
\label{remark-3.8}
 Suppose $\mathcal G$ is a Lie group of isometries on $S^d$, $d>2$. Then the projection $\pi: S^d\rightarrow S^d/\mathcal G$  is a \emph{Riemannian submersion} on $S^d$ onto its quotient space $M= S^d/\mathcal G$ (\cite{gl} , pp. 63-65, 97-99). Let $Q$ be a probability measure on $S^d$ with a twice continuously differentiable density and a Karcher or  intrinsic mean $\mu$. Let $\tilde \mu$ be the projection of $\mu$ and $\tilde \mu$ that of the sample intrinsic (or Karcher) mean $\mu_n$. Then, in local coordinates, the differential of the Fr\'echet function on $M$ vanishes at $\tilde \mu$, because $\pi$ is smooth and the differential of the Fr\'echet function on $S^d$ vanishes at $\mu$. The delta method provides a CLT for the corresponding sample Fr\'echet mean $\tilde \mu_n$  in local coordinates. However, $\tilde \mu$ ($\tilde \mu_n$) are unlikely to be the intrinsic (respectively, sample intrinsic) mean of $\tilde Q$ (respectively,  $\tilde Q_n$) obtained from $Q$ (respectively, $Q_n$) by the projection map.
 \end{remark}

 \begin{remark}
    One may also explore the opposite route for a probability $\tilde Q$ on $M$ with a density and a unique intrinsic/Karcher mean $\tilde \mu$  and a probability $Q$, among a fairly large family of distributions with smooth densities on $S^d$  whose projection on $M$  is $\tilde Q$, such that $Q$ satisfies the hypothesis of Corollary \ref{coro-2.5} with  $\pi(\mu) =\tilde \mu$ . One may then apply the CLT on $S^d$  to derive one on $S^d/\mathcal G$. As an example consider the antipodal map $g(p)=-p$, and $\mathcal G= \{g, \text{identity}\}$. Let $\tilde Q$ be a probability on $M= S^d/\mathcal G = \mathbb RP^d$ (the real projective space) thought of as a probability on the upper hemisphere vanishing smoothly at the boundary, and with a unique intrinsic mean $\tilde \mu=\{\mu,-\mu\}$, where $\mu$ is the Karcher mean of $Q $ (restricted to the hemisphere). This opens a way for CLT's on Kendall's shape spaces as well.

\end{remark}

\begin{remark}
As indicated in Remark \ref{remark-3.8}, one of the significances of a CLT on $S^d$ is that it may provide a  route to intrinsic CLTs on $S^d/\mathcal G$, the space of orbits under a \emph{Lie group $\mathcal G$ of isometries} of $S^d$.  Such spaces include the so-called \emph{axial spaces} (or \emph{real projective spaces} $\mathbb R P^d$), and Kendall type  shape spaces which are important in \emph{shape-based image analysis}.  For the latter spaces, $S^d$ is the so-called \emph{preshape sphere}. Observe  that  the hypothesis (i) of Theorem \ref{th-2.4} may not hold in all such quotient spaces. For example, on $\mathbb R P^d$ one only has the order $O(\epsilon)$ in hypothesis (i) in Theorem \ref{th-2.4}, since the cut locus of the a point in $\mathbb{R} P^d$ is isomorphic to $\mathbb{R} P^{d-1}$.   For Kendall's planar shape space, identified as the \emph{complex projective space} $\mathbb{C}P^{k-2}$, of dimension $d=2k-4$, the volume measure of $Cut\left(B(\mu;\epsilon)\right)$ is $O(\epsilon^{2})$,  since the cut locus of a point of $\mathbb{C}P^{k-2}$ is isomorphic to $\mathbb{C}P^{k-3}$. For these facts one may refer to \cite{gl}, Section 2.114, pp. 102, 103. 
\end{remark}

\begin{remark}
\label{rem-4.17}
For $m>2$, $k>m$, $\Sigma_m^k$ is a stratified space in the intrinsic topology. But the projection $S^{m(k-1)-1}\rightarrow \Sigma_m^k$ (see \eqref{eq-6.1}) is continuous and $\Sigma_m^k$ is a compact metric space.  Hence Theorem \ref{th-clt} still applies to this stratified space. 

\end{remark}

\section{ Nonparametric Inference on General Manifolds  }

Theorems \ref{th-2.1}, \ref{th-2.4} allow us to construct nonparametric confidence regions for intrinsic and extrinsic means of probability measures $Q$ on a manifold $M$, and to carry out nonparametric two-sample tests for the equality of such means of two distributions $Q_1$ and $Q_2$ on $M$.  The latter tests are really meant to distinguish $Q_1$ from $Q_2$. On high dimensional spaces, such as the shape spaces of main interest here, the means are generally good indices for this purpose, as the data examples in Section 10 show.

  For the construction of an extrinsic confidence region for the extrinsic mean $\mu_E$ of $Q $ one may use the corresponding region for $\mu_J$ using \eqref{eq-exclt}   and then transform by $J^{-1}$.  
  The following asymptotic chisquare distribution is an easy consequence of Proposition \ref{prop-4.3}:
\begin{equation}
\label{eq-5.1}
n\left[(d_{\bar{Y}}P) (\bar{Y}-m)\right]'( \hat{B}'\hat{\Sigma}\hat{B})^{-1} \left[(d_{\bar{Y}} P)( \bar{Y}- m) \right]\rightarrow \chi^2_d\;\;\text{in distribution,}
\end{equation}
where  $\chi^2_d$ is the chisquare distribution with $d$ degrees of freedom. Here $\hat{B} = B( \bar{Y})$ estimates $B= B(m)$, and $\hat{\Sigma}$ is the sample covariance matrix of $Y_1,\cdots,Y_n$. The statistic does not depend on the choice of the orthonormal basis of $T_{\bar{Y}} (J(M))$ for computing $\hat{B}$. The relation \eqref{eq-5.1} may be used to construct extrinsic mean $\mu_E=J^{-1}P(m).$  Bootstrapping, which leads to a smaller order of coverage error in the case of an absolutely continuous $Q,$  may not always be feasible if $N $ is large and the sample size $n$ is not sufficiently large to ensure that, with high probability, the bootstrap estimate of the sample covariance matrix is not singular.

   Turning to the (local) intrinsic mean $\mu_I$  of $Q$, Theorem \ref{th-2.4} leads to the asymptotic chisquare distribution
   \begin{equation}
   \label{eq-5.2}
   n[\phi(\mu_n)-\phi(\mu_I)]'\hat{\Lambda}\tilde{\Sigma}^{-1}\hat{\Lambda}[\phi(\mu_n)-\phi(\mu_I)]\rightarrow \chi^2(d)
   \end{equation}
   in distribution as $n\rightarrow \infty$, where $\hat{}$ denotes an estimate with $Q$ replaced by the empirical $Q_n$; that is, the distribution $Q\circ\phi^{-1}$ of $Y_1$ is replaced by $Q_n\circ \phi^{-1}  = n^{-1}\sum_{1\leq i\leq n}\delta_{Y_i}$ . This leads to a confidence region for $\mu_I$. 
   

   We next consider the  the two-sample problem of distinguishing two distributions $Q_1 $ and $Q_2$ on $M$, based on two independent samples of sizes $n_1$ and $n_2$, respectively: $\{Y_{j_1} = J(X_{j_1}): j=1,\cdots,n_1\}, \{Y_{j_2} = J(X_{j_2}): j=1,\cdots,n_2\}$. Hence the proper null hypothesis is $H_0: Q_1 = Q_2.$ For high dimensional $M$ it is often sufficient to test if the two Fr\'echet means are equal. For the extrinsic procedure, again consider an embedding $J$ into $E^N$.  Write $\mu_i$  for $\mu_i^J $ for the population means and $\bar{Y}_i$ for the corresponding sample means on $E^N$ ($i=1,2)$. Let $n = n_1+n_2$, and assume $n_1/n\rightarrow p_1$, $n_2/n\rightarrow p_2 = 1-p_1$, $0<p_i<1 (i=1,2)$, as $n\rightarrow \infty$.  If $\mu_1\neq\mu_2$ then $Q_1 \neq Q_2$ .  One may then test $H_0:  \mu_1 =  \mu_2 (=\mu$, say).  Since $N$ is generally quite large compared to $d$, the direct test for $H_0:  \mu_1 =
   \mu_2 $ based on $\bar{Y}_1-  \bar{Y}_2$ is generally not a good test. Instead, we compare the two extrinsic means  $\mu_{E_1}$ and $\mu_{E_2}$ of $Q_1$ and  $Q_2$ and test for their equality. This is equivalent to testing if $P(\mu_1 )=P( \mu_2 )$.  Then, by \eqref{eq-5.1}, assuming  $H_0$,
   \begin{equation}
\label{eq-5.3} n^{1/2}  d_{\bar{Y}}P(\bar{Y}_1   - \bar{Y}_2 ) \rightarrow N(0,  B(p_1\Sigma_1 + p_2 \Sigma_2)B' )
   \end{equation}
     in distribution, as $n\rightarrow \infty$.
Here $\bar{Y} =  p_1\bar{Y}_1+p_2\bar{Y}_2$ is the \emph{pooled estimate} of the common mean $\mu_1=\mu_2=\mu$, say,  $B = B(\mu$) (see Proposition \ref{prop-4.3} ),  and $\Sigma_1$, $\Sigma_2$ are the covariance matrices of $Y_{j_1}$ and $Y_{j_2}$ . This leads to the asymptotic chisquare statistic below:
\begin{equation}
\label{eq-5.4}
  n[d_{\bar{Y}}P(\bar{Y}_1   - \bar{Y}_2  )]' [\hat{B}'(p_1\hat{\Sigma}_1+ p_2\hat{\Sigma}_2)\hat{B}  ]^{-1}[d_{\bar{Y}}P(\bar{Y}_1   - \bar{Y}_2  )]\rightarrow \chi^2_d
  \end{equation}
in distribution, as $n\rightarrow \infty$.
Here $\hat{B} = B(\bar{Y} )$, $\hat{\Sigma}_i$ is the sample covariance matrix of $Y_{ji}$. One rejects the null hypothesis $H_0$ at a level of significance $1-\alpha$ if and only if the observed value of the left side of \eqref{eq-5.4} exceeds $\chi^2_d(1-\alpha).$

  For the two-sample intrinsic test, let $\mu_{I_1}$, $\mu_{I_2}$ denote the intrinsic means of $Q_1$ and $Q_2$ and consider $H_0:  \mu_{I_1} =  \mu_{I_2}$. Denoting by $\mu_{n_1}$, $\mu_{n_2}$ the  intrinsic sample means, \eqref{eq-5.4} implies that, under $H_0$,
\begin{equation}
\label{eq-5.5}
 n^{1/2}[\phi(\mu_{n_1})- \phi(\mu_{n_2})] \rightarrow N(0, p_1 \Lambda_1^{-1}\tilde{\Sigma}_1 \Lambda_1^{-1}+ p_2 \Lambda_2^{-1}\tilde{\Sigma}_2 \Lambda_2^{-1} )
 \end{equation}
  in distribution,
where  $\phi = Exp_p^{-1}$ for some convenient $p$ in $M$, and $\Lambda_i$, $\tilde{\Sigma}_i$  are as in Theorem \ref{th-2.4} and \eqref{eq-5.4} with the empirical $Q_{n_i}$ in place of $Q_i$ $(i=1,2)$. One simple choice for $p$ is the pooled estimate $\mu_n = p_1 \mu_{n_1} + p_2  \mu_{n_2}$ , and with this choice we write $\hat{\phi}$ for $\phi$.  The test then rejects $H_0:  Q_1 =  Q_2$ , if
\begin{equation}
\label{eq-5.6}
  n[\hat{\phi}(\mu_{n_1})- \hat{\phi}(\mu_{n_2})]' [p_1\hat{\Lambda}_1^{-1}\hat{\tilde{\Sigma}}_1\hat{\Lambda}_1^{-1}+p_2\hat{\Lambda}_2^{-1}\hat{\tilde{\Sigma}}_2\hat{\Lambda}_2^{-1}]^{-1} [\hat{\phi}(\mu_{n_1})- \hat{\phi}(\mu_{n_2})]   >  \chi^2_d(1-\alpha).
\end{equation}

Finally,  consider a \emph{match pair problem } with i.i.d. observations $(X_{j_1},X_{j_2})$  having the distribution $Q$ on the product manifold $M\times M$. If $J $ is an embedding of $M$ into $E^N$, then $\tilde{J}(x,y) = (J(x),J(y))$ is an embedding of $M\times M$ into $E^N \times  E^N$. Let $\mu_{E_1}$, $\mu_{E_2}$  be the extrinsic means of the (marginal) distributions $Q_1$ and $Q_2$ of $X_{j_1}$ and $X_{j_2}$, respectively.  Once again, we are interested in testing $H_0: Q_1 = Q_2$  by checking if $\mu_{E_1}=\mu_{E_2}$ .  Note that the extrinsic mean of $Q$ is $\tilde{\mu}_E = (\mu_{E_1}, \mu_{E_2})$. If $\bar{Y}_1$ , $\bar{Y}_2$ are the sample means of $Y_{j_1} = J(X_{j_1})$, $Y_{j_2}= J(X_{j_2})$, $j=1,\cdots,n$, on $E^N$ with $E(Y_{j_1} )= \mu_1$ and $E(Y_{j_2} ) = \mu_2$, and  $\bar{\tilde{Y}} = (\bar{Y}_1 , \bar{Y}_2 )$,  then the extrinsic sample mean in the image space $\tilde{J}(M\times M)$ is $(P(\bar{Y}_1),P(\bar{Y}_2) )$.  Also, write $ \bar{Y}=  (\bar{Y}_1 + \bar{Y}_2 )/2$. Under $H_0$, $\mu_1=\mu_2=\mu$, say, and one has
\begin{equation}
\label{eq-5.7}
 n^{1/2}d_{\bar{Y}} P(\bar{Y}_1 - \bar{Y}_2 )  \rightarrow N(0, \Sigma_{11} +  \Sigma_{22} - \Sigma_{12} - \Sigma_{21} ).
 \end{equation}
On the right, $\Sigma_{11}$ and $\Sigma_{22}$ are the $d\times d$ covariance matrices of $(d_{\mu} P)(Y_{j_1}-\mu_1)$  and   $(d_{\mu} P)(Y_{j_2}-\mu_2)$,  while $\Sigma_{12}$ is the $d\times d$ cross covariance matrix of $(d_{\mu} P)(Y_{j_1}-\mu_1)$ and   $(d_{\mu} P)(Y_{j_2}-\mu_2)$,  and $\Sigma_{21}  = \Sigma_{12}'$ . As above, one  derives a chisquare test for $H_0 $, using \eqref{eq-5.7} and sample estimates of the covariance matrices.

\section{Intrinsic and Extrinsic Analysis and Curvature }

In this section  we provide explicit expressions of asymptotic dispersions for the intrinsic CLT on general Riemannian manifolds, and relate this to curvature, with applications to the sphere and planar shape spaces.

For intrinsic analysis, consider the function $h(z,y) = \rho_g^2(Exp_pz, Exp_py)$ for $z$, $y$ in $T_pM$, with an appropriate choice of $p$.  One first needs to express explicitly the quantities $D_rh(z,y)$, $D_rD_sh(z,y)$ in  normal coordinates at $p$, i.e., at $z =0\equiv Exp_p^{-1}p$.  For this let $\gamma(s)$ be a (constant speed) geodesic starting at $p$, and $m \in M$. Define the \emph{parametric surface}  $c(s,t)=Exp_m(tExp_m^{-1}\gamma(s))$, $s \in [0,\epsilon)$, $\epsilon> 0$ small. Note that  $c(s,0) = m$ for all $s$, $c(s,1) = \gamma(s)$, and that, for all fixed $s \in [0,\epsilon)$, $t\rightarrow c(s,t)$ is a geodesic starting at $m$ and reaching $\gamma(s)$ at $t=1$.  Writing $T(s,t) = (\partial/\partial t)c(s,t)$, $S(s,t) = (\partial/\partial s)c(s,t)$, one then has $S(s,0) = 0,$ $S(s,1) = \dot\gamma(s)$.  Also, $\langle T(s,t),T(s,t)\rangle$  does not depend on $t$ and, therefore,
\begin{equation}\label{eq-7.1}
 \rho_g^2 (\gamma(s),m) = \int_0^1 \langle T(s,t), T(s,t)\rangle dt.
 \end{equation}
Differentiating this with respect to $s$ and recalling the symmetry $(D/\partial s)T(s,t) = (D/\partial t)S(s,t$) on a parametric surface (See Do Carmo (1992), p. 68, Lemma 3.4), and $(D/\partial t) T(s,t) =0$, one has
\begin{align}\label{eq-7.2}
 (d/ds) \rho_g^2 (\gamma(s),m)& = 2\int_0^1 \langle(D/\partial s)T(s,t), T(s,t)\rangle dt\\ \nonumber
  &= 2\int_0^1 \langle(D/\partial t)S(s,t), T(s,t)\rangle dt=2\int_0^1 (d/dt)\langle S(s,t), T(s,t)\rangle dt\\ \nonumber
  & = 2\langle S(s,1),T(s,1)\rangle  =  -2\langle \dot\gamma(s),  Exp_{\gamma(s)}^{-1}m\rangle.
         \end{align}
Setting $s=0$ in \eqref{eq-7.2} and letting $\dot\gamma(0) = v_r$, with $\{v_r: r=1,\cdots,d\}$ an orthonormal basis of $T_pM$,  one shows that the normal coordinates $y_r$ of $m$ (i.e., the coordinates of $y = Exp_p^{-1}m$  with respect to $\{v_r: r=1,\cdots,d\})$ satisfy
\begin{equation}\label{eq-7.3}
 -2 y^r \equiv -2 \langle Exp_p^{-1}m, v_r\rangle = [(d/ds) \rho_g^2 (\gamma(s),m)]_{s=0}.
 \end{equation}
From this one gets
\begin{equation}
\label{eq-7.4}
 D_rh(0, y) = -2y^r (r=1,\cdots,d).
 \end{equation}
If $Q(Cut(p)) =0$, then writing $\tilde{Q}$ for the distribution induced from $Q$ by the map  $Exp_p^{-1}$ on $T_pM$, the Fr\'echet function and its \emph{gradient} in local coordinates  may be expressed as
\begin{equation}\label{eq-7.5}
F(q) = \int \rho_g^2 (q,m)Q(dm) = \int h(z,y)\tilde{Q}(dy) =\tilde{F}(z), 
\end{equation}
where $z =  Exp_p^{-1}q$ and $D_r\tilde F(z)=-2\int y^r\tilde Q(dy).$
Since a (local) minimum of this is attained at $q =\mu_I$, $\tilde{F}$ must satisfy a first order condition $D_r\tilde{F}(z) =0$ at $z=\nu$. In particular, letting $p= \mu_I$ and, consequently,  $\nu=0$, one has $\int D_rh(0,y)\tilde{Q}(dy) =0$, so that \eqref{eq-7.4} yields
\begin{equation}\label{eq-7.6}
 \int y^r\tilde{Q}(dy) =0 \;(r=1,\cdots,d),\;    (\tilde{Q}= Q\circ \phi^{-1}, \phi= Exp_{\mu_I}^{-1}).
 \end{equation}

By Theorem \ref{th-2.4}, the asymptotic distribution of the sample intrinsic mean $\mu_n$ is that of $\phi^{-1}(\nu_n)$, where $\phi= Exp_p^{-1}$, and
\begin{equation}\label{eq-7.7}
 \sqrt n(\nu_n  -\nu)  \simeq \Lambda^{-1} [(1/\sqrt n)\sum_{1\leq j\leq n} Dh(\nu, Y_j)],\;      ( \Lambda_{rs}= ED_rD_s h(\nu,Y_1), 1\leq r,s\leq d ),
 \end{equation}
with $Y_j =  \phi(X_j)$, where  $X_j$ are  i.i.d. with distribution $Q$.  By \eqref{eq-7.4}, the right side of \eqref{eq-7.7} simplifies to $\Lambda^{-1} [-2(1/\sqrt n)\sum_{1\leq j\leq n} Y_j]$, if $p =\mu_I$ (and $\nu=0$).

For an analytical study of the Hessian $\Lambda$ of the Fr\'echet function, one derives from \eqref{eq-7.14} the relation 
\begin{align}
\frac{d^2}{ds^2}\rho_g^2 (\gamma(s), m)=2\langle D_sT(s,1), S(s,1)\rangle=2\langle D_tS(s,1), S(s,1)\rangle,\\
\left(  D_s=\frac{D}{\partial s}, D_t=\frac{D}{\partial t} \; \text{covariant derivatives }\right).
\end{align}

Using the theory of Jacobi fields (\cite{docarmo}, p.111) the following relations may be derived. Let $C$ denote the supremum of all sectional curvatures of $M$ and let 
\begin{align}
f(t)=\begin{cases}
1& \text{if}\; C=0\\
(\sqrt{C}t)\cos(\sqrt{C}t)/\sin (\sqrt{C}t)&\text{if}\; C>0,\\
\sqrt{-C}t\cosh(\sqrt{-C}t)/\sinh (\sqrt{-C}t)& \text{if}\; C<0.
\end{cases}
\end{align}
Also let  $t_0$ be the supremum of all $t$ such taht $f(t)>0$. For $d\times d$ symmetric matrices $A$, $B$, the order relation $A\geq B$ means $A-B$ is nonnegative definite.

\begin{theorem}[\cite{absrabi1}]
\label{th-6.1}
Assume $ |Y_1|=|\log_{\mu_I}X_1|\leq t_0$ a.s. In addition, if the hypotheses for the CLT in corollary \ref{coro-1} or Theorem \ref{th-2.4} hold, one has
\begin{align}
\label{eq-lambda}
\Lambda=((\Lambda_{ij}))\geq \left(\left(   2E\left(  \frac{1-f|Y_1|}{|Y_1|^2}Y_1^iY_1^j+f(|Y_1|)\delta_{ij}\right)                 \right)\right),
\end{align}
with equality if the sectional curvature is consistent. 
\end{theorem}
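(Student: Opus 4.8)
The plan is to reduce the statement to a pointwise lower bound, in the order $\ge$ on symmetric matrices, for the Hessian matrix $((D_iD_jh(0,y)))$ at the centre of normal coordinates at $p=\mu_I$ of the function $z\mapsto h(z,y)=\rho_g^2(\Exp_{\mu_I}z,\Exp_{\mu_I}y)$; indeed, by \eqref{eq-7.7} one has $\Lambda_{ij}=E\,D_iD_jh(0,Y_1)$, so once the pointwise matrix inequality is in hand, \eqref{eq-lambda} follows by taking expectations over $y=Y_1$ (legitimate since (A4) is part of the hypotheses of Corollary \ref{coro-1} or Theorem \ref{th-2.4}, which also supply the CLT itself). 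So fix $y\in T_{\mu_I}M$, put $m=\Exp_{\mu_I}y$ and $L=|y|=\rho_g(\mu_I,m)$, and fix a unit vector $v\in T_{\mu_I}M$. Because the geodesic $\gamma(s)=\Exp_{\mu_I}(sv)$ is the line $s\mapsto sv$ in normal coordinates, $\sum_{i,j}v^iv^jD_iD_jh(0,y)=\bigl(d^2/ds^2\bigr)\rho_g^2(\gamma(s),m)\big|_{s=0}$, and the second-variation identity displayed just before the statement exhibits this as $2\langle D_tS(0,1),S(0,1)\rangle$, where $S(0,\cdot)$ is the Jacobi field along the minimal geodesic $t\mapsto c(0,t)=\Exp_m\bigl(t\,\Exp_m^{-1}\mu_I\bigr)$, $t\in[0,1]$, determined by $S(0,0)=0$ and $S(0,1)=v$.

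Next I would use the standard splitting $v=v^{\top}+v^{\perp}$ of $v$ into parts tangent and orthogonal to this geodesic at $\mu_I$, with the corresponding orthogonal splitting $S(0,\cdot)=S^{\top}+S^{\perp}$ of the Jacobi field. The radial part solves $D_t^2S^{\top}=0$ with $S^{\top}(0)=0$, so $S^{\top}(t)=tS^{\top}(1)$ and $\langle D_tS^{\top}(1),S^{\top}(1)\rangle=|v^{\top}|^2$, independently of curvature, while the cross terms $\langle D_tS^{\top}(1),S^{\perp}(1)\rangle$ and $\langle D_tS^{\perp}(1),S^{\top}(1)\rangle$ vanish because $S^{\perp}$ and $D_tS^{\perp}$ are orthogonal to $T$ along $c(0,\cdot)$. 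For the orthogonal part set $\phi(t)=|S^{\perp}(t)|$; from the Jacobi equation $D_t^2S^{\perp}+R(S^{\perp},T)T=0$, the curvature bound $\langle R(S^{\perp},T)T,S^{\perp}\rangle\le C\,|T|^2|S^{\perp}|^2=CL^2\phi^2$, and the Cauchy--Schwarz inequality $|D_tS^{\perp}|^2|S^{\perp}|^2\ge\langle D_tS^{\perp},S^{\perp}\rangle^2$, one obtains $\phi''+CL^2\phi\ge0$ on $(0,1]$ with $\phi(0)=0$ (cf. the Jacobi field estimates in \cite{docarmo}, p.~111). Comparing (Sturm/Riccati) with the solution $\psi$ of $\psi''+CL^2\psi=0$, $\psi(0)=0$, $\psi'(0)=\phi'(0)$ --- which stays positive on $(0,1]$ and has $\psi'(1)/\psi(1)=f(L)\ge0$ precisely because $L\le t_0$ --- gives $\phi'(1)/\phi(1)\ge f(L)$; the factor $\sqrt{C}$ appears multiplied by $L=|y|$ inside $f$ exactly because $t\mapsto c(0,t)$ has speed $L$, not $1$. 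Hence $\langle D_tS^{\perp}(1),S^{\perp}(1)\rangle=\phi(1)\phi'(1)\ge f(L)\,|v^{\perp}|^2$, the case $v^{\perp}=0$ being trivial.

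Adding the two contributions,
\[
\sum_{i,j}v^iv^jD_iD_jh(0,y)\ \ge\ 2\bigl(|v^{\top}|^2+f(|y|)\,|v^{\perp}|^2\bigr)\ =\ 2\Bigl(\tfrac{1-f(|y|)}{|y|^2}\,\langle v,y\rangle^2+f(|y|)\,|v|^2\Bigr),
\]
where the last equality uses that the geodesic from $m$ to $\mu_I$ reaches $\mu_I$ with velocity proportional to $-y=-\Exp_{\mu_I}^{-1}m$, so $|v^{\top}|^2=\langle v,y\rangle^2/|y|^2$. Equivalently, $((D_iD_jh(0,y)))\ge 2\bigl(\bigl(\tfrac{1-f(|y|)}{|y|^2}\,y^iy^j+f(|y|)\,\delta_{ij}\bigr)\bigr)$ in the order $\ge$ on symmetric matrices, and taking $E$ over $y=Y_1$ yields \eqref{eq-lambda}. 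When $M$ has constant sectional curvature $C$, the differential inequality $\phi''+CL^2\phi\ge0$ is an identity, hence so is every inequality downstream, and \eqref{eq-lambda} becomes an equality.

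The main obstacle I expect is the Jacobi-field comparison carried out with the $[0,1]$-parametrization of the geodesics $c(s,t)$: one must propagate the speed $L=|y|$ cleanly through the Riccati/Sturm argument (this is what converts a naive $\sqrt C$ into $\sqrt C\,|y|$ in the argument of $f$) and verify that the comparison remains valid all the way to $t=1$, which is exactly where the hypothesis $|Y_1|\le t_0$ enters, so that the model Jacobi field has no zero on $(0,1]$ and $f(|Y_1|)\ge0$. A minor but necessary point is that, since everything is evaluated at the centre of the normal chart at $\mu_I$, the coordinate Hessian $((D_iD_jh(0,y)))$ coincides with the Riemannian Hessian of $q\mapsto\rho_g^2(q,m)$ at $q=\mu_I$, so the second-variation formula \eqref{eq-7.2} indeed produces the matrix that enters \eqref{eq-7.7} and hence $\Lambda$.
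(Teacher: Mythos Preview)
Your proposal is correct and follows the same route the paper indicates: the paper derives the second-variation identity $\tfrac{d^2}{ds^2}\rho_g^2(\gamma(s),m)=2\langle D_tS(s,1),S(s,1)\rangle$ and then simply says ``using the theory of Jacobi fields (\cite{docarmo}, p.~111) the following relations may be derived,'' with the detailed argument deferred to \cite{absrabi1}. You have supplied exactly that Jacobi-field comparison argument---the tangential/normal splitting of $v$, the Rauch-type Sturm comparison for $\phi=|S^{\perp}|$ against the constant-curvature model, and the identification $|v^{\top}|^2=\langle v,y\rangle^2/|y|^2$---so your proof is a faithful fleshing-out of the paper's sketch rather than a different approach.
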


\begin{remark}
It is simple to check that on $M=S^d$, the Hessian $\Lambda$ given by the right side of \eqref{eq-lambda}, with $C=1$, is nonsingular. 

\end{remark}

\begin{remark}
\cite{Kendall2011} obtained the exact expression for the Hessian for the intrinsic Fr\'echet mean on the important case of the planar shape space $\Sigma_2^k$, which has a constant holomorphic curvature.
\end{remark}

\begin{remark}
Note that the relations in \eqref{eq-7.3} provide the gradient of the intrinsic Fr\'echet function $F(p)$ in normal coordinates around $p$. 
\end{remark}

\textbf{Example 6.5.} (Confidence region for the intrinsic mean of $Q$ on the sphere $S^d$). Let $\mu_I$ be the intrinsic mean of $Q$ on $S^d$.  Given $n$ i.i.d. observations  $X_1,\cdots,X_n$ on $S^d$ with common distribution $Q$, let $\mu_n$ be the intrinsic sample mean. Write $\phi= Exp_{\mu_I}^{-1}$, and  $\phi_p= Exp_p^{-1}$, so that $\phi_{\mu_I}=\phi$.  By Theorem \ref{th-clt},
\begin{equation}\label{eq-7.8}
  \sqrt n[\phi(\mu_n)-\phi(\mu_I) )= \sqrt n\phi(\mu_n)\rightarrow N(0,  \Lambda^{-1}\tilde{\Sigma} \Lambda^{-1})\;\text{  in distribution as}\; n\rightarrow\infty,
  \end{equation}
where  the $d\times d $ matrices $\Lambda$ and $\tilde{\Sigma}$ are given by
\begin{align}\label{eq-7.9}
 \tilde{\Sigma} &= 4Cov (\phi(X_1)), \; \\ \nonumber
    \Lambda_{rs}&= 2E [ (1- (X_1^t\mu_I)^2)^{-1} \{1- (1- (X_1^t\mu_I)^2)^{-1/2}\cdot(X_1^t\mu_I)    \arccos(X_1^t\mu_I) \}( X_1^t \nu_r)(X_1^t \nu_s)\\ \nonumber
&+  (1- (X_1^t\mu_I)^2)^{-1/2}\cdot(X_1^t\mu_I) (\arccos(X_1^t\mu_I))) \delta_{rs}  ],   1\leq r,s \leq d.
\end{align}
Here  $\{  \nu_r:1\leq r\leq d\}$ is an orthonormal basis of $T_{\mu_I}S^d$.
A confidence region for $\mu_I$ , of asymptotic level $1-\alpha$, is then given by
\begin{equation}\label{eq-7.10}
\{p \in S^d:   n \phi_p(\mu_n)^t \hat{\Lambda}_p\hat{\tilde{\Sigma}}_p^{-1} \hat{\Lambda}_p\phi_p(\mu_n) \leq \chi^2_d(1-\alpha)\},
\end{equation}
where $\Lambda_p$, $\tilde{\Sigma}_p$ are obtained by replacing $\mu_I$ by $p$ in the expressions for $\Lambda$ and $\tilde{\Sigma}$ in \eqref{eq-7.9} . The  'hat' ( $\hat{}$ ) indicates that the expectations are computed under
the empirical $Q_n$, rather than $Q$.  As mentioned in Section 5, it would be computationally simpler to choose a particular $p =p_0$, say, and let $\phi= Exp_{p_0}^{-1}$. Then \eqref{eq-5.2} yields a simpler confidence region:
\begin{equation}\label{eq-7.11}
\{p \in S^d: n[\phi(\mu_n)-\phi(p)]^t \hat{\Lambda}_{p_0}\tilde{\Sigma}_{p_0}^{-1} \hat{\Lambda}_{p_0} [\phi(\mu_n) )-\phi(\mu_p)]\leq\chi^2_d(1-\alpha)\}.
\end{equation}

\textbf{Example 6.6.} (Inference on the planar shape space $\Sigma_2^k$).  
 

To apply Theorem \ref{th-2.4}, we use \eqref{eq-7.7} where $\phi= Exp _{\sigma(p)}^{-1}$ and $p$ is a suitable point in $\mathbb{C} S^{k-1}$. To derive a computable expression for $\Lambda$, write the geodesic $\gamma$ in the parametric surface $c(s,t)$ as $\gamma=\pi\circ\tilde{\gamma}$, where $\tilde{\gamma}$ is a geodesic in $\mathbb{C} S^{k-1}$   starting at $\tilde{\mu}\in \pi^{-1}\{\mu_I\}$. Then, with $\tilde{T}(s,1) = (d_{\gamma(s)}\pi^{-1} ) T(s,1)$,
\begin{align}
\label{eq-7.14}
 (d/ds) \rho_g^2 (\gamma(s),m)& = 2<T(s,1), \dot\gamma (s)> = 2<\tilde{T}(s,1),\dot{\tilde{\gamma}}(s)> ,\\ \nonumber
           (d^2/ds^2) \rho_g^2 (\gamma(s),m) &= 2<D_s\tilde{T}(s,1), \dot{\tilde{\gamma}}(s)>.
 \end{align}
The final inner products are in $T\mathbb{C} S^{k-1}$  , namely, $\langle \tilde{v},\tilde{w}\rangle = Re(\tilde{v}\tilde{w}^*)$.  Note that    $\tilde{T}(s,1) = -Exp_{\tilde{\gamma}(s)}^{-1} q, \;q\in\pi^{-1}{m}$,  may be expressed by \eqref{eq-6.5} and \eqref{eq-6.6}  as
\begin{equation}\label{eq-7.15}
 \tilde{T}(s,1) = -(\rho(s)/\sin \rho(s))[e^{i\theta(s)}q -(\cos \rho(s))\tilde{\gamma}(s)],
\end{equation}
where $\rho(s) = \rho_g(\gamma(s),m)$ and $e^{i\theta(s)} = (1/\cos \rho(s)) \tilde{\gamma}(s)q^*$ . The covariant derivative $D_s\tilde{T}(s,1)$ is the projection of $(d/ds)\tilde{T}(s,1)$ onto $H_{\tilde{\gamma}(s)}$. Since $\langle\tilde{\mu},\dot{\tilde{\gamma}}(0)\rangle =0$, \eqref{eq-7.14}  then yields
\begin{equation}\label{eq-7.16}
 [(d^2/ds^2) \rho_g^2 (\gamma(s),m)]_{s=0} = 2\langle[(d/ds)\tilde{T}(s,1)]_{s=0}, \dot{\tilde{\gamma}}(0)\rangle.
 \end{equation}
Differentiating \eqref{eq-7.15} one obtains
\begin{align}\label{eq-7.17}
 &[(d/ds)\tilde{T}(s,1)]_{s=0} = [(d/ds) (\rho(s)\cos \rho(s))/\sin \rho(s) )]_{s=0}\tilde{\mu}\\ \nonumber
  &+ [(\rho(s)cos \rho(s))/sin \rho(s) )]_{s=0} \dot{\tilde{\gamma}}(0) - [(d/ds) (\rho(s)/(\cos \rho(s))(\sin \rho(s))]_{s=0} (\tilde{\mu}q^*)q\\ \nonumber
  &- [\rho(s)/(\cos \rho(s))(\sin \rho(s))]_{s=0} (\dot{\tilde{\gamma}}(0)q*)q.
\end{align}
From \eqref{eq-7.14}, $2\rho(s)\dot\rho(s) = 2\langle\tilde{T}(s,1), \tilde{\gamma}'(s)\rangle$, which along with \eqref{eq-7.15} leads to
\begin{equation}\label{eq-7.18}
 [(d/ds)\rho(s)]_{s=0} = -(1/\sin r) \langle(\tilde{\mu}q^*/\cos r)q ,\dot{\tilde{\gamma}}(0) \rangle,\; (r= \rho_g(m,\mu_I) ).
 \end{equation}
One then gets (See Bhattacharya and Bhattacharya (2008), pp. 93, 94 )
\begin{align}\label{eq-7.19}
&\langle[(d/ds)\tilde{T}(s,1)]_{s=0} ,\dot{\tilde{\gamma}}(0)\rangle =  \{(r\cos r)/(\sin r)\}|\dot{\tilde{\gamma}}(0)|^2 \\ \nonumber
 &- \{(1/\sin^2 r) -(r\cos r)/\sin^3r\}(Re(x))^2  + r/((\sin r)(\cos r) ) (Im(x))^2,\\ \nonumber
         &(x = e^{i\theta}q\dot{\tilde{\gamma}}(0)^*,  e^{i\theta} =\tilde{\mu}q^*/\cos r ).
         \end{align}
One can check that the right side of \eqref{eq-7.19} depends only on $\pi(\tilde{\mu})$ and not any particular choice of $\tilde{\mu}$ in $\pi^{-1}\{\mu_I \}$.

Now let $\{\nu_1,\cdots,\nu_{k-2}, i\nu_1,\cdots,i\nu_{k-2}\}$ be an orthonormal basis of  $T_{\sigma(p)} \Sigma_2^k$ where we identify $\Sigma_2^k$  with $\mathbb{C}P^{k-2}$, and choose the unit vectors $\nu_r = (\nu_r^1,\cdots, \nu_r^{k-1})$, $r=1,\cdots,k-2$, to have  zero imaginary parts and satisfy the conditions  $p^*\nu_r =0,$ $\nu_r^t \nu_s =0$ for $r \neq s.$

  Suppose now that $\sigma(p)=\mu_I$, i.e., $\gamma(0) = \mu_I$. If $\dot\gamma(0) =v$, then $\gamma(s) = Exp_{\mu_I}(sv)$, so that $\rho_g^2 (\gamma(s),m)=h(sv, y)$ with $y = Exp_{\mu_I}^{-1} m$. Then, expressing $v$ in terms of the orthonormal basis,
\begin{equation}\label{eq-7.20}
  [(d^2/ds^2) \rho_g^2 (\gamma(s),m)]_{s=0} = [(d^2/ds^2)h(sv,y)]_{s=0} = \Sigma v_iv_jD_iD_jh(0,y).
  \end{equation}
Integrating with respect to $Q$ now yields
\begin{equation}\label{eq-7.21}
\sum v_iv_j  \Lambda_{ij} = E[(d^2/ds^2) \rho_g^2 (\gamma(s),X)]_{s=0} ,\;   \;(X\;\text{ with distribution}\; Q).
\end{equation}
This identifies the matrix $\Lambda$ from the calculations \eqref{eq-7.16} and \eqref{eq-7.19}.  To be specific, consider independent  observations $X_1,\cdots,X_n$ from $Q$, and let $Y_j= Exp_{\mu_I}^{-1}X_j (j=1,\cdots,n)$.  In normal coordinates with respect to the above basis of $T_{\mu_I}\Sigma_2^k$, one has the following coordinates of $Y_j$:
\begin{equation}\label{eq-7.22}
(Re(Y_{j}^1),\cdots, Re(Y_{j}^{k-2}), Im(Y_{j}^{1}),\cdots, Im(Y_{j}^{k-2}) ) \in \RR^{2k-4}.
\end{equation}
Writing

$$\Lambda=\begin{pmatrix}
 \Lambda_{11}     &        \Lambda_{12}\\

   \Lambda_{21}     &       \Lambda_{22}
 \end{pmatrix}$$
in blocks of $(k-2)\times(k-2)$ matrices, one arrives at the following expressions of the elements of these matrices, using \eqref{eq-7.19}- \eqref{eq-7.22}. Denote $\rho_g^2(\mu_I,X_1)=h(0,Y_1)$ by $\rho$. Then
\begin{align}\label{eq-7.23}
(\Lambda_{11})_{rs}&= 2E[\rho(\cot \rho)\delta_{rs}  -(1/\rho^2)( 1-\rho\cot \rho)(ReY_1^r)(ReY_1^s)\\ \nonumber
                          &+ \rho^{-1}(\tan \rho) (ImY_1^r)(ImY_1^s)];\\ \nonumber
(\Lambda_{22})_{rs}&=2E[\rho(\cot \rho)\delta_{rs}  -(1/\rho^2)( 1-\rho\cot \rho)(ImY_1^r)(ImY_1^s)\\ \nonumber
                          &+ \rho^{-1}(\tan \rho) (ReY_1^r)(ReY_1^s)];\\ \nonumber
(\Lambda_{12})_{rs}&=2E[\rho(\cot \rho)\delta_{rs}   -(1/\rho^2)( 1-\rho\cot \rho)(ReY_1^r)(ImY_1^s)\\ \nonumber
                          &+ \rho^{-1}(\tan \rho) (ImY_1^r)(ReY_1^s)]; \\ \nonumber
                           &(\Lambda_{21})_{rs} = (\Lambda_{12})_{sr}. (r,s =1,\cdots, k-2).
                          \end{align}
One now arrives at the  CLT for the intrinsic sample mean $\mu_n $ by Theorem \ref{th-2.4} and Corollary \ref{coro-1}.  A two-sample test for $H_0:$ $ Q_1=Q_2$, is then provided by \eqref{eq-5.2}.

  We next turn to extrinsic analysis on $\Sigma_2^k$, using the embedding \eqref{eq-6.9}.  Let $\mu_J$ be the mean of $Q\circ J^{-1}$ on $S(k-1, \mathbb{C})$, where $J$ is the veronese-Whitney embedding \eqref{eq-6.9}.

  Assuming that the largest eigenvalue of $\mu_J$ is simple (see proposition \ref{prop-3.1}), one may now obtain the asymptotic distribution of the sample extrinsic mean $\mu_{n,E}$, namely, that of $J(\mu_{n,E}) =v_n^*v_n$, where $v_n$ is a unit eigenvector of $\bar{\tilde{Y}}= \sum \tilde{Y}_j/n $  corresponding to its largest eigenvalue. Here $\tilde{Y}_j = J(Y_j)$, for i.i.d observations $Y_1,\cdots,Y_n$ on $\Sigma_2^k$. For this purpose, a convenient orthonormal basis (frame) of $T_p S(k-1, \mathbb{C})\simeq S(k-1, \mathbb{C})$ is the following:
\begin{align}\label{eq-7.25}
 \nu_{a,b}  & = 2^{-1/2}(e_ae_b^t+e_be_a^t )\;\text{ for}\; a<b,  \nu_{a,a}=e_ae_a^t;\\ \nonumber
              w_{a,b} &=i2^{-1/2}(e_ae_b^t-e_be_a^t )\;\text{ for}\; b<a\;   (a,b = 1,\cdots, k-1),
              \end{align}
where $e_a$ is the column vector with all entries zero other than the $a$-th, and the $a$-th entry is 1. Let $U_1,\cdots,  U_{k-1}$  be orthonormal unit eigenvectors corresponding to the eigenvalues $\lambda_1\leq\cdots\leq  \lambda_{k-2} < \lambda_{k-1}$.  Then choosing  $T = (U_1,\cdots, U_{k-1}   )\in SU(k-1)$  $T\mu_J T^*= D= diag(\lambda_1,\cdots,\lambda_{k-1})$, such that the columns of $T \nu_{a,b}   T^*$ and $Tw_{a,b} T^*$ together constitute an orthonormal basis of $S(k-1, \mathbb{C})$. It is not difficult to check that the differential of the projection operator $P$ satisfies

\begin{align}\label{eq-7.26}
(d_{\mu_J}P)Tv_{a,b}T^* =\begin{cases} 0&\text{ if}\; 1\leq a\leq b <k-1,\;\text{ or}\; a=b=k-1,\\
                                  ( \lambda_{k-1}-\lambda_a)^{-1}Tv_{a,k-1}T^* &\text{if}\; 1\leq a<k-1, b=k-1;
                                   \end{cases}
                                   \end{align}

\begin{align*}
(d_{\mu_J}P)Tw_{a,b}T^*= \begin{cases}0 & \text{if}\; 1\leq a\leq b <k-1,\\
                                     (\lambda_{k-1}-\lambda_a)^{-1}Tw_{a,k-1}T^* &\text{ if}\; 1\leq a <k-1.
                                      \end{cases}\end{align*}

To check these, take the projection of a linear curve $c(s$) in $S(k-1, \mathbb{C})$ such that $\dot c(0)$ is one of the basis elements $v_{a,b}$, or $w_{a,b}$, and differentiate the projected curve with respect to $s$. It follows that $\{ Tv_{a,k-1}T^*,  Tw_{a,k-1}T^* : a=1,\cdots, k-2\}$ form an orthonotmal basis of $T_{P(\mu_J)}J(\Sigma_2^k $).   Expressing $\tilde{Y}_j-\mu_J $ in the orthonormal basis of     $S(k-1, \mathbb{C}$),  and $d_{\mu_J}P(\tilde{Y}_j-\mu_J)$ with respect to the above basis of $T_{P(\mu_J)}J(\Sigma_2^k $), one may now apply Proposition \ref{prop-4.3}.

   For a two-sample test for $H_0: Q_1 =Q_2$, one may use \eqref{eq-5.4}, as explained in Section 5.

\section{Nonparametric Bayes Estimation of Densities on a Manifold and the Problem of Classification.}


\subsection{Density estimation}

Consider the problem of estimating the density $q$ of a distribution $Q$ on a  Riemannian manifold $(M,g)$ with respect to the volume measure $\lambda$ on $M.$  According to \cite{ferg73},  given a finite non-zero base measure $\alpha$ on a measurable space $(\mathcal X,\Sigma),$ a random probability $P$ on the class $\mathcal P$ of all probability measures on $\mathcal X $ has the \emph{Dirichlet distribution} $D_\alpha$ if for every measurable partition $\{B_1,\dots, B_k\}$ of $\mathcal X,$ the $D_\alpha$ - distribution of
$(P(B_1),\dots, P(B_k)) = (\theta_1,\dots,\theta_k),$ say, is Dirichlet with parameters
$(\alpha(B_1),\dots, \alpha(B_k)).$ That is, $\left(P(B_1),\ldots, P(B_{k-1}\right)$ has the joint density $f(\theta_1,\ldots, \theta_{k-1})=\text{const}\left(\theta_1^{\alpha(B_1)-1}\ldots\theta_{k-1}^{\alpha(B_k-1)-1} \right)\left(1-\theta_1-\ldots-\theta_{k-1}\right)^{\alpha(B_k)-1}$ on $\{(\theta_1,\ldots, \theta_{k-1}): \theta_i>0 \forall i, \; \theta_1+\ldots\theta_{k-1}<1\}$. If $\alpha(B)=0$ for some $B_j$, then $P(B_j)=0$ with probability 1. In the case $k=2$, the $D_{\alpha}$-distribution of $(P(B_1),P(B_2))$ is also called the \emph{beta distribution}, denoted $\text{beta}(\alpha(B_1),\alpha(B_2))$.
\cite{sethu} gave a very convenient ``stick breaking" representation of the random $P.$ To define it, let $u_j (j=1,\dots)$ be an i.i.d. sequence  of $beta (1, \alpha(\mathcal X))$ random variables, independent of a sequence $Y_j(j=1,\dots)$ having the distribution $G = \frac{\alpha}{\alpha(\mathcal X)}$ on $\mathcal X.$  Sethuraman's representation of the random probability with the Dirichlet prior distribution $D_\alpha$ is
\begin{equation}
\label{sethu}
P \equiv \sum w_j \delta_{Y_j} ,
\end{equation}
where $w_1 =u_1, w_j = u_j(1-u_1)\dots(1-u_{j-1}) (j=2,\dots),$ and $\delta_{Y_j}$ denotes the Dirac measure at $Y_j.$ As this construction shows, the Dirichlet distribution assigns probability one to the set of all discrete distributions on $\mathcal X,$ and one cannot retrieve a density estimate from it directly. The Dirichlet priors constitute a conjugate family, i.e., the posterior distribution of a random $P$ with distribution $D_\alpha,$ given observations $X_1,\dots,X_n$ from $P$ is $D_{\alpha +\sum_{1 \le i \le n} \delta_{X_i}} .$ A general method for Bayesian density estimation on a manifold $(M,g)$ may be outlined as follows. Suppose that $q$ is continuous and positive on $M.$ First find a parametric family of densities $m\to K(m;\mu,\tau)$ on $M$ where $\mu \in M$ and $\tau >0$ are ``location" and ``scale" parameters, such that $K$ is continuous in its arguments, $K(\cdot;\mu,\tau)d\lambda(\cdot)$ converges to $\delta_\mu$ as $\tau \downarrow 0,$ and the set of all ``mixtures" of $K(\cdot;\mu,\tau)$ by distributions on $M\times (0,\infty)$ is
dense in the set $C_\lambda(M)$ of all continuous densities on M in the supremum distance, or in $L^1(d\lambda).$ The density $q$ may then be estimated by a suitable mixture. To estimate the mixture, use a prior $D_\beta$ with full support on the set of all probabilities on the space $M\times (0,\infty)$ of ``parameters" $(\mu,\tau).$ A draw from the prior may be expressed in the form \eqref{sethu}, where $u_j$ are i.i.d. $beta (1, b)$ with
$b = \beta(M\times(0,\infty)),$ independent of $Y_j =(m_j,t_j),$ say, which are i.i.d. $\beta\over b$ on
$M\times (0,\infty).$ The corresponding random density is then obtained by integrating the kernel $K$ with respect to this random mixture distribution,
\begin{equation}
\label{mixt}
\sum w_jK(m; m_j,t_j).
\end{equation}

Given $M$-valued ($Q$-distributed) observations $X_1,\dots X_n,$ the posterior distribution of the mixture measure is Dirichlet $D_{\beta_X} ,$ where $\beta_X =\beta + \sum_{1\le i\le n}\delta_{Z_i} ,$ with $Z_i = (X_i,0).$ A draw from the posterior distribution leads to the random density in the form \eqref{mixt}, where $u_j$ are i.i.d. $beta (1, b+n),$ independent of $(m_j,t_j)$ which are i.i.d. $\beta_X\over{(b+n)}.$  One may also consider using a somewhat different type of priors such as  $D_\alpha \times \pi$ where $D_\alpha$ is a Dirichlet prior on $M,$ and $\pi$ is a prior on $(0,\infty),$ e.g., gamma or Weibull  distribution.

Consistency (weak consistency) of the posterior is generally established by checking full Kullback-Liebler support of the prior $D_\beta$ (See \cite{ghosh2003bayesian}, pp. 137-139). Strong consistency has been established for the planar shape spaces using the complex Watson family of densities (with respect to the volume measure or the uniform distribution on $\Sigma_2^k$) of the form $K([z];\mu,\tau) = c(\tau)exp{|z*\mu|^2\over\tau}$ in \cite{rabibook} and \cite{david1}, where it has been shown, by simulation from known distributions, that, based on a prior $D_\beta \times \pi$ chosen so as to produce clusters close to the support of the observations,  the Bayes estimates of quantiles and other indices far outperform the kernel density estimates (KDE) of \cite{bruno}, and also require much less computational  time than the latter. In moderate sample sizes, the nonparametric Bayes estimates perform much better than even the MLE (computed under the true model specification)!

\subsection{Classification.}
Classification of a random observation to one of several groups is one of the most important problems in statistics. This is the objective in medical diagnostics, classification of subspecies and, more generally, this is the target of most image analysis. Suppose there are r groups or populations with a priori given relative sizes or proportions $\pi_i (i=1, \dots,r), \sum \pi_i =1,$ and densities $q_i(x)$ (with respect to some sigma-finite measure). Under $0-1$ loss function, the average risk of misclassification (i.e., the Bayes risk) is minimized by the rule: Given a random observation $X$, classify it to belong to group $j$ if
$\pi_j q_j(X) = \max\{\pi_i q_i(X): i = 1,\dots,r\}$. Generally, one uses sample estimates of $\pi_i$-s and $q_i$ - s, based on random samples from the $r$ groups (training data). Nonparametric Bayes estimates of $q_i$-s on shapes spaces perform very well in classification of shapes, and occasionally identify outliers and misclassified observations (See, \cite{rabibook} and \cite{david1}).

 In a simulation study using 20 random draws from a complex Watson distribution, \cite{david1} found that the nonparametric Bayes estimate far outperformed the kernel density estimate KDE over a multitude of criteria.  It also performed much better than the MLE of the correctly specified model!  Here are the $L^1$ distances and the Kullback-Leibler divergences from $f_0$.
\begin{table}[h]
    \caption{$L^1$ Distance and Kullback-Leibler Divergence Between the Estimate and the True Density }
      \label{tb-NPbayes}
  \centering
\begin{tabular}{|p{4em}|p{5em}p{4em}p{4em}|}
  \hline
  & NP Bayes       &         KDE        &             MLE\\
  \hline
   $ L^1$     &              0.44      &                1.03   &                   0.75\\
   K-L     &             0.13         &           0.41  &                     0.25 \\
   \hline
\end{tabular}

\end{table}

\section{The Laplace-Beltrami Operator in Machine Vision}

Mark Kac asked in a paper in 1966 in the American Mathematical Monthly : ``Can one hear the shape of a drum?". In other words, by listening to the frequencies of vibrations of a clamped drum, given by the eigenvalues of the Laplacian with Dirichlet (or zero) boundary condition, is it possible to reconstruct or identify the geometric shape of the drum?  The origin of this question may be traced back to Hermann Weyl's famous formula \citep{Weyl1911} : For any bounded domain $\Omega$ in $\mathbb \R^d$ with a smooth boundary, the number $N(\lambda)$ of eigenvalues of  the  (negative) Laplacian $-\bigtriangleup$  which are less than $\lambda$ has the asymptotic relation
          \begin{align}
          \label{eq-spectrum}
          N(\lambda) \sim \omega_d(2\pi)^{-d} \lambda^{d/2} \text{vol}(\Omega) \;\text{ as}\;  \lambda\rightarrow\infty\;\;           (\omega_d= \text{vol of unit ball in}\; \mathbb{R}^d).
          \end{align}
Here the relation $\sim$ indicates that the ratio of its two sides converges to 1 (as $\lambda \rightarrow \infty$).  A similar formula holds for any $d$-dimensional compact Riemannian manifold $(M,g)$ with or without boundary where $\bigtriangleup$
is the so-called \emph{Laplace Beltrami operator} \citep{chavel1984eigenvalues, rosenberg1997laplacian}), which may be expressed in a local chart given by $u$ (on $B(0,r) \rightarrow U \subset M)$ as
          \begin{align}
          \label{eq-laplace}
            \bigtriangleup  f = (det\; g)^{-1/2}\sum_{1\leq i,j\leq d} \partial_ig^{ij}(det\; g)^{1/2}\partial_jf.
            \end{align}
Weyl type spectral asymptotics for $-\bigtriangleup$ are given by
     \begin{align}
     \label{eq-112}
     N(\lambda) \sim c(d) \lambda^{d/2} \text{vol}(\Omega) \; as \; \lambda\rightarrow \infty, \;
     \text{where}\; c(d) \;\text{depends only on the dimension}\; d.
     \end{align}
There are many refinements of the estimates \eqref{eq-spectrum}, \eqref{eq-11} with an error term of the order $\lambda^{(d-1)/2}$.  Although there are many spectral invariants of the manifold, it turns out unfortunately, that the answer to Kac's question is ``no" \citep{Milnor01041964}. For two-dimensional surfaces, the answer is mostly ``yes" outside a relatively small set of manifolds \citep{Zelditch2000}. But in dimension 3 or higher the set of non-isometric manifolds with the same spectrum is not negligible. A natural question that arises is: if one uses eigenfunctions as well as eigenvalues of $-\bigtriangleup$ can one reconstruct or identify the manifold?  One may find many interesting and important articles in computer science/machine vision journals where such reconstructions are  displayed.  But the mathematical question posed above is rigorously answered in the affirmative only by \cite{Jones08}, under only mild conditions on the manifold, such as uniform ellipticity of the Laplacian.  The last mentioned authors actually construct coordinate patches covering $M$, and therefore the structure of the manifold, only using eigenvalues and eigenfunctions of $-\bigtriangleup$.

There are many numerical methods for the computation of eigenvalues and eigenfunctions carried out by computer scientists and applied for object identification and scene recognition ( See, e.g. \cite{reuter06}).  To economize the use of these features sometimes topological properties of $M$ are also used. For example, see \cite{Dey:2008}, \cite{dey09} who use the first homology group to identify handles and holes in a closed bounded domain in 3D. For a more elaborate technique using algebraic topology, known as persistent homology, we refer to \cite{Carlsson09}.

\section{Examples and Applications}
\label{sec-ex}

In this section we apply the theory to a number of data sets available in the literature.

\textbf{Example 9.1.}  (Paleomagnetism). The first statistical confirmation of the shifting of the earth's magnetic poles over geological times, theorized by paleontologists based on observed fossilised magnetic rock samples, came in a seminal paper by R.A. Fisher (1953).  Fisher analyzed two sets of data - one recent (1947-48) and another old (Quaternary period),  using the so-called \emph{von Mises-Fisher model}
\begin{equation}
\label{eq-8.1} f(x; \mu, \tau) = c(\tau)\exp\{\tau x^t\mu\}   (x\in S^2),
\end{equation}
Here  $\mu (\in S^2)$, is the \emph{mean direction}, extrinsic as well as intrinsic ($\mu=\mu_I=\mu_E)$, and $\tau>0$ is the concentration parameter. The maximum likelihood estimate of $\mu$ is $\hat{\mu} =\bar{X}/| \bar{X}|$, which is also our sample extrinsic mean. The value of the MLE for the first data set of $n=9$ observations turned out to be $\hat{\mu} = \hat{\mu}_E=(.2984,.1346,.9449)$, where (0,0,1) is the geographic north pole.  Fisher's $95\%$ confidence region for $\mu$ is $\{\mu \in S^2: \rho_g(\hat{\mu}, \mu) \leq 0.1536)\}$.  The sample intrinsic mean is $\hat{\mu}_I= (.2990,.1349,.9447)$, which is very close to $\hat{\mu}_E$.The nonparametric confidence region based on $\hat{\mu}_I$, as given by  \eqref{eq-7.10},  and that based on the extrinsic procedure \eqref{eq-5.4}, are nearly the same, and both are about $10\%$ smaller in area than Fisher's region. (See \cite{rabibook}, Chapter 2).

The second data set based on $n=29$ observations from the Quaternary period that Fisher analyzed, using the same parametric model as above, had the MLE $\hat{\mu} = \bar{X}/| \bar{X}|= (.0172, -.2978, -.9545)$, almost antipodal of that for the first data set, and with a confidence region of geodesic radius .1475 around the MLE. Note that the two confidence regions are not only disjoint, they also lie far away from each other. This provided the first statistical confirmation of the hypothesis of shifts in the earth's magnetic poles, a result hailed by paleontologists (See \cite{Irvingbook}). Because of difficulty in accessing the second data set, the nonparametric procedures could not be applied to it.  But the analysis of another data set dating from the Jurassic period, with $n=33$,  once again yielded nonparametric intrinsic and extrinsic confidence regions very close to each other, and each about $10\%$ smaller than the region obtained by Fisher's parametric method (See \cite{rabibook}, Chapter 5, for details).

\textbf{Example 9.2.} (Brain scan of schizophrenic and normal patients). We consider an example from Bookstein (1991) in which 13 landmarks were recorded on a midsagittal two-dimensional slice from magnetic brain scans of each of 14 schizophrenic patients and 14 normal patients.  The object is to detect the   deformation, if any, in the shape of the $k$-ad due to the disease, and to use it for diagnostic purposes. The shape space is $\Sigma_2^{13}.$  The intrinsic two-sample test  \eqref{eq-5.6} has an observed value 95.4587 of the asymptotic chisquare statistic with 22 degrees of freedom, and a $p$-value  $3.97\times 10^{-11}$. The extrinsic test based on \eqref{eq-5.6} has an observed value 95.5476 of the chisquare statistic and a p-value $3.8\times10^{-11}$.  
 It is remarkable, and reassuring, that completely different methodologies of intrinsic and extrinsic inference essentially led to the same values of the corresponding asymptotic chisquare statistics (a phenomenon observed in other examples as well). For details of these calculations and others we refer to \cite{rabibook}. This may also be contrasted with the results of parametric inference in the literature for the same data, as may be found in \cite{dimk},  pp. 146, 162-165. Using a isotropic Normal model for the original landmarks data, and after  removal of ``nuisance" parameters for translation, size and rotation, an $F$-test known as Goodall's $F$-test (See \cite{bf}) gives a $p$-value .01.  A Monte Carlo test based permutation test obtained by 999 random assignments of the data into two groups and computing Goodall's $F$-statistic, gave a $p$-value .04. A Hotelling's $T^2$ test in the tangent space of the pooled sample mean had a $p$-value .834. A likelihood ratio test based on the isotropic offset Normal distribution on the shape space has the value 43.124 of the chisquare statistic with 22 degrees of freedom, and a $p$-value .005.

\textbf{Example 9.3.}  (Shapes of Gorilla Skulls)


We consider another example in which  two planar shape distributions via their extrinsic (and intrinsic) means  are distinguished. A  Bayesian nonparametric classifier is also  built and applied.

In this data set, there are 29 male and 30 female gorillas and the eight landmarks are chosen on the midline plane of the 2D image of the skull. The data can be found in \cite{dimk}. It is of interest to study the shapes of the skulls and use that to detect differences in shapes between the sexes. This finds applications in morphometrics and other biological sciences.



%

To distinguish between  the distribution of shapes of skulls of the two sexes, one may compare the sample extrinsic mean shapes or dispersions in shape as well as the intrinsic couterparts. 

The value of the two sample test statistic defined in \eqref{eq-5.6},  for comparing the intrinsic mean shapes, and the asymptotic p-value for the chi-squared test are
\begin{align*}
 T_{n1} =  391.63,\t{ p-value } = P(\mathcal{X}^2_{12} > 391.63) < 10^{-16}.
\end{align*}
Hence we reject the null hypothesis that the two sexes have the same intrinsic mean shape.
The  test statistics, defined in equations \eqref{eq-5.4} for comparing the extrinsic mean shapes, and the corresponding asymptotic p-values are
\begin{align*}
 T_1 &= 392.6, \t{ p-value } = P(\mathcal{X}^2_{12} > 392.6) < 10^{-16}.
\end{align*}
Hence we reject the null hypothesis that the two sexes have the same extrinsic mean shape. We can also compare the mean shapes by pivotal bootstrap method using the test statistic $T_2^*$ which is a bootstrap version of $T_2$. The p-value for the bootstrap test using  $10^5$ simulations turns out to be 0. In contrast, a parametric test carried out in \cite{dimk}, pp. 168-172, has a p-value .0001.

%
%

\begin{table}
\begin{minipage}{188pt}
\caption{Posterior probability of being female for each gorilla in the test sample.}
\label{egtab3}
\begin{tabular}{ccccc}
\\
gender &$\hat{p}([z])$ &95\% CI &$d_E([z_i],\hat\mu_1)$ &$d_E([z_i],\hat\mu_2)$ \\
F           &    1.000    &(1.000,1.000)     &0.041       &0.111         \\
F           &    1.000    &(0.999,1.000)       &0.036      &   0.093         \\
F           &    0.023   &(0.021, 0.678)     &0.056       &   0.052         \\
F           &    0.998    & (0.987, 1.000)  &0.050      &   0.095          \\
F           &   1.000    & (1.000, 1.000)     &0.076      &   0.135          \\
M           &   0.000     & (0.000, 0.000)     &0.167      &   0.103          \\
M           &   0.001     & (0.000, 0.004)  &0.087       &   0.042          \\
M           &   0.992     & (0.934, 1.000) &0.091      &   0.121          \\
M           &   0.000     & (0.000, 0.000)     &0.152      &   0.094          \\
\end{tabular}
$\hat{p}([z])$ = estimated prob. of being female, given shape $[z]$;
$d_E([z],\hat\mu_i)$ = extrinsic distance from the mean shape in group $i$, with $i=1$ for females and
$i=2$ for males
\end{minipage}
\end{table}


A Bayesian nonparametric classifier is next applied (see \cite{david1}) to predict gender. The shape densities for the two groups via non-parametric Bayesian methods are estimated which are used to derive the conditional distribution of gender given shape.  25 individuals of each gender are picked as a training sample, with the
remaining 9 used as test data. Table~\ref{egtab3} presents the estimated posterior probabilities of being female for each of the gorillas in the test sample along with a 95\% credible interval. For most of the gorillas, there is a high posterior probability of assigning the correct gender.  There is misclassification only in the 3rd female and 3rd male. For the 3rd female, the credible interval includes 0.5, suggesting that there is insufficient information to be confident in the classification.  However, for the 3rd male, the credible interval suggests a high degree of confidence that this individual is female.  Perhaps this individual is an outlier and there is something unusual about the shape of his skull, with such characteristics not represented in the training data, or, alternatively, he was labeled incorrectly.

\textbf{Example 9.4} (Corpus Callosum shapes of normal and ADHD children)

We consider the third planar  shape data set,  which involve measurements of a group typically developing children and a group of children suffering the ADHD (Attention deficit hyperactivity disorder).  ADHD  is one of the most common psychiatric  disorders for children that can continue through adolescence and adulthood. Symptoms include difficulty staying focused and paying attention, difficulty controlling behavior, and hyperactivity (over-activity). ADHD  in general has three subtypes: (1) ADHD hyperactive-impulsive (2) ADHD-inattentive; (3) Combined hyperactive-impulsive and inattentive  (ADHD-combined) \cite{ADHDtype}.  ADHD-200 Dataset (\url{http://fcon_1000.projects.nitrc.org/indi/adhd200/}) is a data set that  record both anatomical and resting-state functional MRI data of 776 labeled subjects across 8 independent imaging sites, 491 of which were obtained from typically developing individuals and 285 in children and adolescents with ADHD (ages: 7-21 years old).  
 The  Corpus Callosum shape data are extracted  using the CCSeg package, which contains 50 landmarks on the contour of the Corpus Callosum of each subject (see \cite{hongtu15}). 
 After quality control,  647 CC shape data out of 776 subjects were obtained, which included 404 ($n_1$) typically developing children, 150 ($n_2$) diagnosed with ADHD-Combined, 8 ($n_3$) diagnosed with ADHD-Hyperactive-Impulsive,  and 85 ($n_4$) diagnosed with ADHD-Inattentive. Therefore, the data lie in the space $\Sigma_2^{50}$, which has a high dimension of $2\times 50-4=96$.   
 

    We carry out  \emph{extrinsic two sample tests}  between the group of typically developing children and the group of children diagnosed with ADHD-Combined, and also between the group of typically developing children  and ADHD-Inattentive children. We construct test statistics that  base on the asymptotic distribution of the extrinsic mean for the planar shapes.
    
%

The $p$-value for the  two-sample test between the group of typically developing children and the group of children diagnosed with ADHD-Combined is $5.1988\times 10^{-11}$, which is based on the asymptotic chi-squared distribution given in \eqref{eq-5.4}. The $p$-value for the test between the group of typically developing children and the group  ADHD-Inattentive children is smaller than $10^{-50}$.  


\textbf{Example 9.5} (Positive definite matrices with application to diffusion tensor imaging.)

Another important class of manifolds is  $\sym^{+}(p)$, the space of $p\times p$ positive definite matrices. In particular, when $p=3$, $\sym^{+}(3)$, the space of  $3\times 3$  positive definite matrices, has important applications  in diffusion tensor imaging (DTI). DTI, is now an important tool for neuroimaging in clinical trials. It provides for the measurement of the diffusion matrix  (3 $\times$ 3 positive definite matrice) of molecules of water in tiny voxels in the white matter of the brain. When there are no barriers, the diffusion matrix is isotropic, and in the presence of structural barriers in the brain white matter due to axon (nerve fiber) bundles and their myelin sheaths (electrically insulating layers) the diffusion is anisotropic, and DTI can be used to measure the anisotropic diffusion tensor. When a trauma occurs, due to an injury or a disease, this highly organized structure is disrupted and anisotropy decreases. Large scale DTI based studies have been used to investigate autism, schizophrenia, Parkinson's disease and Alzheimer's disease. The geometry of  $\sym^{+}(p)$ for general $p$ is now described in the following.


Let $A\in \sym^{+}(p)$ which  follows a distribution $Q$.  We first introduce the Euclidean metric of $A$, which is given by $\|A\|^2=\trace(A)^2$.  Since $\sym^{+}(p)$ is an open convex subset of $\sym(p)$, the space of all $p\times p$ symmetric matrices, the mean  of $Q$  with respect to the Euclidean distance is given by the  Euclidean mean
\begin{equation}
\mu_E=\int A Q(dA).
\end{equation}

 Another  important metric for $\sym^{+}(p)$  is the \emph{$\log$-Euclidean metric} \citep{Arsigny06}. Let $J\equiv \log:  \sym^{+}(p)\rightarrow \sym(p)$ be the inverse of the \emph{exponential map} $B\rightarrow e^B$,  $\sym(p)\rightarrow \sym^{+}(p)$, which is the matrix exponential of $B$. Then $J$ is a diffeomorphism. The $\log$ Euclidean distance is given by
\begin{equation}
\rho_{LE}(A_1,A_2)=\| \log(A_1)-\log(A_2)\|.
\end{equation}
Note that $J$ is an embedding on  $\sym^{+}(p)$ onto $\sym (p)$ and, in fact, it is an equivariant embedding under the group action of $\text{GL}(p, \R)$ , the  general linear group of $p\times p$ non-singular matrices. The extrinsic mean of $Q$ under $J$ is given by
\begin{equation}
\mu_{E,J}=\exp(\int (\log (A))Q(dA)).
\end{equation}

We  apply Theorem \ref{th-clt} to  sample Fr\'echet means under both the Euclidean and $\log$-Euclidean distances. In particular, we  consider a diffusion tensor imaging (DTI) data set  consisting of 46 subjects with 28 HIV+ subjects and 18 healthy controls.   Diffusion tensors were extracted along the fiber tract of the splenium of the corpus callosum. The DTI data for all the subjects are  registered in the same \emph{atlas space} based on arc lengths, with 75 features  obtained along the fiber tract of each subject. This data set has been studied in a regression setting in \cite{Yuan2012}. 
On the other hand, we carry out two sample tests between the control group and the HIV+ group for each of the 75 sample points along the fiber tract. Therefore, 75 tests are performed  in total. Two types of tests are carried out based on the Euclidean distance and the log-Euclidean distance.  


The simple Bonferroni procedure for testing $H_0$ yields a $p$-value equal to 75 times the smallest $p$-value which is of order $10^{-7}$. To identify sites with significant differences, the 75 $p$-values are ordered from the smallest to the largest with a \emph{false discovery rate} of $\alpha=0.05$, $58$ sites are found to yield significant differences using the Euclidean distance, and 47 using the $\log$-Euclidean distance (see \cite{citeulike:1042553}).

 \textbf{ Example 9.6.} (Glaucoma detection- a match pair problem in $3D$). Our final example is on the $3D$ reflection similarity shape space $R\Sigma_3^k$. To detect shape changes due to glaucoma, data were collected on twelve mature rhesus monkeys.
One of the eyes of each monkey was treated with a chemical agent to temporarily increase the intraocular pressure (IOP). The increase in IOP is known to be a cause of glaucoma. The other eye was left untreated. Measurements were made of five landmarks in each eye, suggested by medical professionals. The data may be found in \cite{rabi05}.  The match pair test based on \eqref{eq-5.7}  yielded an observed value 36.29 of the asymptotic chisquare statistic with degrees of freedom 8. The corresponding $p$-value is $1.55\times10^{-5}$ (See \cite{rabibook}, Chapter 9). This provides a strong justification for using shape change of the inner eye as a diagnostic tool to detect the onset of glaucoma. An earlier computation using a different nonparametric procedure in \cite{rabi05} provided a $p$-value .058. Also see Bandulasiri et al. (2009) where a $95\%$ confidence region is obtained for the difference between the extrinsic size-and-shape reelection shapes between the treated and untreated eyes.

\appendix

\section{Appendix on Riemannian Manifolds}

\numberwithin{equation}{section}
\setcounter{equation}{0}

\label{sec-app}

Often the manifold $M$ in applications has a natural Riemannian metric tensor $g$. That is, it is given an inner product $\langle, \rangle_p$ on the tangent space $T_pM$ at $p$, which is smoothly defined.  In local coordinates in $U_p$ given by $\psi_p(\cdot)=x= (x_1,\ldots,x_d) \in B_p$, the functions $ (g_{ij})(x) = \langle E_i,E_j\rangle_p$ , with $E_i= d\psi_p^{01} (\partial /\partial x_i)$ ($i,j =1,\ldots,d$), are smooth in $B_p$. This allows one to measure the length of a smooth arc $\gamma$ joining any two points $q$, $q' $in $U_p$, namely,  $\int_{[a,b]}|dx(t)/dt| dt$,
$\gamma(a) = q$,  $\gamma(b) = q',$  $x(t) = \psi_p\circ \gamma(t)$. Here $|dx(t)/dt|^2= \langle dx(t)/dt, dx(t)/dt\rangle_p$, with $dx(t)/dt$ expressed in the local frame $E_i $($i=1,\ldots, d$).  One may also write $dx(t)/dt$ as $d\gamma(t)/ dt$. Using the compatibility condition (ii) above one now defines the length of a smooth arc joining any two points in $M$. The \emph{geodesic distance} $\rho_g(p,q)$ between $p$ and $q$ is the minimum of lengths of all smooth arcs joining $p$ and $q$.  A standard parametrization of a curve is its arc length $s$: $s=\int_{[a,t]} |d\gamma(u)/du| du$. In this parametrization of curves, one has $|d\gamma(t)/dt| =1$.  We will adopt this so called \emph{unit speed} parametrization unless otherwise specified. The property of local minimization of arc lengths yields a first order condition on the velocity $d\gamma(t)/dt$ of the minimizing curve $\gamma$ at $t$:  the acceleration along $\gamma$ is zero at every parameter join $t$. If $M$ is a submanifold ((hyper) surface) of an Euclidean space $\R^N$, then the second derivative $d^2\gamma(t)/dt^2$  is well defined,  but in general does not belong to the tangent space of $M$ at $\gamma(t)$.  By \emph{`acceleration'} one means the orthogonal projection of the vector $d^2\gamma(t)/dt^2$  onto the tangent space of $M$ at $\gamma(t)$.  This projection is called the \emph{covariant derivative} of the velocity and denoted $(D/dt) d\gamma(t)/dt$. The ``zero acceleration" of a geodesic $\gamma$ means $(D/dt) d\gamma(t)/dt=0$.  On a general differentiable manifold, which is not given explicitly as a submanifold, there is no ``outside".  The proper extension of the above notion of covariant derivative by Levi-Civita, using a notion known as \emph{affine connection}, for all differentiable manifolds was a milestone in the development of differential geometry (See, e.g.,  \cite{docarmo} Chapter 2).

In local coordinates the equation for a geodesic is a second order ordinary differential equation. By the standard existence theorem for ordinary differential equations, a geodesic $\gamma$ is uniquely determined on a maximal interval $(a, b)$ ($-\infty \leq a <b<\infty$), given an initial point $\gamma(0) =p$ and an velocity $(d\gamma(t)/dt)_{t=0} = v$. According to a result of Hopf and Rinow (Do Carmo (1992), Chapter 7), the geodesics can be extended indefinitely, (i.e., $a =-\infty$  and $b= \infty$), i.e., it is \emph{geodesically complete}, if and only if  $(M,\rho_g)$ is a complete metric space; this in turn is equivalent to the topological condition \eqref{eq-compact}. In particular, all compact Riemannian manifolds are \emph{geodesically complete}. In most of the applications in this article $M$ is compact.

On a complete Riemannian manifold, a geodesic $\gamma(t) = \gamma(t;p,v)$, $t\geq 0$, in the direction $v$, is completely determined by an initial point $p= \gamma(0)$, and an initial velocity $v=(d\gamma(t)/dt)t=0$ .  A \emph{cut point} of $p$ of the geodesic $\gamma$ along $v$ is $\gamma(r(v); p,v)$, where $r(v)$ is the supremum of all $t_0$ such that $\gamma$ is distance minimizing between $p=\gamma(0)$ and $\gamma(t_0)$. The set of all cut points (along all $v$) is called the \emph{cut locus} of $p$, denoted $\text{Cut}(p)$. The geodesic distance $q\rightarrow \rho_g(p,q)$ may not be smooth at the cut locus $\text{Cut}(p)$, as Example \ref{ex-2.2ap} below shows.  Next, define the \emph{exponential function} $Exp_p: T_p(M) \rightarrow M:  Exp_p (v) = \gamma(1;p,v)$ the point in $M$ reached by the geodesic in time $t=1$, starting at $p$ with an initial velocity $v$. It is known that $Exp_p$ is a diffeomorphism on an open ball $B(0:r_0)$ of $T_p(M)$, of radius $r_0=r_0(p)<\infty $, onto $M\backslash \text{Cut}(p)$ (\cite{docarmo}, p. 271).  Here $r_0 = r_0(p)$ is the geodesic distance between $p$ and $\text{Cut}(p)$). The inverse map $Exp_p^{-1} :M\backslash \text{Cut}(p) \rightarrow Exp_p (B(0: r_0)$ is called the \emph{inverse exponential}, or the \emph{$\log$ map}, $\log_p$, at $p$. The quantity $\text{inj}(M) = \sup\{r_0(p); p \in M)\} $ is the \emph{injectivity radius} of $M$. The $\log_p$ map also provides the so called \emph{normal coordinates} for a neighborhood of $p$.

\begin{example}[Exponential and Log Maps on the Sphere $S^d$ ]
\label{ex-2.2ap}

 Consider the unit sphere $S^d= \{x\in\R^{d+1}: |x|^2 \sum_{j=1}^d  (x^{(j)})^2 = 1\}$.  Because $|\gamma (t)| =1$ $\forall t$  for a curve on $S^d$, the tangent space at $p$  may be identified as the set of vectors in $\R^{d+1}$ orthogonal to $p$, $T_p(S^d) =   \{v \in \R^{d+1}: pv'=0\}$. Here we write $p$, $v$, etc. as row vectors. The geodesics are the big circles, so that the point reached at time one by the geodesic from $p$ moving with an initial velocity $v$ is the point on the big circle lying on the plane spanned by $p$ and $v$ at an arc distance $|v|$, i.e.,
  \begin{align}
   Exp_p(v) = \cos(|v|)p + \sin(|v|)v/|v|,\; v \neq 0,  Exp_p(0) = p\;  (pv'=0).                        
  \end{align}
Also, the geodesic distance between $p$ and $q$ is the smaller of the lengths $|v|$ of the two arcs joining $p$ and $q$ on the big circle,
 \begin{align}
 \rho_g(p,q) =  \arc \cos pq'   \in [0, \pi]. 
 \end{align}
Note that the cut locus of $p$ is $\text{Cut}(p) = \{-p\}$, and the distance between $p$ and $-p$ is $\pi$, and   $\text{inj}(S^d) = \pi$.  Hence the map $\log_p(q)$ is defined on $S^d\backslash\{-p\} $ and obtained by solving for $v $ the equation $exp_p(v) =q$.  Now $|v|= \rho_g(p,q)$. Plugging this in (A.1) (and using (A.2)), one has
$$\sqrt{[1 -  (pq')^2]} v = [q -  (pq')p](\arc\cos pq'), (p\neq q),$$
 which yields
 \begin{align}
 \label{eq-50}
 \log_p(q) &\nonumber=  [q  -  (pq')p](\arc\cos p'q) /\sqrt{[1  - (pq')2]}\\
& =  [\rho_g(p,q)/ \sin \rho_g(p,q)] [q  - (pq')p],                       
 \end{align}
 for $ q\neq p$, $q\neq- p$,    $\log_p(p) =0$.
The map $ \log_p(q)$ is a diffeomorphism   on $S^d\backslash\{-p\}$ onto  $\{v\in T_pS^d: |v| < \pi\}$.
If one uses complex coordinates for $p$, $q$ then $pq'$ in the formula above are to be replaced by $Re(pq^*)$, etc. 
\end{example}

Most of the manifolds we consider in this article are of the form $M=N/\mathcal G$. Here $N$ is a complete Riemannian manifold with a metric tensor $\rho_{g,N}$ and $\mathcal G$  is a compact Lie group of isometries \emph{acting freely} on $N$, i.e., except for the identity map, no $g$ in $\mathcal G$ has a fixed point. This means that the orbit $O_p$ of a point $p$ under $\mathcal G$ is in one-one correspondence with $\mathcal G$. As a subset of $N$, $O_p$ is a submanifold of $N$ of dimension that of $\mathcal G$. Its tangent space $T_p O_p$ as a subspace of $T_pN$  is called the \emph{vertical subspace} of $T_pN$, denoted $V_p$. The subspace $H_p$ of $T_pN$ orthogonal to $V_p$ is the \emph{horizontal subspace}.   $M$ is then a Riemannian manifold with the metric tensor.  The projection $\pi: N\rightarrow M$ is a \emph{Riemannian submersion} The quotient $N/\mathcal G$ is then a Riemannian manifold.

The final important notion from geometry needed in this section is that of curvature.  First, consider a smooth unit speed curve $\gamma$ in $\RR^2$: $1= |\dot\gamma(t) |^2 =\langle \dot\gamma(t), \dot\gamma(t) \rangle$. Differentiation shows that $\ddot\gamma (t) = d^2\gamma(t)/dt^2$ is orthogonal to  $\dot\gamma(t)  : \ddot\gamma (t) = \kappa(t)N(t)$, where $N(t)$ is a unit vector orthogonal to $\dot\gamma(t)$ such that $(\dot\gamma(t), N(t))$ has the same orientation as $(\partial/\partial x_1, \partial/\partial x_2)$. Then $\kappa(t)$ is the curvature of $\gamma$ at the point $\gamma(t)$.  Next, at a point $p $ on a regular surface $S$ in $\RR^3$, let $N= N(p)$ denote a unit normal  to $S$ at $p$. A plane $\pi$ through $N(p$) intersects $S$ in a smooth curve.  Let $\kappa(. ; p,\pi)$ be the curvature of this curve.  As $\pi$ varies by degrees of rotation, the curvature varies. Let $\kappa_1$ be the maximum and $\kappa_2$ the minimum of these curvatures, and let $\kappa = \kappa_1\kappa_2$ . The \emph{Theorem Egregium} of Gauss says that $\kappa  = \kappa(p)$ ($p \in S$), the so-called \emph{Gaussian curvature,}  is intrinsic to the surface $S$, i.e., it is the same for all surfaces isometric to $S$ (See, e.g., Boothby (1986), pp. 377-381). We now consider, somewhat informally, the case of a Riemannian manifold $M$.  For $p \in M$ and $u,v \in T_p(M)$, consider the two dimensional subspace $\pi$ spanned by $u$,$v$. Consider the two-dimensional submanifold swept out by geodesics in M with initial velocities lying in this subspace. The Gaussian curvature of this submanifold, thought of locally as a surface, is called the \emph{ sectional curvature} of $M$ at $p$ for the section $\pi$.

\section*{Acknowledgement}
The authors acknowledge support for this article from NSF grants DMS 1406872, CAREER 1654579 and IIS 1663870.

\bibliography{reference-career}
\bibliographystyle{apalike}

\end{document}